\newtheorem{thm}{Theorem}[section]
\newtheorem{theorem}[thm]{Theorem}
\newtheorem{corollary}[thm]{Corollary}
\newtheorem{lemma}[thm]{Lemma}
\theoremstyle{definition}
\newtheorem{example}[thm]{Example}
\theoremstyle{remark}
\newtheorem{remark}[thm]{Remark}
\numberwithin{equation}{section}
\newcommand{\al}{\alpha}
\newcommand{\AC}{\mathcal{A}}
\newcommand{\BR}{\mathrm{B}}
\newcommand{\BC}{\mathcal{B}}
\newcommand{\CC}{\mathcal{C}}
\newcommand{\SC}{\mathcal{S}}
\newcommand{\de}{\mathrm{\delta}}
\newcommand{\DC}{\mathcal{D}}
\newcommand{\eps}{\varepsilon}
\newcommand{\fR}{\mathrm f}
\newcommand{\ga}{\gamma}
\newcommand{\la}{\lambda}
\newcommand{\La}{\Lambda}
\newcommand{\LC}{\mathcal{L}}
\newcommand{\nR}{\mathrm{n}}
\newcommand{\Om}{\Omega}
\newcommand{\RC}{\mathcal{R}}
\newcommand{\sR}{\mathrm{s}}
\newcommand{\TC}{\mathcal{T}}
\newcommand{\Si}{\Sigma}
\newcommand{\si}{\sigma}
\newcommand{\uR}{\mathrm{u}}
\newcommand{\zbf}{\mathbf{z}}
\newcommand{\C}{\mathbb{C}}
\newcommand{\R}{\mathbb{R}}
\newcommand{\Sbb}{\mathbb{S}}
\newcommand{\Arg}{\operatorname{Arg}}
\newcommand{\diag}{\operatorname{diag}}
\newcommand{\diam}{\operatorname{diam}}
\newcommand{\dist}{\operatorname{dist}}
\newcommand{\ess}{{\mathrm{ess}}}
\newcommand{\rank}{\operatorname{rank}}
\newcommand{\re}{\operatorname{Re}}
\newcommand{\im}{\operatorname{Im}}
\newcommand{\Tr}{\operatorname{Tr}}
\newcommand{\Spec}{\operatorname{Spec}}
\def\leq {\leqslant}
\def\geq {\geqslant}
\begin{document}

\title[On the relation between an operator and its self-commutator]
{On the relation between an operator\\ and its self-commutator}

\author{N. Filonov}
\address{Steklov Mathematical Institute, Fontanka 27, St Petersburg, Russia}
 \email{filonov@pdmi.ras.ru}

\author{Y. Safarov}
\address{Department of Mathematics, King's College London,
Strand, London, UK} \email{yuri.safarov@kcl.ac.uk}

\date{October 2009}

\thanks{The research was supported by the EPSRC grant GR/T25552/01.}
\subjclass{47L30, 47A11, 15A60}

\keywords{Operator algebras, almost commuting operators,
self-commutator, Brown--Douglas--Fillmore theorem, approximate
spectral projections}

\maketitle

Let $\,X\,$ and $\,Y\,$ be bounded self-adjoint operators on a
Hilbert space $\,H\,$. The paper deals with the following well known
problem: if the commutator $\,[X,Y\,]$ is small in an appropriate
sense, is there a pair of commuting operators $\,\tilde X\,$ and
$\,\tilde Y\,$ which are close to $\,X\,$ and $\,Y\,$? Note that for
general bounded operators $\,X\,$ and $\,Y\,$ it is not necessarily
true (see Subsection \ref{B8}).

For self-adjoint $\,X\,$ and $\,Y\,$, taking $\,A:=X+iY\,$, one can
reformulate the question as follows: if the self-commutator
$\,[A^*,A]\,$ is small, is there a normal operator $\,\tilde A\,$
close to $\,A\,$? There are some positive results in this direction.
Probably, the most famous is the Brown--Douglas--Fillmore theorem
\cite{BDF}.

\begin{theorem}\label{thm:bdf}
If $\,H\,$ is separable, $\,[A^*,A]\,$ is compact and the
corresponding to $\,A\,$ element of the Calkin algebra has trivial
index function then there is a normal operator $\,T\,$ such that
$\,A-T\,$ is compact.
\end{theorem}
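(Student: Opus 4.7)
The plan is to reformulate the statement as a classification question for $C^*$-algebra extensions and apply the main theorem of \cite{BDF}. Since $[A^*,A]$ is compact, the image $\pi(A)$ in the Calkin algebra $\QC(H) := \BR(H)/\KC(H)$ is normal; let $X := \Spec \pi(A) \subset \C$. By continuous functional calculus the $C^*$-subalgebra generated by $\pi(A)$ and the identity is canonically isomorphic to $C(X)$, and pulling back through $\pi$ yields a short exact sequence
\[
0 \to \KC(H) \to E \to C(X) \to 0
\]
which determines an element $[A]$ of the Brown--Douglas--Fillmore monoid $\mathrm{Ext}(X)$ of essential unital extensions of $C(X)$ by the compacts, modulo unitary equivalence.

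Next, I would recall that the index map sending $[A]$ to the function $\la \mapsto \ind(A-\la I)$ (defined for $\la\notin X$ and locally constant on $\C\setminus X$) is a well-defined homomorphism from $\mathrm{Ext}(X)$ to $\check H^1(X;\Z)$. The central result of \cite{BDF} asserts that this homomorphism is an isomorphism and that $\mathrm{Ext}(X)$ is a group; the neutral element is represented precisely by those extensions that arise from genuine normal operators with essential spectrum $X$. Under the hypothesis that the index function vanishes, $[A] = 0$, so $A$ differs from some normal $N$ (with $\Spec_\ess N = X$) by a unitary conjugation modulo compacts: $U^* A U - N \in \KC(H)$ for some unitary $U\in\BR(H)$. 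Setting $T := U N U^{*}$ then produces the required normal operator with $A - T$ compact.

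The substantive obstacle lies in the injectivity of the index map: an extension with vanishing index function must be equivalent to the trivial one. The standard route proceeds in three stages. First, one approximates $X$ by compacta $X_n$ whose complements in $\C$ have only finitely many components, using semicontinuity of the Fredholm index to reduce to the case where the invariant has a simple combinatorial description. Second, one constructs approximate spectral projections of $A$ subordinate to a fine partition of $X$; these are nearly orthogonal, nearly commute with $A$, and localize $A$ into almost-reducing finite-spectrum summands. Third, one applies Voiculescu's non-commutative Weyl--von Neumann theorem to absorb the resulting localized operator into a normal direct summand. The existence and asymptotic commutation of the approximate spectral projections, together with the coordination between this localization step and the absorption step, are where essentially all the difficulty resides; everything else is book-keeping around Alexander duality and the universality of the normal-operator extension.
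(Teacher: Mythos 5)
Your outline is a correct sketch of the classical proof of the Brown--Douglas--Fillmore theorem via the classification of essential extensions of $\,C(X)\,$ by the compacts (in the modern form that routes the injectivity of the index map through Voiculescu's theorem), but it is not the route this paper takes. Here the theorem is derived as a byproduct of the paper's own approximation machinery: Lemma \ref{lem:Bo1} identifies the vanishing of the index function with the condition {\bf(C)} that $\,A-\la I\in\overline{L_0^{-1}}\,$ for all $\,\la\,$; since $\,\BC(H)\,$ has real rank zero, Corollary \ref{cor:normal-1} then puts $\,A\,$ into the closure of $\,\BR(\|A\|)\cap L_\nR+J_{[A^*,A]}\cap L_\sR\,$, and because $\,[A^*,A]\,$ is compact the ideal $\,J_{[A^*,A]}\,$ sits inside $\,\CC(H)\,$, so $\,A\,$ is a norm limit of operators of the form ``normal plus compact''; finally Lemma \ref{lem:quasidiag} (deducible from Huaxin Lin's theorem, which the paper also recovers via \eqref{matrix-norm}) says that this set is norm closed, giving $\,A=T+K\,$ outright. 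The two arguments buy different things: the $\mathrm{Ext}$-theoretic route gives the full classification of essentially normal operators up to compact unitary equivalence and is conceptually transparent once Voiculescu's absorption theorem is in hand, whereas the paper's route avoids $\mathrm{Ext}$ and Voiculescu entirely, works in any unital $\,C^*$-algebra of real rank zero, and yields quantitative refinements (Theorem \ref{thm:bdf1} bounds $\,\|A-T\|\,$ in terms of $\,\|[A^*,A]\|\,$ and produces $\,T\,$ diagonal with $\,\|T\|\leq\|A\|\,$) that are not visible from the index-theoretic argument. Note also that while both arguments invoke ``approximate spectral projections,'' they play structurally different roles: in your sketch they localize $\,A\,$ so as to feed it into Voiculescu's theorem, while in the paper (Theorem \ref{thm:partition}) they directly produce finite-spectrum normal approximants inside $\,L\,$ with no absorption step.
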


\noindent Another is due to Huaxin Lin \cite{L3}.

\begin{theorem}\label{thm:huaxin}
There exists a continuous function $\,F:[0,\infty)\mapsto[0,1]\,$
vanishing at the origin such that the distance from $\,A\,$ to the
set of normal operators is estimated by $\,F(\|[A^*,A]\|)$ for all
finite rank operators $\,A\,$ with $\,\|A\|\leq1\,$.
\end{theorem}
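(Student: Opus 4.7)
My plan is to reduce the normality question for $A$ to an approximate commutation question for a pair of self-adjoint operators, and then prove the latter by a compactness/lifting argument.

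Writing $A = X + iY$ with $X = (A+A^*)/2$ and $Y = (A-A^*)/(2i)$ self-adjoint finite rank contractions, one has $\|[X,Y]\| = \tfrac12 \|[A^*, A]\|$; moreover $\tilde A := \tilde X + i \tilde Y$ is normal iff the self-adjoint pair $\tilde X, \tilde Y$ commutes. It therefore suffices to produce a continuous $G : [0, \infty) \to [0, 2]$ with $G(0) = 0$ such that every pair of finite rank self-adjoint contractions $X, Y$ admits commuting self-adjoint $\tilde X, \tilde Y$ with $\max\{\|X - \tilde X\|, \|Y - \tilde Y\|\} \leq G(\|[X, Y]\|)$.

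The heuristic construction is as follows. Partition $[-1, 1]^2$ into $N \times N$ squares $Q_{jk}$ and build approximate joint spectral projections $P_{jk}$ for $(X, Y)$ supported essentially on $Q_{jk}$, by composing smooth one-variable spectral cut-offs $f_j(X)$ and $g_k(Y)$. Since $X, Y$ nearly commute, the $P_{jk}$ are nearly pairwise orthogonal projections summing nearly to the identity, with defect controlled by $\|[X, Y]\|$ and the mesh size $1/N$. Perturbing them to a genuine resolution of the identity $\{\tilde P_{jk}\}$ and setting $\tilde X := \sum_{j, k} x_j \tilde P_{jk}$, $\tilde Y := \sum_{j, k} y_k \tilde P_{jk}$ (with $(x_j, y_k)$ the center of $Q_{jk}$) would give a commuting approximation, with total error controllable by $1/N$ plus the perturbation of the $P_{jk}$.

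The main obstacle is to make this last perturbation step quantitative without accumulating errors across the $\sim N^2$ projections; naive polar-decomposition tricks fail here. Finiteness of rank is essential: in the infinite rank setting of Theorem~\ref{thm:bdf}, a $K$-theoretic index can obstruct such a construction, but for matrices $K_1(M_k) = 0$ and no obstruction arises. A cleaner way to exploit this is a contradiction argument in an ultraproduct: if no $G$ existed, one could produce a commuting self-adjoint pair $(x, y)$ in $\prod_n M_{k_n} / \bigoplus_n M_{k_n}$, generating a commutative subalgebra $\cong C(K)$ for some compact $K \subset [-1, 1]^2$; the required uniformity would then follow from the \emph{weak semiprojectivity} of $C(K)$ relative to this ideal, i.e., the ability to lift the inclusion to a $\ast$-homomorphism into $\prod_n M_{k_n}$ and hence to commuting matrix approximants of $(X_n, Y_n)$. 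Establishing this lifting, due to Lin, is the deep step; granted it, continuity of $F$ at positive arguments is immediate from monotonicity.
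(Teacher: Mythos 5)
Your compactness idea is in the right spirit --- the paper's proof of Theorem~\ref{thm:main} also passes to a product of $C^*$-algebras and extracts a universal modulus by contradiction. But the proposal has a genuine gap at the decisive point: you attribute the lifting of the commuting pair $(x,y)$ from $\prod_n M_{k_n}/\bigoplus_n M_{k_n}$ to the ``weak semiprojectivity of $C(K)$,'' and concede that ``establishing this lifting, due to Lin, is the deep step.'' That makes the argument essentially circular, since the lifting statement is equivalent to the theorem being proved. It is also not a semiprojectivity phenomenon: $C(K)$ for a two-dimensional compact $K\subset\R^2$ is \emph{not} (weakly) semiprojective --- that property requires $\dim K\leq 1$ --- so the lifting must be established directly for this particular corona, and that is precisely the content to be supplied. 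Your sketch with approximate joint spectral projections built from $f_j(X)g_k(Y)$ does not close quantitatively, as you note yourself, and the claim that continuity of $F$ follows from monotonicity alone is also not right (a nondecreasing function can jump; the paper obtains continuity by defining $F$ in \eqref{h0} as an infimum of affine functions, hence concave).

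The paper fills the gap without any lifting. Theorem~\ref{thm:partition} constructs genuine resolutions of the identity in $L$ subordinate to an arbitrary finite open cover of $\si(A)$, for a normal $A$ satisfying condition~{\bf(C)} in a real rank zero algebra; Corollary~\ref{cor:FR2} deduces approximation by normals with finite spectra. The key Lemma~\ref{lem:xy} then passes to the quotient $L/\overline{J_{[X,Y]}}$, where $A=X+iY$ becomes normal and both real rank zero and condition~{\bf(C)} persist, picks a rapidly oscillating real function $f$ whose graph is an $\eps'$-net of the disc of radius $\|A\|+\eps$, approximates $\pi Y$ by $f(\pi X')$, and pulls the estimate back to $L$ using only that self-adjoint elements of the quotient lift to self-adjoint elements of $L$. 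Corollary~\ref{cor:normal-1} then gives $A\in\overline{\BR(\|A\|)\cap L_\nR+J_{[A^*,A]}\cap L_\sR}$, and the product-algebra argument of Theorem~\ref{thm:main} extracts the uniform $h(\eps)$ by applying Corollary~\ref{cor:normal-1} directly inside $\prod_\xi L_\xi$ --- no quotient, hence no lifting problem. The finite-matrix estimate \eqref{matrix-norm} that gives the stated theorem then drops out of Corollary~\ref{cor:main-2}.
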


A related question is whether an operator $\,A\,$ with small
self-commutator is close to a diagonal operator. Recall that an
operator $\,T\,$ on a separable Hilbert space is said to be diagonal
if it is represented by a diagonal matrix in some orthonormal basis.
Clearly, all diagonal operators are normal. The following result was
obtained in \cite{Be} and is usually referred to as the Weyl--von
Neumann--Berg theorem.

\begin{theorem}\label{thm:berg}
Let $\,A\,$ be a (not necessarily bounded) normal operator on a
separable Hilbert space. Then for each $\,\eps>0\,$ there exist a
diagonal operator $\,D_\eps\,$ and a compact operator $\,K_\eps\,$
with $\,\|K_\eps\|\leq\eps\,$ such that $\,A=D_\eps+K_\eps\,$.
\end{theorem}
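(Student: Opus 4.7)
The plan is to reduce to the bounded normal case and then, by a Weyl-type induction, produce an orthonormal basis in which the matrix of $A$ is almost diagonal with a Hilbert--Schmidt remainder of small norm.

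\textbf{Reduction to bounded normal operators.} Let $E$ denote the spectral measure of $A$ and set $H_n := E(\{\zeta \in \C : n-1 \leq |\zeta| < n\})\, H$, $n \geq 1$. Each $H_n$ reduces $A$ and $A|_{H_n}$ is bounded normal. Granted the bounded case, apply it on each nontrivial $H_n$ with tolerance $\eps\, 2^{-n}$ to obtain $A|_{H_n} = D_n + K_n$ with $D_n$ diagonal on $H_n$ and $\|K_n\| \leq \eps\, 2^{-n}$. Then $D_\eps := \bigoplus_n D_n$ is diagonal in the union of the bases of the $H_n$, while $K_\eps := \bigoplus_n K_n$, being a norm limit of direct sums of finitely many compact operators, is compact with $\|K_\eps\| \leq \eps$.

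\textbf{Bounded case.} Assume $\|A\| < \infty$ and $H$ is separable and infinite-dimensional. Fix $(\de_k)_{k \geq 1}$ with $\sum \de_k^2 < \eps^2$ and a countable dense sequence $(\phi_n) \subset H$. I build an orthonormal sequence $\{e_k\}$ inductively. At step $k$, set $M_k := \mathrm{span}(e_1,\dots,e_{k-1})^\perp$, partition the compact set $\si(A) \subset \C$ into finitely many Borel cells of diameter less than $\de_k$, and select one cell $\om$ for which $E(\om) H \not\subset \mathrm{span}(e_1,\dots,e_{k-1})$ (such $\om$ exists because $H$ is infinite-dimensional, so the intersection of the infinite-dimensional subspace $E(\om)H$ with the finite-codimensional $M_k$ is nonzero). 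Choose a unit vector $e_k \in E(\om) H \cap M_k$; then $\|A e_k - \mu_k e_k\| \leq \de_k$ for any $\mu_k \in \om$. By interleaving the construction so that at stage $k$ the distance from $\phi_k$ to $\mathrm{span}(e_1,\dots,e_k)$ is less than $1/k$, one guarantees that $\{e_k\}$ is an orthonormal basis of $H$. With $\mu_k := \langle A e_k, e_k \rangle$ and $D_\eps := \sum_k \mu_k \langle \cdot, e_k \rangle e_k$, the residuals $r_k := A e_k - \mu_k e_k$ are orthogonal to $e_k$ with $\|r_k\| \leq \de_k$; a Cauchy--Schwarz estimate applied to $(A - D_\eps)v = \sum c_k r_k$ for $v = \sum c_k e_k$ gives $\|A - D_\eps\|^2 \leq \sum_k \|r_k\|^2 < \eps^2$, and $A - D_\eps$ is Hilbert--Schmidt, hence compact.

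\textbf{Main obstacle.} The decisive step is producing, inside the prescribed finite-codimensional subspace $M_k$, a unit vector that is simultaneously an approximate eigenvector of both $\re A$ and $\im A$. For self-adjoint $A$ this is classical Weyl--von Neumann and uses one-dimensional spectral intervals; the novelty in the normal case is that one must work with the joint spectral measure of the commuting pair $(\re A, \im A)$, whose projections for arbitrarily small two-dimensional cells in $\C$ play the analogous role. Once the simultaneous approximate-eigenvector property in $M_k$ is secured via this joint spectral resolution, the remainder of the argument is the summability bookkeeping described above.
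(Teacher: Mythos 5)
The paper does not prove Theorem~\ref{thm:berg}; it records it with the reference \cite{Be}, so the proposal must stand on its own. The reduction to the bounded case and the final Cauchy--Schwarz/Hilbert--Schmidt estimate are correct. The genuine gap is the unjustified assertion that one can ``interleave the construction so that at stage $k$ the distance from $\phi_k$ to $\mathrm{span}(e_1,\dots,e_k)$ is less than $1/k$''. Your scheme forces $e_k\in E(\om)H\cap M_k$ for a small spectral cell $\om$, and nothing ties this choice to the residual $(I-P_{k-1})\phi_n$, where $P_{k-1}$ projects onto $\mathrm{span}(e_1,\dots,e_{k-1})$; for instance, if $A$ is multiplication by $x$ on $L^2[0,1]$ and the cells are always chosen near one endpoint, the $e_k$ stay supported there and never span $H$. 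The natural repair --- decompose $\psi_n=(I-P_{k-1})\phi_n=\sum_j E(\om_j)\psi_n$ and project a large piece back onto $M_k$ --- ruins the approximate-eigenvector property, because $E(\om_j)$ does not commute with $P_{k-1}$: the resulting error is of order $\|[A,P_{k-1}]\|$, controlled only by the accumulated sum $\sum_{j<k}\de_j$, which does not decay and cannot feed a square-summable estimate.

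This is exactly why the classical Weyl--von Neumann argument works in finite-dimensional blocks rather than one vector at a time: one first shows that any finite-dimensional $F\subset H$ lies in the range of a finite-rank projection $Q$ with $\|[A,Q]\|$ as small as desired (project onto $\mathrm{span}\{E(\om_j)f_i\}$ over an orthonormal basis $\{f_i\}$ of $F$ and a sufficiently fine partition $\{\om_j\}$ of $\si(A)$), then builds $Q_n\uparrow I$ with $\sum_n\|[A,Q_n]\|$ controlled, and finally splits $A$ along the slabs $Q_n-Q_{n-1}$. Even granted all this, the normal case has a further wrinkle your ``Main obstacle'' paragraph does not touch: the compressions $(Q_n-Q_{n-1})A(Q_n-Q_{n-1})$ of a normal operator are \emph{not} normal, so they cannot simply be diagonalized block by block; handling this is the actual content of Berg's theorem. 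The passage from one-dimensional to two-dimensional spectral cells, which your obstacle paragraph highlights as the novelty, is automatic for a normal operator's spectral measure and is not where the difficulty lies.
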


Our main result is Theorem \ref{thm:main}, which shows that a
bounded operator $\,A\,$ belongs to a certain set associated with
its self-commutator whenever $\,A-\la I\,$ can be approximated by
invertible operators for all $\,\la\in\C\,$. Theorem \ref{thm:main}
implies both the BDF and Huaxin Lin's theorems. Moreover, it allows
us to refine the former and to extend the latter to operators of
infinite rank and other norms (see Subsections \ref{S3.matrices} and
\ref{S3.bdf}, Corollary \ref{cor:main-2} and Remark
\ref{rem:normal-2}). In particular, we obtain
\begin{enumerate}
\item[(i)]
a quantitative version of Theorem \ref{thm:bdf}, which links the BDF
and Weyl--von Neumann--Berg theorems for bounded operators,  and
\item[(ii)] an analogue of Huaxin Lin's theorem for the Schatten norms of finite
matrices.
\end{enumerate}

Theorem \ref{thm:main} holds for any unital $C^*$-algebra $L$ of
real rank zero, but in the general case we need a slightly stronger
condition on $\,A\,$. Namely, we assume that $\,A-\la I\,$ belong to
the closure of the connected component of unity in the set of
invertible elements in $\,L\,$ for all $\,\la\in\C\,$.

Our proof of Theorem \ref{thm:main} uses the $\,C^*$-algebra
technique developed in \cite{FR1} and \cite{FR2} and cannot be
significantly simplified by assuming that $\,L\,$ is the
$\,C^*$-algebra of all bounded operators. One of the main
ingredients in the proof is the part of Corollary \ref{cor:FR2}
which says that a normal operator satisfying the above condition can
be approximated by normal operators with finite spectra. This
statement is contained in \cite[Theorem 3.2]{FR2}. The authors
indicated how it could be proved but did not present complete
arguments. Therefore, for reader's convenience, we give
operator-theoretic proofs of this and relevant results. More
precisely, we deduce Corollary \ref{cor:FR2} from Theorem
\ref{thm:partition}, which seems to be new and may be of independent
interest.

\section{Notation and auxiliary results}\label{S1}

\subsection{Notation and definitions}\label{S1.1}
Let $\,H\,$ be a complex Hilbert space (not necessarily separable).
Throughout the paper,
\begin{enumerate}
\item[$\bullet$]
$\BC(H)\,$ is the $C^*$-algebra of all bounded operators in $\,H\,$;
\item[$\bullet$]
$\si(A)\,$ denotes the spectrum of $\,A\in\BC(H)\,$;
\item[$\bullet$]
$\,L\,$ is a unital $\,C^*$-algebra represented on the Hilbert space
$\,H\,$, so that $\,L\subseteq \BC(H)\,$.
\end{enumerate}
Recall that, by the Gelfand--Naimark theorem, such a representation
exists for any unital $\,C^*$-algebra $\,L\,$.

Every operator $\,A\in \BC(H)\,$ admits the polar decomposition
$\,A=V|A|\,$, where $\,|A|\,$ is the self-adjoint operator
$\,\sqrt{A^*A}\,$ and $\,V\,$ is an isometric operator such that
$\,VH=\overline{AH}\,$ and $\,\ker V=\ker A=\ker|A|\,$. If $\,A\,$
is normal then $\,V|A|=|A|V\,$.

\begin{remark}\label{rem:l-1}
If $\,A\in L\,$ then $\,f(|A|)\in L\,$ for any continuous function
$\,f\,$. If, in addition, $\,A\,$ is invertible then
$\,V=A|A|^{-1}\,$ is a unitary element of $\,L\,$ because
$\,|A|^{-1}\,$ can be approximated by continuous functions of
$\,|A|\,$. In particular, this implies that $\,A^{-1}\in L\,$. If
$\,A\in L\,$ is not invertible then the isometric operator $\,V\,$
in its polar representation does not have to belong to $\,L\,$.
\end{remark}

\begin{remark}\label{rem:projections}
The spectral projections of a normal operator $\,A\in L\,$ may not
lie in $\,L\,$. However, the spectral projection corresponding to a
connected component of $\,\si(A)\,$ belongs to $\,L\,$, since it can
be written as a continuous function of $\,A\,$.
\end{remark}

Further on
\begin{enumerate}
\item[$\bullet$]
$L^{-1}\,$ is the set of invertible operators in $L$;
\item[$\bullet$]
$L_0^{-1}\,$ denotes the connected component of $L^{-1}\,$
containing the identity operator;
\item[$\bullet$]
$\,L_\nR\,$, $\,L_\uR\,$, and $\,L_\sR\,$ are the sets of normal,
unitary and self-adjoint operators in $L$ respectively;
\item[$\bullet$]
$\,L_\fR\,$ is the set of operators $\,A\in L\,$ with finite
spectra;
\item[$\bullet$]
if $\,M\subset L\,$ then $\,\overline M\,$ denotes the norm closure
of the set $\,M\,$ in $\,L\,$.
\end{enumerate}
Clearly, the sets $\,L^{-1}\,$ and $\,L_0^{-1}\,$ are open in
$\,L\,$, and the sets $\,L_\nR\,$, $\,L_\uR\,$ and $\,L_\sR\,$ are
closed.

One says that
\begin{enumerate}
\item[$\bullet$]
$\,L\,$ has {\it real rank zero\/} if $\,\overline{L^{-1}\cap
L_\sR}=L_\sR\,$.
\end{enumerate}
The concept of real rank of a $\,C^*$-algebra was introduced in
\cite{BP}. A unital $\,C^*$-algebra $\,L\,$ has real rank zero if
and only if any self-adjoint operator $\,A\in L\,$ is the norm limit
of a sequence of self-adjoint operators from $\,L\,$ with finite
spectra (see Corollary \ref{cor:FR1} and Subsection \ref{B1}).

\begin{remark}\label{rem:vonNeumann}
Note that any self-adjoint operator $\,A\in\BC(H)\,$ is approximated
in the norm topology by invertible self-adjoint operators of the
form $\,f(A)\,$, where $\,f\,$ are suitable real-valued Borel
functions. Therefore, all von Neumann algebras (in particular,
$\,\BC(H)\,$ and the algebra of finite $\,m\times m$-matrices) have
real rank zero.
\end{remark}

\begin{example}\label{ex:real-rank}
The minimal unital $\,C^*$-algebra $\,L_A\,$ containing a given
bounded self-adjoint operator $\,A\,$ consists of normal operators
$\,f(A)\,$, where $\,f\,$ are continuous complex-valued functions on
$\,\si(A)\,$. If there is an open interval $\,(a,b)\subset\si(A)\,$
then $\,A-\frac{a+b}2\,I\not\in\overline{L^{-1}\cap L_\sR}\,$ and,
consequently, $\,L_A\,$ is not of real rank zero.
\end{example}

Our main results hold for $\,C^*$-algebras of real rank zero and
operators $\,A\in L\,$ satisfying the following condition
\begin{enumerate}
\item[{\bf(C)}]
$\,A-\la I\in\overline{L_0^{-1}}\,$ for all $\,\la\in\C\,$.
\end{enumerate}

\subsection{Auxiliary lemmas}\label{S1.2}
We shall need the following simple lemmas.

\begin{lemma}\label{lem:l0-1}
Let $\,\widehat{L}\,$ be the subset of the direct product
$\,\C\times L\,$ which consists of all pairs $\,(\la,A)\,$ such that
$\,\la\not\in\si(A)\,$. If $\,A_0-\la_0I\in L_0^{-1}\,$ for some
$\,(\la_0,A_0)\in\widehat{L}\,$ then $\,A-\la
I\in\overline{L_0^{-1}}\,$ for all $\,(\la,A)\,$ lying in the
closure of the connected component of $\,\widehat{L}\,$ that
contains $\,(\la_0,A_0)\,$.
\end{lemma}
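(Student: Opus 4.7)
The plan is to reduce the statement to a routine exercise in point-set topology by introducing the continuous evaluation map
\[
\Phi:\widehat L\to L^{-1},\qquad \Phi(\la,A):=A-\la I,
\]
which is well defined because $(\la,A)\in\widehat L$ means precisely $A-\la I\in L^{-1}$. The map is norm-continuous in both variables, so the image of any connected subset of $\widehat L$ is connected in $L^{-1}$.

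Let $\widehat U$ denote the connected component of $\widehat L$ containing $(\la_0,A_0)$. Then $\Phi(\widehat U)$ is a connected subset of $L^{-1}$ that contains $\Phi(\la_0,A_0)=A_0-\la_0I$. Since $A_0-\la_0I\in L_0^{-1}$ by hypothesis, and since $L_0^{-1}$ is by definition a connected component of $L^{-1}$, the standard fact that connected components partition a space (so any connected set meeting a component is contained in it) gives $\Phi(\widehat U)\subseteq L_0^{-1}$.

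Finally, take any $(\la,A)$ in the closure of $\widehat U$ in $\C\times L$. Choose a sequence $(\la_n,A_n)\in\widehat U$ with $\la_n\to\la$ in $\C$ and $A_n\to A$ in norm. Then $A_n-\la_nI\in L_0^{-1}$ for every $n$ and $A_n-\la_nI\to A-\la I$ in $L$, whence $A-\la I\in\overline{L_0^{-1}}$, as required. Note that the limit point $A-\la I$ need not itself be invertible (one may well have $\la\in\si(A)$ in the closure), but the assertion only demands membership in the norm closure of $L_0^{-1}$, so this causes no difficulty.

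There is no real obstacle in this argument; the only point that deserves a moment's attention is the passage from connectedness of $\widehat U$ to containment of $\Phi(\widehat U)$ in $L_0^{-1}$, which rests solely on the elementary fact that connected components are equivalence classes. Everything else is continuity of $\Phi$ and closedness of the norm closure under taking limits.
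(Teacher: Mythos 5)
Your argument is correct, and it is a minor simplification of the paper's route. The paper first observes that $\widehat{L}$ is open (so that its connected component through $(\la_0,A_0)$ is path-connected), then takes an explicit path $(\la_t,A_t)$ and maps it to a path $A_t-\la_tI$ in $L^{-1}$ starting in $L_0^{-1}$, concluding that the endpoint lies in $L_0^{-1}$. You instead use the map $\Phi(\la,A)=A-\la I$ directly: a continuous image of a connected set is connected, and a connected subset of $L^{-1}$ that meets the component $L_0^{-1}$ must lie in it. This avoids the (correct but unnecessary) appeal to path-connectedness of open connected sets in a Banach space. Both arguments then finish identically by taking limits into the closure of the connected component. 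The only point worth being explicit about is the one you already flag: for $(\la,A)$ in the closure one may have $\la\in\si(A)$, so $A-\la I$ need not be invertible; but the conclusion only asserts membership in $\overline{L_0^{-1}}$, and passing to the limit is all that is needed.
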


\begin{proof}
Let $\,\widehat{L}_0\,$ be the connected component of
$\,\widehat{L}\,$ containing $\,(\la_0,A_0)\,$. Since the set
$\,\widehat{L}\,$ is open, $\,\widehat{L}_0\,$ is path-connected. If
$\,(\la,A)\in\widehat{L}_0\,$ and
$\,(\la_t,A_t)\subset\widehat{L}_0\,$ is a path in
$\,\widehat{L}_0\,$ from $\,(\la_0,A_0)\,$ to $\,(\la,A)\,$ then
$\,A_t-\la_tI\in L^{-1}\,$ for all $\,t\,$, which implies that
$\,A-\la I\in L_0^{-1}\,$. If $\,(\la,A)\,$ belongs to the closure
of $\,\widehat{L}_0\,$ then $\,A-\la I\,$ can be approximated by
operators $\,A_n-\la_nI\in L_0^{-1}\,$ with
$\,(\la_n,A_n)\in\widehat{L}_0\,$.
\end{proof}

\begin{remark}\label{rem:l0-1}
If $\,|\mu|>\|A\|\,$ then $\,A-\mu I\in L_0^{-1}\,$ because
$\,[0,1]\ni t\mapsto tA-\mu I\,$ is a path in $\,L^{-1}\,$ from
$\,-\mu I\in L_0^{-1}\,$ to $\,A-\mu I\,$. Therefore, Lemma
\ref{lem:l0-1} implies that $\,A\,$ satisfies the condition {\bf(C)}
whenever $\,\C\setminus\si(A)\,$ is a dense connected subset of
$\,\C\,$. In particular, {\bf(C)} is fulfilled for all compact
operators $\,A\in L\,$, all $\,A\in L_\sR\cup L_\fR\,$, and all
unitary operators $\,A\in L_\uR\,$ whose spectra do not coincide
with the whole unit circle.
\end{remark}

\begin{lemma}\label{lem:l0-2}
Let $\,A\in L^{-1}\,$ and $\,A=U|A|\,$. Then $\,A\in L_0^{-1}\,$ if
and only if $\,U\in L_0^{-1}\cap L_\uR\,$.
\end{lemma}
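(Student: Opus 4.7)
The plan is to connect $A$ to $U$ by a straight-line path through invertible elements and argue that both endpoints therefore lie in the same connected component of $L^{-1}$.

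First I would use Remark \ref{rem:l-1}: since $A\in L^{-1}$, the isometry $U$ in the polar decomposition is in fact unitary and equals $A|A|^{-1}\in L$, so $U\in L_\uR$ automatically. This already disposes of the $L_\uR$ part of the claim; the whole content is that $A\in L_0^{-1}\iff U\in L_0^{-1}$.

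Next I would introduce the explicit path
\begin{equation*}
\gamma(t)\;:=\;U\bigl((1-t)|A|+tI\bigr),\qquad t\in[0,1],
\end{equation*}
which obviously satisfies $\gamma(0)=A$ and $\gamma(1)=U$ and lies in $L$ by continuous functional calculus applied to the self-adjoint element $|A|$. The point is that $\gamma(t)\in L^{-1}$ for every $t\in[0,1]$: the self-adjoint element $(1-t)|A|+tI$ is bounded below by $\min(t,\|A^{-1}\|^{-1})\,I>0$ (using that $|A|\geq \|A^{-1}\|^{-1}I$ since $A$ is invertible), hence invertible; and $U$ is unitary, so $\gamma(t)$ is the product of two invertibles.

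Finally, since $L^{-1}$ is open in $L$, its connected component $L_0^{-1}$ containing $I$ is also its path-component containing $I$. The continuous path $\gamma$ shows that $A$ and $U$ lie in the same path-component of $L^{-1}$, so $A\in L_0^{-1}$ if and only if $U\in L_0^{-1}$, which together with $U\in L_\uR$ is exactly the stated equivalence. There is no real obstacle here; the only thing to watch is that $(1-t)|A|+tI$ stays strictly positive on the whole interval, which follows from invertibility of $|A|$ at $t=0$ and from the $tI$ summand dominating for $t>0$.
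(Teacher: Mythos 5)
Your proof is correct and uses exactly the same path $t\mapsto U\bigl((1-t)|A|+tI\bigr)$ as the paper's one-line argument, merely spelling out the invertibility check and the reduction to $U\in L_\uR$ via Remark~\ref{rem:l-1}. No gap; this is the paper's proof with the details filled in.
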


\begin{proof}
$\,[0,1]\ni t\mapsto U(tI+(1-t)|A|)\,$ is a path in $\,L^{-1}\,$
from $\,A\,$ to $\,U\,$.
\end{proof}

\begin{lemma}\label{lem:l0-3}
Assume that $\,L\,$ has real rank zero. Then $\,U\in L_\uR\cap
L_0^{-1}\,$ if and only if for every $\,\eps\in(0,1)\,$ there exist
unitary operators $\,U_\eps,W_\eps\in L_\uR\,$ such that
$\,U=W_\eps\,U_\eps\,$, $\,-1\not\in\si(U_\eps)\,$ and
$\,\|W_\eps-I\|\leq\eps\,$.
\end{lemma}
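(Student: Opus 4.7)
The plan is to prove the two implications separately. The direction $(\Leftarrow)$ is a direct consequence of continuous functional calculus and the openness of $L^{-1}$, while $(\Rightarrow)$ is the substantive direction requiring real rank zero.

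For $(\Leftarrow)$, assuming the factorization $U=W_\eps U_\eps$ exists for some $\eps\in(0,1)$, I would show that both $W_\eps$ and $U_\eps$ lie in $L_0^{-1}$; then $U=W_\eps U_\eps\in L_0^{-1}$ because $L_0^{-1}$ is a subgroup of $L^{-1}$, and $U$ is unitary as a product of unitaries. Since $-1\notin\sigma(U_\eps)\subset\{|\la|=1\}$, continuous functional calculus using the principal branch of $\log$ yields $H_\eps:=-i\log U_\eps\in L_\sR$ with $\|H_\eps\|<\pi$, and $t\mapsto e^{itH_\eps}$ is a path in $L^{-1}\cap L_\uR$ connecting $I$ to $U_\eps$. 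For $W_\eps$, the bound $\|W_\eps-I\|\leq\eps<1$ makes the straight-line path $t\mapsto I+t(W_\eps-I)$ stay in $L^{-1}$ by Neumann series.

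For $(\Rightarrow)$, given $U\in L_\uR\cap L_0^{-1}$ and $\eps\in(0,1)$, I would first reduce the problem: it suffices to find a unitary $U_\eps\in L_\uR$ with $-1\notin\sigma(U_\eps)$ and $\|U-U_\eps\|\leq\eps$, since then $W_\eps:=UU_\eps^*$ is automatically unitary with $\|W_\eps-I\|=\|U-U_\eps\|\leq\eps$. If $-1\notin\sigma(U)$, take $U_\eps=U$. If $-1$ is an isolated point of $\sigma(U)$, the spectral projection $P$ at $-1$ lies in $L$ by Remark \ref{rem:projections}, and $U_\eps:=-e^{i\alpha}P+U(I-P)$ with $\alpha=\eps/2$ works since $\|U-U_\eps\|=|e^{i\alpha}-1|\leq\alpha$. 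In the remaining case, where $-1$ is a non-isolated point of $\sigma(U)$, the plan is to exploit real rank zero to approximate the missing spectral projection of $U$ at $-1$: the self-adjoint operator $X:=|U+I|^2=2I+U+U^*\in L_\sR$ has $0\in\sigma(X)\subset[0,4]$; by real rank zero one approximates $X$ in norm by $X'\in L_\sR$ of finite spectrum whose spectrum has a prescribed gap above $0$. The spectral projection $P\in L$ of $X'$ onto $\{0\}$ then satisfies $\|XP\|\leq\|X-X'\|$ and $(I-P)X(I-P)\geq\gamma(I-P)$ for a positive gap-dependent $\gamma$; in turn $\|(U+I)P\|\leq\|XP\|^{1/2}$ is small, while $(U+I)$ is bounded below on $(I-P)H$.

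The hard part will be turning this approximate projection $P$ into a bona fide unitary modification of $U$. Since $P$ is a spectral projection of $X'$ and not of $X$, it need not commute with $U$, so the naive block-diagonal substitution on $\operatorname{ran}(P)$ fails to be unitary. My plan is to exploit the unitarity of $U$ fully: alongside $\|(U+I)P\|\leq\|XP\|^{1/2}$, the dual bound $\|P(U+I)\|\leq\|XP\|^{1/2}$ (obtained from $\|P(U+I)\|^2=\|(U^*+I)P\|^2=\|PXP\|\leq\|XP\|$) forces each of $\|U_{11}+I_P\|$, $\|U_{12}\|$ and $\|U_{21}\|$ to be small in the block decomposition $H=PH\oplus(I-P)H$. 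A unitary perturbation of the form $U_\eps:=e^{i\alpha P}U$, or a more refined variant chosen to cancel the cross-terms, should then yield $-1\notin\sigma(U_\eps)$: one checks this by writing $U_\eps+I=e^{i\alpha P}(U+e^{-i\alpha P})$ and showing the second factor is bounded below via the $2\times 2$ block analysis of $(U+e^{-i\alpha P})^*(U+e^{-i\alpha P})$, whose $(1,1)$ block contributes approximately $|e^{-i\alpha}-1|^2 I_P$ on $\operatorname{ran}(P)$ and whose $(2,2)$ block is $\geq\gamma I_{I-P}$. The delicate point is balancing the three parameters $\|X-X'\|$, $\gamma$, and $\alpha\leq\eps$ so that the diagonal lower bounds dominate the off-diagonal norms; if the direct choice $U_\eps=e^{i\alpha P}U$ does not suffice, a closer-to-unitary correction supported near $PH$ may be needed, but the structure of the argument remains the same.
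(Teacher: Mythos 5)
Your ($\Leftarrow$) argument is correct and essentially matches the paper's. The gap is in ($\Rightarrow$), and it is structural rather than a matter of sharpening estimates: from the reduction onward your construction never uses the hypothesis $U\in L_0^{-1}$. It uses only that $L$ has real rank zero and that $U$ is unitary, so, if it closed, it would prove that \emph{every} unitary in a real rank zero $C^*$-algebra can be norm-approximated by unitaries whose spectrum misses $-1$. That is false: take $L$ to be the Calkin algebra (real rank zero, being a quotient of $\BC(H)$) and $U$ the image of the unilateral shift, which is unitary, has spectrum equal to the whole unit circle, and lies outside $L_0^{-1}$ because its Fredholm index is $-1$. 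If your construction applied to this $U$, then by your own ($\Leftarrow$) step $U$ would lie in $L_0^{-1}$, a contradiction. So the hypothesis $U\in L_0^{-1}$ must be exploited, and your scheme has no place for it in the case where $-1$ is a non-isolated spectral point (the isolated case is fine, but there $U\in L_0^{-1}$ holds automatically).

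The imbalance in the block estimate is visible and is not ``delicate.'' With $\delta:=\|X-X'\|$ and $P$ the spectral projection of $X'$ onto its low-lying part with a gap $\eta$ above, one can at best achieve $\eta\sim\delta$ because $0\in\si(X)$ forces $\dist(0,\si(X'))\leq\delta$; hence the $(2,2)$ block $(I-P)X(I-P)$ is bounded below only by $O(\delta)$. The off-diagonal block of $(U+e^{-i\alpha P})^*(U+e^{-i\alpha P})$ equals $PX(I-P)+(e^{i\alpha}-1)PU(I-P)$ and has norm of order $|e^{i\alpha}-1|\,\delta^{1/2}$, while the $(1,1)$ block contributes $\approx|e^{i\alpha}-1|^2$. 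Invertibility of a positive $2\times2$ block operator requires the square of the off-diagonal norm to be dominated by the product of the diagonal lower bounds; here both are $\approx|e^{i\alpha}-1|^2\delta$, and tracking constants shows the off-diagonal wins, as it must by the counterexample above. The paper's proof goes by a different route that does use the hypothesis: take a path in $L_0^{-1}$ from $I$ to $U$, normalize it to a unitary path $\tilde Z(t)=Z(t)|Z(t)|^{-1}$, split $[0,1]$ into short steps to write $U=V_m\cdots V_1$ with each $\|V_j-I\|<1$ (so $-1\notin\si(V_j)$), and then prove, via Cayley transforms $V_j=(iI-Y_j)^{-1}(iI+Y_j)$ and real-rank-zero perturbation of the $Y_j$, that a product of two unitaries each avoiding $-1$ in spectrum can be approximated by a single such unitary; an induction over the $V_j$ finishes the proof.
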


\begin{proof}
Recall that the point $-1$ does not belong to the spectrum of
$\,U\in L_\uR\,$ if and only if $\,U\,$ is the Cayley transform of a
self-adjoint operator $\,X\,$, that is, $\,U=(iI-X)^{-1}(iI+X)\,$
where $\,X=i\,(U+I)^{-1}(U-I)\in L_\sR\,$. For every such an
operator $\,U\,$, the principal branch of the argument $\,\Arg\,$ is
continuous in a neighbourhood of $\,\si(U)\,$, so that $\,\Arg U\in
L_\sR\,$ and $\,\exp(it\,\Arg U)\,$ is a path in $\,L_\uR\cap
L_0^{-1}\,$ from $\,I\,$ to $\,U\,$.

Assume first that $\,U=W_\eps\,U_\eps\,$ where $\,U_\eps\,$ and
$\,W_\eps\,$ satisfy the above conditions. Then $\,U\in L_\uR\,$,
$\,-1\not\in\si(W_\eps)\,$ and $\,\exp(it\,\Arg
W_\eps)\,\exp(it\,\Arg U_\eps)\,$ is a path in $\,L_\uR\cap
L_0^{-1}\,$ from $\,I\,$ to $\,U\,$. Thus $\,U\in L_\uR\cap
L_0^{-1}\,$.

Assume now that $\,U\in L_\uR\cap L_0^{-1}\,$. Then there exists a
path $\,Z(t)\subset L_0^{-1}\,$ from $\,I\,$ to $\,U\,$. The
``normalized'' path $\,\tilde Z(t)=Z(t)|Z(t)|^{-1}\,$ lies in
$\,L_\uR\cap L_0^{-1}\,$ and also joins $\,I\,$ and $\,U\,$. Let us
choose a finite collection of points $\,0=t_0<t_1<t_2<\dots<t_m=1\,$
such that $\,\|\tilde Z(t_j)-\tilde Z(t_{j-1})\|<1\,$ and define
$\,V_j:=\tilde Z(t_j)\,\tilde Z^{-1}(t_{j-1})\,$. Then
$\,U=V_m\,V_{m-1}\,\ldots V_1\,$ and $\,\|V_j-I\|<1\,$, so that
$\,-1\not\in\si(V_j)\,$ for all $\,j\,$.

Let $\,V=(iI-Y)^{-1}(iI+Y)\,$ and $\,\tilde V=(iI-\tilde
Y)^{-1}(iI+\tilde Y)\,$ where $\,Y,\tilde Y\in L_\sR\,$. Since
$\,L\,$ has real rank zero, for each $\,\de>0\,$ we can find
$\,Y_\de\in L_\sR\cap L^{-1}\,$ such that $\,\|Y-Y_\de\|<\de\,$, and
then $\,\tilde Y_\de\in L_\sR\,$ such that $\,\|\tilde Y-\tilde
Y_\de\|<\de\,$ and $\,\tilde Y_\de-Y_\de^{-1}\in L^{-1}\,$. If
$\,V_\de=(iI-Y_\de)^{-1}(iI+Y_\de)\,$ and $\,\tilde V_\de=(iI-\tilde
Y_\de)^{-1}(iI+\tilde Y_\de)\,$ then $\,\|V_\de\tilde V_\de-V\tilde
V\|\to0\,$ as $\,\de\to0\,$ because the function
$\,t\mapsto(iI-t)^{-1}(iI+t)\,$ is continuous, and
$\,-1\not\in\si(V_\de\tilde V_\de)\,$ because
\begin{multline*}
(iI-Y_\de)\,(V_\de\tilde V_\de+I)\,(iI-\tilde Y_\de)\ =\
(iI+Y_\de)\,(iI+\tilde Y_\de)
+(iI-Y_\de)(iI-\tilde Y_\de)\\
=\ 2\,(Y_\de\tilde Y_\de-I)=2\,Y_\de\,(\tilde Y_\de-Y_\de ^{-1})\
\in\ L^{-1}\,.
\end{multline*}

Thus we see that the composition of two unitary operators whose
spectra do not contain $\,-1\,$ can be approximated by unitary
operators with the same property. By induction, the same is true for
the composition of any finite collection of unitary operators. In
particular, there exists $\,U_\eps\in L_\uR\cap L_0^{-1}\,$ such
that $\,-1\not\in\si(U_\eps)\,$ and $\,\|U-U_\eps\|<\eps\,$. Taking
$\,W_\eps:=U\,U_\eps^{-1}\,$, we obtain the required representation
of $\,U\,$.
\end{proof}

\begin{lemma}\label{lem:uniform}
Assume that $\,L\,$ has real rank zero. Then
\begin{enumerate}
\item[(1)]
for every $\,A\in\overline{L_0^{-1}}\,$ and every $\,\de>0\,$ there
exists an operator $\,S_\de\in L_0^{-1}\,$ such that
$\,\|S_\de^{-1}\|\leq\de^{-1}\,$ and $\,\|A-S_\de\|\leq2\de\,$.
\item[(2)]
If $\,A\in L_\sR\,$ then one can find a self-adjoint operator
$\,S_\de\in L^{-1}\,$ satisfying the above conditions.
\item[(3)]
For every $\,S\in L_0^{-1}\,$ there exists a continuous path
$\,Z:[0,1]\mapsto L_0^{-1}\,$ such that $\,Z(0)=I\,$, $\,Z(1)=S\,$,
$\,\|Z(t)^{-1}\|\leq\max\left\{1\,,\,\|S^{-1}\|\right\}\,$ for all
$\,t\in[0,1]\,$ and
\begin{equation}\label{path-1}
\|Z(t)-Z(r)\|\ \leq\
|t-r|\,(1+2\pi)\,\max\left\{1\,,\,\|S\|\right\}\,,\qquad\forall
t,r\in[0,1]\,.
\end{equation}
\end{enumerate}
\end{lemma}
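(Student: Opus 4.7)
The plan is to handle all three parts uniformly: approximate the target by an invertible operator, then perturb via functional calculus so that the spectrum of $\,|B|\,$ (respectively $\,B\,$) is pushed away from $\,0\,$ by exactly $\,\de\,$. Specifically, for part (1), I would pick $\,B\in L_0^{-1}\,$ with $\,\|A-B\|\leq\de\,$ (possible since $\,A\in\overline{L_0^{-1}}\,$), take the polar decomposition $\,B=U|B|\,$, and set $\,S_\de:=U\,g(|B|)\,$ with $\,g(t):=\max(t,\de)\,$. By Remark~\ref{rem:l-1}, $\,U=B|B|^{-1}\in L_\uR\,$ and $\,g(|B|)\in L\,$; by Lemma~\ref{lem:l0-2}, $\,U\in L_0^{-1}\,$. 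Since $\,g(|B|)\geq\de I\,$, the segment $\,tI+(1-t)g(|B|)\,$ lies in $\,L^{-1}\,$, so $\,g(|B|)\in L_0^{-1}\,$ and hence $\,S_\de\in L_0^{-1}\,$. Functional calculus then yields $\,\|S_\de^{-1}\|\leq\de^{-1}\,$ and $\,\|B-S_\de\|=\||B|-g(|B|)\|\leq\de\,$, whence $\,\|A-S_\de\|\leq 2\de\,$.

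Part (2) is the self-adjoint analogue: real rank zero gives $\,B\in L_\sR\cap L^{-1}\,$ with $\,\|A-B\|\leq\de\,$, and since $\,0\not\in\si(B)\,$ the function $\,h(t):=\sign(t)\,\max(|t|,\de)\,$ is continuous on $\,\si(B)\,$; then $\,S_\de:=h(B)\in L_\sR\cap L^{-1}\,$ satisfies $\,\|S_\de^{-1}\|\leq\de^{-1}\,$ and $\,\|S_\de-B\|\leq\de\,$.

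For part (3), I would factor $\,S=U|S|\,$ so that $\,U\in L_\uR\cap L_0^{-1}\,$ by Lemma~\ref{lem:l0-2}, and build $\,Z\,$ by concatenating a unitary stage with a radial stage. The radial path $\,P(s):=(1-s)I+s|S|\,$ satisfies $\,\|P(s)^{-1}\|\leq\max\{1,\|S^{-1}\|\}\,$ and has Lipschitz constant $\,\||S|-I\|\leq M:=\max\{1,\|S\|\}\,$. For the unitary part, Lemma~\ref{lem:l0-3} (applied with $\,\eps=1/2\,$, say) writes $\,U=W_\eps U_\eps\,$ with $\,-1\not\in\si(U_\eps)\,$ and $\,\|W_\eps-I\|\leq 1/2\,$, so $\,-1\not\in\si(W_\eps)\,$ as well; then $\,\Arg U_\eps\,$ and $\,\Arg W_\eps\,$ are self-adjoint elements of $\,L\,$ with norm $\,<\pi\,$, and concatenating the exponential paths $\,\exp(is\,\Arg U_\eps)\,$ (from $\,I\,$ to $\,U_\eps\,$) and $\,\exp(is\,\Arg W_\eps)\,U_\eps\,$ (from $\,U_\eps\,$ to $\,U\,$) gives, after an arc-length reparametrization, a path $\,W\colon[0,1]\to L_\uR\cap L_0^{-1}\,$ from $\,I\,$ to $\,U\,$ with Lipschitz constant $\,<2\pi\,$. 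Setting $\,\al:=2\pi/(2\pi+M)\,$ and
\[
Z(t):=\begin{cases}W(t/\al),&t\in[0,\al],\\ U\,P\bigl((t-\al)/(1-\al)\bigr),&t\in[\al,1],\end{cases}
\]
produces the required continuous path in $\,L_0^{-1}\,$ from $\,I\,$ to $\,S\,$: the inverse bound reduces to the two stage bounds, and both stage Lipschitz constants equal $\,2\pi+M\leq(1+2\pi)M=(1+2\pi)\max\{1,\|S\|\}\,$ (using $\,M\geq1\,$).

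The delicate step is the unitary path in (3): the bare fact $\,U\in L_\uR\cap L_0^{-1}\,$ only supplies \emph{some} continuous path through unitaries, with no a priori length bound. Lemma~\ref{lem:l0-3} is precisely what makes the explicit constant $\,2\pi\,$ available, by splitting $\,U\,$ into two factors both of whose principal arguments are well defined and bounded by $\,\pi\,$.
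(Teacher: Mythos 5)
Your proof is correct and follows essentially the same strategy as the paper's: parts (1) and (2) use the identical functional-calculus modification (your $\max(t,\de)$ is the paper's $(t-\de)_++\de$), and part (3) relies on the same splitting of $\,U\,$ via Lemma \ref{lem:l0-3}. The only cosmetic difference is in (3): you concatenate the unitary and radial stages in series with an arc-length reparametrization, whereas the paper runs the three paths $\,Z_1(t)Z_2(t)Z_3(t)\,$ simultaneously as a product --- both yield the Lipschitz bound $\,(1+2\pi)\max\{1,\|S\|\}\,$.
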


\begin{proof}\

(1) Let us choose an arbitrary operator $\,B\in L_0^{-1}\,$ such
that $\,\|A-B\|\leq\de\,$, and let $\,B=V|B|\,$ be its polar
decomposition. Then, by Lemma \ref{lem:l0-2}, we have $\,V\in
L_0^{-1}\cap L_\uR\,$ and $\,S_\de:=V((|B|-\de I)_++\de I)\in
L_0^{-1}\,$. Obviously, $\,\|S_\de^{-1}\|=\|((|B|-\de I)_++\de
I)^{-1}\|\leq\de^{-1}\,$ and $\,\|B-S_\de\|=\|(|B|-\de I)_++\de
I-|B|\,\|\leq\de\,$, so that $\,\|A-S_\de\|\leq2\de\,$.

\smallskip
(2) If $\,A\in L_\sR\,$ then there exists an operator $\,B=V|B|\in
L_0^{-1}\cap L_\sR\,$ such that $\,\|A-B\|\leq\de\,$. As in (1), we
can take $\,S_\de:=V((|B|-\de I)_++\de I)\in L_\sR\,$.

\smallskip
(3) Let $\,S:=U|S|\,$ be the polar representation of $\,S\,$. By
Lemma \ref{lem:l0-2}, $\,U\in L_0^{-1}\cap L_\uR\,$. Therefore
$\,A:=W_\eps\,U_\eps\,|S|\,$, where $\,W_\eps\,$ and $\,U_\eps\,$
are unitary operators satisfying the conditions of Lemma
\ref{lem:l0-3}.

Let $\,Z_1(t):=\exp(it\Arg W_\eps)\,$, $\,Z_2(t):=\exp(it\Arg
U_\eps)\,$ and $\,Z_3(t):=t|A_\de|+(1-t)I\,$, where $\,t\in[0,1]\,$.
Each $\,Z_j(t)\,$ is a path in $\,L_0^{-1}\,$, and so is
$\,Z(t):=Z_1(t)Z_2(t)Z_3(t)\,$. Obviously, $\,Z(0)=I\,$,
$\,Z(1)=S\,$ and
$$
\|Z(t)^{-1}\|\ =\ \|(t|S|+(1-t)I)^{-1}\|\ =\
\left(t\|S^{-1}\|^{-1}+(1-t)\right)^{-1}\ \leq\
\max\left\{1\,,\,\|S^{-1}\|\right\}\,.
$$

One can easily see that
$\,\|Z_3(t)-Z_3(r)\|\leq|t-r|\,\max\left\{1\,,\,\|S\|\right\}\,$,
$\,\|Z_3(t)\|\leq\max\left\{1\,,\,\|S\|\right\}\,$ and
$\,\|Z_1(t)\|=\|Z_2(t)\|=1\,$. Since
$\,|e^{it\theta}-e^{ir\theta}|\leq\pi\,|t-r|\,$ for all
$\,r,t\in\R\,$ and $\,\theta\in(-\pi,\pi)\,$, we also have
$\,\|Z_j(t)-Z_j(r)\|\leq\pi\,|t-r|\,$ for $\,j=1,2\,$. These
inequalities imply \eqref{path-1}.
\end{proof}

\section{Main results}\label{S2}

\subsection{Resolution of the identity}\label{S2.1}
The following theorem will be proved in Appendix \ref{A}. Roughly
speaking, it says that a normal operator $\,A\in L_\nR\,$ satisfying
the condition {\bf(C)} has a resolution of the identity in $\,L\,$
associated with any finite open cover of $\,\si(A)\,$.

\begin{theorem}\label{thm:partition}
Assume that $\,L\,$ has real rank zero. Let $\,A\in L_\nR\,$, and
let $\,\{\Om_j\}_{j=1}^m\,$ be a finite open cover of $\,\si(A)\,$.
If $\,A\,$ satisfies the condition {\bf(C)} then there exists a
family of mutually orthogonal projections $\,P_j\in L\,$ such that
\begin{equation}\label{projections}
\sum_{j=1}^mP_j=I\quad\text{and}\quad P_jH\subset\Pi_{\Om_j}H\
\text{for all}\ j=1,\ldots,m\,,
\end{equation}
where $\,\Pi_{\Om_j}\,$ are the spectral projections of $\,A\,$
corresponding to the sets $\,\Om_j\,$.
\end{theorem}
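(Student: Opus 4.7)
The plan is to prove Theorem \ref{thm:partition} by first establishing the case $m=2$, which carries the essential analytic content, and then bootstrapping to general $m$ by iteration on a corner subalgebra.

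For the base case $m=2$, set $K_1:=\si(A)\setminus\Om_2$ and $K_2:=\si(A)\setminus\Om_1$; these are disjoint compact subsets of $\si(A)$. Pick a continuous $\psi:\C\to[-\tfrac12,\tfrac12]$ with $\psi|_{K_1}=\tfrac12$ and $\psi|_{K_2}=-\tfrac12$, and form $B:=\psi(A)\in L_\sR$. If $B$ happened to be invertible in $L$, then $P:=\tfrac12(I+B|B|^{-1})$ would lie in $L$ by Remark \ref{rem:l-1} and meet the requirements: it would fix $\Pi_{K_1}H$ (where the sign of $B$ is $+I$) and annihilate $\Pi_{K_2}H$ (where the sign is $-I$). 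The task therefore reduces to producing such a $P$ when $0\in\si(B)$.

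The main obstacle is this latter case. Real rank zero (Lemma \ref{lem:uniform}(2)) supplies self-adjoint invertible approximants $B_n\to B$ in $L$ and hence projections $P_n:=\tfrac12(I+B_n|B_n|^{-1})\in L$; but a naive approximation need not produce projections respecting the $A$-spectral decomposition in the limit. My plan is to exploit condition {\bf(C)} on $A$ --- via the path-lifting content of Lemmas \ref{lem:l0-1} and \ref{lem:uniform}(3) --- to choose the $B_n$ coherently, so that each remains close to $\pm\tfrac12 I$ on $\Pi_{K_1}H$ and $\Pi_{K_2}H$ respectively. The $P_n$ will then converge in norm to a projection $P\in L$ with $P\Pi_{K_1}=\Pi_{K_1}$ and $\Pi_{K_2}P=0$, as required. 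This coherent choice, which is what {\bf(C)} provides beyond the mere density of invertibles granted by real rank zero alone, is the delicate step, and is where the $C^*$-algebraic machinery of \cite{FR1,FR2} enters.

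For $m\geq 3$, apply the $m=2$ result to the two-set cover $\{\Om_1,\,\Om_2\cup\cdots\cup\Om_m\}$ to obtain $P_1\in L$, then recurse on the compression $A':=(I-P_1)A(I-P_1)$ in the corner algebra $(I-P_1)L(I-P_1)$, which inherits real rank zero. One must verify that $A'$ is normal in the corner (equivalent to $P_1$ commuting with $A$, which should follow from the fact that the $m=2$ construction above builds $P_1$ out of a functional calculus of $A$ perturbed via {\bf(C)}) and that $A'$ inherits condition {\bf(C)}, which Lemma \ref{lem:l0-1} handles. These verifications are technical but routine next to the core $m=2$ construction.
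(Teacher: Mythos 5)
There are two genuine gaps in the proposal, one in the $m=2$ step and one in the induction, and together they show that this route cannot be completed without being rebuilt from scratch.

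\textbf{The $m=2$ step.} Sign functions $P_n=\chi_{(0,\infty)}(B_n)$ of invertible self-adjoint approximants $B_n\to B$ do not, even in the limit, produce a projection with $P\geq\Pi_{K_1}$ and $P\perp\Pi_{K_2}$. The inclusions you need are \emph{exact}, and for $P_n\Pi_{K_1}=\Pi_{K_1}$ to hold it is not enough that $B_n$ be ``close to $\frac12 I$'' on $\Pi_{K_1}H$ (that is automatic, since $B_n\to B$ and $B\Pi_{K_1}=\frac12\Pi_{K_1}$); you would need $B_n$ to leave $\Pi_{K_1}H$ invariant, i.e.\ to commute with $\Pi_{K_1}$, and neither real rank zero nor {\bf(C)} supplies that. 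Already for $2\times2$ matrices, adding a tiny off-diagonal perturbation to a diagonal $B$ tilts the positive eigenprojection strictly off $\Pi_{K_1}$. Moreover {\bf(C)} is a hypothesis on the \emph{normal} operator $A$, whereas $B=\psi(A)$ is self-adjoint and automatically satisfies the self-adjoint analogue {\bf(C$_\sR$)} of Subsection~\ref{B2}; so invoking {\bf(C)} to control the $B_n$ is misdirected. (The $m=2$ case is in fact provable — it is the statement that real rank zero gives an interpolating projection $p$ with $\chi_{\{1/2\}}(B)\leq p\leq\chi_{(-1/2,1/2]}(B)$, a consequence of the HP property of real-rank-zero algebras — but it does not go through sign functions of generic invertible approximants, and it does not use {\bf(C)} at all.)

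\textbf{The induction.} Recursing on the compression $A'=(I-P_1)A(I-P_1)$ requires $A'$ to be normal in the corner, which you reduce to $P_1$ commuting with $A$. But a projection that commutes with a normal operator lies in the von~Neumann algebra generated by that operator, i.e.\ $P_1$ would have to be an actual spectral projection of $A$. Requiring $P_1\in L$ to \emph{also} be a spectral projection of $A$ is precisely what is unavailable in general (Remark~\ref{rem:projections}); if it were available, the whole theorem would be immediate. So the ``routine'' verification that $P_1$ commutes with $A$ is not a verification — it is false exactly in the cases where the theorem has content. Since Corollary~\ref{cor:FR2} shows that {\bf(C)} is genuinely necessary for the conclusion, and the $m=2$ step above needs no {\bf(C)}, the need for {\bf(C)} must enter in passing from $m=2$ to larger $m$; your induction scheme leaves no room for it. The paper's own induction avoids the trap by never compressing $A$: Lemmas~\ref{lem:a1} and~\ref{lem:a2} successively replace $A$ by normal operators $A^{(1)},A^{(2)},A^{(3)}\in L_\nR$ (still satisfying {\bf(C)}, with controlled relation to the original spectral projections) whose spectra are disconnected near $\partial\Omega_m$, so that $P_m$ is a genuine clopen spectral projection of $A^{(3)}$ lying in $L$; the recursion then proceeds on yet another modified normal operator $A_z$, not on a corner of $A$.
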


\begin{remark}\label{rem:von-Neumann}
The operators $\,P_j\,$ can be thought of as approximate spectral
projections of $\,A\,$. If $\,L\,$ is a von Neumann algebra then the
spectral projections of $\,A\,$ belong to $\,L\,$ and one can simply
take $\,P_j=\Pi_{\Om'_j}\,$, where $\,\{\Om'_j\}_{j=1}^m\,$ is an
arbitrary collection of mutually disjoint subsets
$\,\Om'_j\subset\Om_j\,$ covering $\,\si(A)\,$. However, even in
this situation Theorem \ref{thm:partition} may be useful, since the
projections $\,P_j\,$ constructed in the proof continuously depend
on $\,A\,$ in the norm topology.
\end{remark}

The following simple lemma shows how Theorem \ref{thm:partition} can
be applied for approximation purposes.

\begin{lemma}\label{lem:partition}
Let $\,A\in L_\nR\,$, and let $\,\{\Om_j\}_{j=1}^m\,$ be a finite
open cover of $\,\si(A)\,$ whose multiplicity does not exceed
$\,k\,$. If there exist mutually orthogonal projections $\,P_j\,$
satisfying \eqref{projections} then
\begin{equation}\label{partition}
\|A-\sum_{j=1}^mz_j\,P_j\|\ \leq\ \sqrt k\,\max_j(\diam\,\Om_j)
\end{equation}
for any collection of points $\,z_j\in\Om_j\,$.
\end{lemma}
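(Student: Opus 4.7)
The plan is to replace the non-orthogonal spectral projections $\,\Pi_{\Om_j}\,$ by an orthogonal refinement indexed by which subset of the cover catches each point of $\,\si(A)\,$. For each non-empty $\,I\subseteq\{1,\ldots,m\}\,$ put
$$
\Si_I\ :=\ \bigl\{w\in\si(A):\{j:w\in\Om_j\}=I\bigr\}\,,
$$
so that by the multiplicity hypothesis only sets $\,I\,$ with $\,|I|\leq k\,$ are relevant. The spectral projections $\,Q_I:=\Pi_{\Si_I}\,$ of $\,A\,$ are mutually orthogonal and sum to $\,I\,$; moreover $\,Q_I\Pi_{\Om_j}=Q_I\,$ when $\,j\in I\,$ and $\,Q_I\Pi_{\Om_j}=0\,$ when $\,j\notin I\,$. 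The crucial consequence of the hypothesis $\,P_jH\subset\Pi_{\Om_j}H\,$, that is $\,\Pi_{\Om_j}P_j=P_j\,$, is therefore $\,Q_IP_j=0\,$ for $\,j\notin I\,$.

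Fix a unit vector $\,u\in H\,$ and set $\,u_j:=P_ju\,$, so $\,\sum_j\|u_j\|^2=1\,$ by the mutual orthogonality of the $\,P_j\,$. I would then rewrite the error as $\,(A-\sum_jz_jP_j)u=\sum_j(A-z_jI)u_j\,$ and apply $\,Q_I\,$ to both sides. Since $\,Q_I\,$ commutes with $\,A\,$ and annihilates $\,u_j\,$ for $\,j\notin I\,$, the result is a sum of at most $\,k\,$ terms,
$$
Q_I\Bigl(A-\sum_jz_jP_j\Bigr)u\ =\ \sum_{j\in I}(A-z_jI)\,Q_Iu_j\,.
$$
For $\,j\in I\,$ the range of $\,Q_I\,$ lies in $\,\Pi_{\Om_j}H\,$ and $\,z_j\in\Om_j\,$, so the functional calculus applied to the normal operator $\,A\,$ gives $\,\|(A-z_jI)Q_Iu_j\|\leq(\diam\Om_j)\|Q_Iu_j\|\leq D\|Q_Iu_j\|\,$ with $\,D:=\max_j\diam\Om_j\,$. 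The triangle and Cauchy--Schwarz inequalities combined with $\,|I|\leq k\,$ then yield
$$
\Bigl\|Q_I\Bigl(A-\sum_jz_jP_j\Bigr)u\Bigr\|^2\ \leq\ D^2\,k\sum_{j\in I}\|Q_Iu_j\|^2\,.
$$

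To finish I would sum over $\,I\,$ using Pythagoras for the orthogonal family $\,\{Q_I\}\,$, extend the inner sum to all $\,j\,$ (legitimate because $\,Q_Iu_j=0\,$ for $\,j\notin I\,$), swap the order of summation, and apply Pythagoras again, once for $\,u_j=\sum_IQ_Iu_j\,$ and once for $\,u=\sum_ju_j\,$, obtaining
$$
\Bigl\|\Bigl(A-\sum_jz_jP_j\Bigr)u\Bigr\|^2\ \leq\ D^2k\sum_j\sum_I\|Q_Iu_j\|^2\ =\ D^2k\sum_j\|u_j\|^2\ =\ D^2k\,.
$$
Taking the supremum over unit $\,u\,$ yields \eqref{partition}. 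There is no genuine obstacle; the only conceptual step is identifying the refinement $\,\{\Si_I\}\,$, which converts the combinatorial hypothesis $\,|I|\leq k\,$ into the honest factor $\,\sqrt k\,$ through Cauchy--Schwarz while keeping the underlying spectral decomposition orthogonal.
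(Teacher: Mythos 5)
Your proof is correct, but it takes a genuinely different and more elaborate route than the paper's. You decompose on the right, writing $\bigl(A-\sum_jz_jP_j\bigr)u=\sum_j(A-z_jI)P_ju$; the pieces here are not mutually orthogonal, so you restore orthogonality by refining the cover into the disjoint level sets $\Si_I$, passing to the spectral projections $Q_I$, and recovering the factor $\sqrt k$ from $|I|\leq k$ via Cauchy--Schwarz. The paper instead decomposes on the left, using $\sum_jP_j=I$ to write $Au-\sum_jz_jP_ju=\sum_jP_j(A-z_jI)u$: since the $P_j$ are mutually orthogonal these pieces are already orthogonal, so $\|Au-\sum_jz_jP_ju\|^2=\sum_j\|P_j(A-z_jI)u\|^2$ immediately. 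Then the hypothesis $P_j=P_j\Pi_{\Om_j}$ gives $\|P_j(A-z_jI)u\|\leq\|\Pi_{\Om_j}(A-z_jI)u\|=\|(A-z_jI)\Pi_{\Om_j}u\|\leq(\diam\Om_j)\|\Pi_{\Om_j}u\|$, and the multiplicity hypothesis is invoked through the operator inequality $\sum_j\Pi_{\Om_j}\leq kI$, giving $\sum_j\|\Pi_{\Om_j}u\|^2\leq k\|u\|^2$. Both arguments hinge on the same two facts (the spectral diameter bound and the multiplicity bound), but the paper's left-handed decomposition keeps the pieces orthogonal from the start, so no refinement, no $Q_I$, and no Cauchy--Schwarz are needed; it is a three-line computation. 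Your version works and is illuminating in showing how the multiplicity hypothesis can be exploited combinatorially, but it is substantially longer for the same constant.
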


\begin{proof}
If $\,z_j\in\Om_j\,$ then
\begin{multline*}
\|Au-\sum_{j=1}^mz_j\,P_ju\|^2\ =\
\sum_{j=1}^m\|P_jAu-z_j\,P_ju\|^2\ \leq\
\sum_{j=1}^m\|\Pi_{\Om_j}(Au-z_ju)\|^2\\
=\ \sum_{j=1}^m\|(A-z_jI)\,\Pi_{\Om_j}u\|^2\ \leq\
\sum_{j=1}^m\|\Pi_{\Om_j}u\|^2\,(\diam\,\Om_j)^2\ \leq\
k\,\|u\|^2\,\max_j(\diam\,\Om_j)^2
\end{multline*}
for all $\,u\in H\,$. Taking the supremum over $\,u\,$, we obtain
\eqref{partition}.
\end{proof}

Theorem \ref{partition} and Lemma \ref{lem:partition} imply the
following corollaries.

\begin{corollary}\label{cor:FR1}
The following statements are equivalent.
\begin{enumerate}
\item[(1)]
A $\,C^*$-algebra $\,L\,$ has real rank zero.
\item[(2)]
Every self-adjoint operator $\,A\in L_\sR\,$ has approximate
spectral projections in the sense of Theorem
{\rm\ref{thm:partition}}, associated with any finite open cover of
its spectrum.
\item[(3)] $\,L_\sR=\overline{L_\fR\cap L_\sR}\,$.
\end{enumerate}
\end{corollary}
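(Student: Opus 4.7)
The plan is to establish the cycle $(1)\Rightarrow(2)\Rightarrow(3)\Rightarrow(1)$, using Theorem \ref{thm:partition} and Lemma \ref{lem:partition} as the main inputs together with one small perturbation argument. No single step looks hard; the only care needed is in choosing the cover in $(2)\Rightarrow(3)$ so that the resulting finite-spectrum approximant is self-adjoint and controlled in norm, and in $(3)\Rightarrow(1)$ in perturbing a finite-spectrum self-adjoint operator to make it invertible.

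For $(1)\Rightarrow(2)$, I would simply observe that a self-adjoint $A\in L_\sR$ has $\si(A)\subset\R$, so $\C\setminus\si(A)$ is a dense connected subset of $\C$. By Remark \ref{rem:l0-1}, $A$ satisfies condition \textbf{(C)}, and hence Theorem \ref{thm:partition} supplies the required approximate spectral projections for any finite open cover of $\si(A)$. This step is essentially immediate once Theorem \ref{thm:partition} is in hand.

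For $(2)\Rightarrow(3)$, fix $A\in L_\sR$ and $\eps>0$. Since $\si(A)$ is a compact subset of $\R$, I would choose a finite cover $\{\Om_j\}_{j=1}^m$ of $\si(A)$ by open discs in $\C$ whose centres lie on $\R$, of diameter less than $\eps$, arranged so that each pair of consecutive discs overlaps only with its neighbour, giving multiplicity at most $2$. Applying (2) yields mutually orthogonal projections $P_j\in L$ satisfying \eqref{projections}. Picking real points $z_j\in\Om_j\cap\R$ and setting $B:=\sum_{j=1}^m z_j\,P_j$ produces a self-adjoint element of $L$ with finite spectrum, and by Lemma \ref{lem:partition} we have $\|A-B\|\leq\sqrt{2}\,\eps$. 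Since $\eps$ is arbitrary, $A\in\overline{L_\fR\cap L_\sR}$.

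For $(3)\Rightarrow(1)$, given $A\in L_\sR$ and $\eps>0$, by (3) choose $B\in L_\fR\cap L_\sR$ with $\|A-B\|<\eps/2$. Because $\si(B)\subset\R$ is finite, there exists $\de\in\R$ with $|\de|<\eps/2$ and $\de\notin\si(B)$; then $B-\de I\in L^{-1}\cap L_\sR$ and $\|A-(B-\de I)\|<\eps$. Hence $L_\sR\subset\overline{L^{-1}\cap L_\sR}$, which (combined with the trivial reverse inclusion) is precisely the definition of real rank zero. The main ``obstacle'', such as it is, lies in $(2)\Rightarrow(3)$: one must verify that a cover of multiplicity $2$ can indeed be chosen, and that picking real centres keeps the approximant self-adjoint — both points are straightforward but need to be noted explicitly.
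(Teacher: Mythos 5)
Your proof is correct and follows essentially the same route as the paper: Remark \ref{rem:l0-1} plus Theorem \ref{thm:partition} for $(1)\Rightarrow(2)$, Lemma \ref{lem:partition} with a small-diameter cover and real interpolation points for $(2)\Rightarrow(3)$, and a small perturbation of a finite-spectrum self-adjoint element for $(3)\Rightarrow(1)$. The only cosmetic differences are that the paper uses a multiplicity-four cover by squares (to handle Corollary \ref{cor:FR2} simultaneously) where you use a multiplicity-two cover tailored to $\si(A)\subset\R$, and in $(3)\Rightarrow(1)$ the paper perturbs the eigenvalues individually via Remark \ref{rem:projections} while you subtract a scalar multiple of $I$ — both achieve the same thing.
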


\begin{corollary}\label{cor:FR2}
Assume that $\,L\,$ has real rank zero. Then for every normal
operator $\,A\in L_\nR\,$ the following statements are equivalent.
\begin{enumerate}
\item[(1)]
The operator $\,A\,$ satisfies the condition {\bf(C)}.
\item[(2)]
The operator $\,A\,$ has approximate spectral projections in the
sense of Theorem {\rm\ref{thm:partition}}, associated with any
finite open cover of its spectrum.
\item[(3)] $\,A\in\overline{L_\fR\cap L_\nR}\,$.
\end{enumerate}
\end{corollary}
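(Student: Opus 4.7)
My plan is to establish the equivalence by closing the loop $(1) \Rightarrow (2) \Rightarrow (3) \Rightarrow (1)$, with each step following readily from a result already in hand. The implication $(1) \Rightarrow (2)$ requires nothing beyond applying Theorem \ref{thm:partition} to the normal operator $A$, which by assumption satisfies {\bf(C)}.

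For $(2) \Rightarrow (3)$, the strategy is to manufacture, for every $\eps > 0$, a finite-spectrum normal element within $O(\eps)$ of $A$. Given $\eps > 0$, I would choose a finite open cover $\{\Om_j\}_{j=1}^m$ of the compact planar set $\si(A)$ by sets of diameter at most $\eps$ whose multiplicity is controlled by a fixed constant $k$ independent of $\eps$; this is a routine exercise in planar covering geometry (e.g.\ place open disks of radius $\eps/2$ at the points of a sufficiently fine square lattice, or invoke the fact that $\R^2$ has covering dimension $2$). Applying (2) to this cover produces mutually orthogonal projections $\{P_j\} \subset L$ satisfying \eqref{projections}. Picking any $z_j \in \Om_j$, the operator $A_\eps := \sum_{j=1}^m z_j P_j$ is normal (a direct computation gives $A_\eps A_\eps^* = A_\eps^* A_\eps = \sum_j |z_j|^2 P_j$) and has finite spectrum contained in $\{z_1,\ldots,z_m\}$, hence lies in $L_\fR \cap L_\nR$. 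Lemma \ref{lem:partition} then yields $\|A - A_\eps\| \leq \sqrt{k}\,\eps$, and letting $\eps \to 0$ gives $A \in \overline{L_\fR \cap L_\nR}$.

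For the closing implication $(3) \Rightarrow (1)$, I would approximate $A$ in norm by a sequence $A_n \in L_\fR \cap L_\nR$. By Remark \ref{rem:l0-1}, every element of $L_\fR$ automatically satisfies {\bf(C)}, so $A_n - \la I \in \overline{L_0^{-1}}$ for every $n$ and every $\la \in \C$. Since $\overline{L_0^{-1}}$ is closed by definition and $A_n - \la I \to A - \la I$ in norm, the limit also lies in $\overline{L_0^{-1}}$, giving $A - \la I \in \overline{L_0^{-1}}$ for each $\la$, which is exactly {\bf(C)}.

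The whole deduction is essentially mechanical once Theorem \ref{thm:partition} is granted, and that is where the genuine work lies (to be carried out in Appendix \ref{A}). Within the proof of the corollary itself, I do not foresee a substantial obstacle; the only mildly technical point is the existence of a bounded-multiplicity planar cover of arbitrarily small diameter in step $(2) \Rightarrow (3)$, and this is classical.
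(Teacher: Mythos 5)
Your proof is correct and follows essentially the same cycle $(1)\Rightarrow(2)\Rightarrow(3)\Rightarrow(1)$ as the paper, with the same main ingredients (Theorem \ref{thm:partition} for $(1)\Rightarrow(2)$, Lemma \ref{lem:partition} together with a bounded-multiplicity cover for $(2)\Rightarrow(3)$). The only noticeable difference is in $(3)\Rightarrow(1)$: you invoke Remark \ref{rem:l0-1} to see that each finite-spectrum approximant satisfies {\bf(C)} and then pass to the limit using that $\overline{L_0^{-1}}$ is closed, whereas the paper instead writes each approximant as $\sum_j z_j\Pi_j$ and perturbs the $z_j$ so they avoid $\la$. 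Your version is a little more streamlined, but the paper's perturbation argument is chosen because it treats Corollary \ref{cor:FR1} and Corollary \ref{cor:FR2} in one stroke (for Corollary \ref{cor:FR1} one must produce \emph{self-adjoint invertible} approximants to verify real rank zero, and the direct appeal to Remark \ref{rem:l0-1} does not give that); since you are only proving Corollary \ref{cor:FR2}, the shortcut is legitimate.
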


\begin{proof} The corollaries are proved in the same manner.

By Remark \ref{rem:l0-1}, every self-adjoint operator $\,A\in
L_\sR\,$ satisfies the condition {\bf(C)}. Therefore the
implications $\,(1)\Rightarrow(2)\,$ follow from Theorem
\ref{thm:partition}.

Any subset of $\,\C\,$ admits a cover $\,\{\Om_j\}_{j=1}^m\,$ of
multiplicity four by open squares $\,\Om_j\,$ of arbitrarily small
size. If $\,A\in L_\nR\,$ has approximate spectral projections
$\,P_j\,$ associated with all such covers of its spectrum then, in
view of \eqref{partition}, the operator $\,A\,$ can be approximated
by operators of the form $\,\sum_{j=1}^mz_j\,P_j\in L_\fR\cap
L_\nR\,$. Moreover, if $\,A\in L_\sR\,$ then we can take
$\,z_j\in\R\,$, so that $\,\sum_{j=1}^mz_j\,P_j\in L_\fR\cap
L_\sR\,$. Thus $\,(2)\Rightarrow(3)\,$.

Finally, in view of Remark \ref{rem:projections}, every operator
$\,T\in L_\fR\cap L_\nR\,$ can be written in the form
$\,\sum_{j=1}^mz_j\,\Pi_j\,$, where $\,z_j\in\R\,$ whenever $\,T\in
L_\sR\,$ and $\,\Pi_j\,$ are mutually orthogonal projections lying
in $\,L\,$. If $\,A\,$ is approximated by a sequence of such
operators then it can also be approximated by a sequence of
operators $\,\sum_{j=1}^m\tilde z_j\,\Pi_j\,$, where $\,\Pi_j\,$ are
the same projections, $\,\tilde z_j\ne0\,$ and $\,\im\tilde z_j=\im
z_j\,$. This shows that $\,(3)\Rightarrow(1)\,$.
\end{proof}

\begin{remark}\label{rem:disclaimer}
The implications $\,(1)\Leftrightarrow(3)\,$ in the above
corollaries are known results (see Subsection \ref{B1} and
\cite[Theorem 3.2]{FR2}). In \cite{FR2}, the authors explained that
the part $\,(1)\Rightarrow(3)\,$ of Corollary \ref{cor:FR2} would
follow from the existence of projections `that approximately commute
with $\,A\,$ and approximately divide $\,\si(A)\,$ into disjoint
components'. The implications $\,(1)\Rightarrow(2)\Rightarrow(3)\,$
in Corollary \ref{cor:FR2} give a precise meaning to their
statement.
\end{remark}

\subsection{The main theorem}\label{S2.2}
In this subsection
\begin{enumerate}
\item[$\bullet$]
$\,\BR(r):=\{T\in L\,:\,\|T\|\leq r\}\,$ is the closed ball about
the origin in $\,L\,$ of radius $\,r\,$;
\item[$\bullet$]
$\,M_T\,$ denotes the convex hull of the set
$\,\bigcup_{S_1,S_2\in\BR(1)}S_1TS_2\,$, where $\,T\in L\,$;
\item[$\bullet$]
$\,J_T\,$ is the two-sided ideal in $\,L\,$ generated by the
operator $\,T\in L\,$.
\end{enumerate}

\begin{remark}\label{rem:M-ideal}
The ideal $\,J_T\,$ consists of finite linear combinations of
operators of the form $\,S_1TS_2\,$ where $\,S_1,S_2\in L\,$.
Therefore $\,J_T=\bigcup_{t\geq0}tM_T\,$ and $\,M_T\subset
J_T\cap\BR(\|T\|)\,$.
\end{remark}

\begin{remark}\label{rem:M-unitary}
The unit ball $\,\BR(1)\,$ coincides with the closed convex hull of
$\,L_\uR\,$ (see, for example, \cite{RD}). This implies that
$\,M_T\,$ is a subset of the closed convex hull of the set
$\,\bigcup_{U,V\in L_\uR}UTV\,$. Moreover, if
$\,\overline{L^{-1}}=L\,$ then $\,\BR(1)\,$ coincides with the
convex hull of $\,L_\uR\,$ (see \cite{R}) and, consequently, every
element of $\,M_T\,$ is a finite convex combination of operators of
the form $\,UTV\,$ with $\,U,V\in L_\uR\,$.
\end{remark}

We shall say that a continuous real valued function $\,f\,$
satisfies the condition (C$_{\eps,r}$) for some $\,\eps,r>0\,$ if
$\,f\,$ is defined on the interval $\,[-r-\eps,r+\eps]\,$  and
\begin{enumerate}
\item[(C$_{\eps,r}$)]
there exists $\,\eps'\in(0,\eps)\,$ such that the set
$\,\{x\in\R:f(x)=y\,\}\,$ is an $\,\eps'$-net in
$\,\{x\in\R:x^2+y^2\leq(r+\eps)^2\,\}\,$ for each
$\,y\in[-r-\eps,r+\eps]\,$.
\end{enumerate}
The condition (C$_{\eps,r}$) is fulfilled whenever the function
$\,f\,$ sufficiently rapidly oscillates between $\,-r-\eps\,$ and
$\,r+\eps\,$. In particular, it holds for $\,f(x)=(r+\eps)\cos(\pi
x/\eps)\,$.

\begin{lemma}\label{lem:xy}
Let $\,L\,$ have real rank zero, and let $\,X,Y\in L_\sR\,$ be
self-adjoint operators such that  the operator $\,A:=X+iY\,$
satisfies the condition {\bf(C)}. Then, for every function $\,f\,$
satisfying the condition {\rm(C$_{\eps,r}$)} with $\,r=\|A\|\,$, the
operator $\,Y\,$ belongs to the closure of the set
$\,f\left(X+\BR(\eps)\cap L_\sR\right)+J_{[X,Y]}\cap L_\sR\,$.
\end{lemma}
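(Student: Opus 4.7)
The plan is to pass to the quotient $C^*$-algebra $\tilde L := L/J$, where $J$ denotes the norm closure of the two-sided ideal $J_{[X,Y]}$, and exploit the fact that the image $\tilde A := \tilde X + i\tilde Y$ is a \emph{normal} element of $\tilde L$ because $[\tilde X, \tilde Y] = 0$. Writing $\pi : L \to \tilde L$ for the quotient map (so $\tilde T := \pi(T)$), I would construct the desired self-adjoint perturbation at the quotient level via Corollary~\ref{cor:FR2} and the level-set condition {\rm(C$_{\eps,r}$)}, and then lift it back to $L$.

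To apply Corollary~\ref{cor:FR2} to $\tilde A$ I must check that $\tilde L$ has real rank zero and that $\tilde A$ satisfies~{\bf(C)} in $\tilde L$. Both are standard: every self-adjoint element of $\tilde L$ admits a self-adjoint lift, which can be approximated in $L$ by invertible self-adjoints, and $\pi$ sends those to approximants in $\tilde L^{-1}\cap\tilde L_\sR$; and $\pi$ transports any path in $L^{-1}$ from $I$ to a path in $\tilde L^{-1}$ from $\tilde I$, so $\pi(L_0^{-1}) \subseteq \tilde L_0^{-1}$, hence $\pi(\overline{L_0^{-1}}) \subseteq \overline{\tilde L_0^{-1}}$, which applied to $A-\la I$ for every $\la\in\C$ gives~{\bf(C)} for $\tilde A$. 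Corollary~\ref{cor:FR2} then produces normal finite-spectrum approximants $\tilde A_n = \sum_k z_k^{(n)}\, \tilde P_k^{(n)} \to \tilde A$ with mutually orthogonal projections $\tilde P_k^{(n)}$ summing to $\tilde I$; decomposing $z_k^{(n)} = x_k + i y_k$ gives $\tilde X_n \to \tilde X$, $\tilde Y_n \to \tilde Y$, and since $\|\tilde A_n\| \to \|\tilde A\|\leq r$ every point $(x_k, y_k)$ lies in the disk of radius $r+\eps/2$ for large~$n$.

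For such $n$, the condition {\rm(C$_{\eps,r}$)} supplies $x_k^* \in f^{-1}(y_k)$ with $|x_k^* - x_k| \leq \eps'$. Setting $\tilde Z_n := \sum_k (x_k^* - x_k)\, \tilde P_k^{(n)}$ yields a self-adjoint element with $\|\tilde Z_n\| \leq \eps'$, and functional calculus gives $f(\tilde X_n + \tilde Z_n) = \sum_k f(x_k^*)\, \tilde P_k^{(n)} = \tilde Y_n$. I would then choose any self-adjoint lift $Z' \in L_\sR$ of $\tilde Z_n$ and set $Z_n := g(Z')$, where $g(t) := \max(-\eps, \min(\eps, t))$; since $g$ acts as the identity on $[-\eps', \eps'] \supseteq \si(\tilde Z_n)$ and is preserved by the $*$-homomorphism~$\pi$, the resulting $Z_n \in \BR(\eps)\cap L_\sR$ still satisfies $\pi(Z_n)=\tilde Z_n$.

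By uniform continuity of $f$ on $[-r-\eps, r+\eps]$ together with $\tilde X_n \to \tilde X$ one has $\|f(\tilde X+\tilde Z_n)-\tilde Y_n\|\to 0$, whence $\|\pi(Y - f(X+Z_n))\|\to 0$. Choosing $K_n \in J$ with $\|Y - f(X+Z_n) - K_n\| \to 0$, replacing $K_n$ by its self-adjoint part (which remains in $J$ because $[X,Y]^* = -[X,Y]$ makes $J_{[X,Y]}$ self-adjoint), and then approximating each $K_n$ in norm by elements of $J_{[X,Y]} \cap L_\sR$, exhibits $Y$ as a norm limit of elements of $f(X+\BR(\eps)\cap L_\sR) + J_{[X,Y]}\cap L_\sR$. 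The main technical obstacle is the passage to the quotient---verifying that real rank zero and condition~{\bf(C)} descend and performing a norm-controlled self-adjoint lift of $\tilde Z_n$; once these are in place, the matching of $f$-levels at the atoms of $\tilde A_n$ is an explicit functional-calculus computation permitted exactly by the ($C_{\eps,r}$)-property of~$f$.
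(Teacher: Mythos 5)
Your proposal is correct and follows essentially the same strategy as the paper: pass to the quotient by $\,\overline{J_{[X,Y]}}\,$ (checking that real rank zero and condition {\bf(C)} descend), apply Corollary~\ref{cor:FR2} there to get finite-spectrum normal approximants, shift the real parts of the eigenvalues onto the graph of $\,f\,$ via the $\,\eps'$-net property, and lift back to $\,L\,$. The only cosmetic difference is in the lifting step---you apply a cutoff to a self-adjoint lift of the small perturbation $\,\tilde Z_n\,$, whereas the paper works with an $\,\eps_0<\eps\,$ margin and uses the identity \eqref{xy1} twice---but both are elementary and achieve the same effect.
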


\begin{proof}
Assume first that $\,J_{[X,Y]}=\{0\}\,$, so that $\,A\,$ is normal.
By Corollary \ref{cor:FR2}, for each $\,\de\in(0,\eps]\,$ there
exists an operator $\,A_\de\in L_\nR\cap L_\fR\,$ with finite
spectrum $\,\si(A_\de)=\{z_1,\ldots,z_m\}\,$ such that $\,\|A-
A_\de\|\leq\de\,$ and, consequently, $\,|z_j|\leq r+\de\,$ for all
$\,j\,$. In view of (C$_{\eps,r}$), one can find real numbers
$\,\eps_k\in[-\eps',\eps']\,$ such that $\,z_k+\eps_k\,$ lie on the
graph of $\,f\,$ for all $\,k=1,\ldots,m\,$. Let $\,A'_\de\,$ be the
operator with eigenvalues $\,z_k+\eps_k\,$ and the same spectral
projections as $\,A_\de\,$. Then $\,\|Y-\im A'_\de\|\leq\de\,$,
$\,\|X-\re A'_\de\|\leq\de+\eps'\,$ and $\,\im A'_\de=f(\re
A'_\de)\,$. This implies that
$$
\left(Y+\BR(\de)\right)\bigcap f\left(X+\BR(\eps)\cap L_\sR\right)\
\ne\ \varnothing\,,\qquad\forall\de\in(0,\eps-\eps']\,.
$$
Letting $\,\de\to0\,$, we see that $\,Y\in
\overline{f\left(X+\BR(\eps)\cap L_\sR\right)}\,$.

Assume now that $\,J_{[X,Y]}\ne\{0\}\,$ and denote
$\,L':=\overline{J_{[X,Y]}}\,$. Let us consider the quotient
$\,C^*$-algebra $\,L/L'\,$ and the corresponding quotient map
$\,\pi: L\mapsto L/L'\,$. Since the map $\,\pi\,$ is continuous and
$\,\pi S=\pi(S+S^*)/2\,$ for all self-adjoint elements $\,\pi S\in
L/L'\,$, the quotient algebra also has real rank zero and $\,\pi
S\in\overline{(L/L')_0^{-1}}\,$ whenever
$\,S\in\overline{L_0^{-1}}\,$. The latter implies that the normal
element $\,\pi A\,$ of the quotient algebra $\,L/L'\,$ also
satisfies the condition {\bf(C)}.

Applying the previous result with $\,\eps\,$ replaced by
$\,\eps_0\in(\eps',\eps)\,$ to $\,\pi A=\pi X+i\pi Y\,$, we can find
a sequence of operators $\,X_n\in L_\sR\,$ such that $\,f(\pi
X_n)\to \pi Y\,$ as $\,n\to\infty\,$ and $\,\|\pi X-\pi
X_n\|\leq\eps_0\,$ for all $\,n\,$. Since $\,\|T\|\geq\|\re T\|\,$
for all $\,T\in L\,$, we have
\begin{equation}\label{xy1}
 \|\pi T\|\ :=\ \inf_{R\in L'}\|T-R\|\
=\ \inf_{R\in L'\cap L_\sR}\|T-R\|\,,\qquad\forall T\in L_\sR\,.
\end{equation}
Therefore, there exist operators $\,R_n\in L'\cap L_\sR\,$ such that
$\,X_n+R_n\in X+\BR(\eps)\cap L_\sR\,$. Since $\,f\,$ can be
uniformly approximated by polynomials on any compact subset of
$\,\R\,$ and $\,Q(\pi X_n)=\pi Q(X_n)\,$ for any polynomial $\,Q\,$,
we have $\,f(\pi X_n)=f(\pi(X_n+R_n))=\pi f(X_n+R_n)\,$ and,
consequently, $\,\|\pi(f(X_n+R_n)-Y)\|\to0\,$. In view of
\eqref{xy1}, there exist operators $\,\tilde R_n\in L'\cap L_\sR\,$
such that $\,f(X_n+R_n)+\tilde R_n\to Y\,$ as $\,n\to\infty\,$. This
implies that $\,Y\,$ belongs to the closure of the set
$\,f\left(X+\BR(\eps)\cap L_\sR\right)+L'\cap L_\sR\,$ which
coincides with $\,\overline{f\left(X+\BR(\eps)\cap
L_\sR\right)+J_{[X,Y]}\cap L_\sR}\,$.
\end{proof}

\begin{corollary}\label{cor:normal-1}
Assume that $\,L\,$ has real rank zero. If $\,A\in L\,$ satisfies
{\bf(C)} then
\begin{equation}\label{xy-2}
A\ \in\ \overline{\BR(\|A\|)\cap L_\nR+J_{[A^*,A]}\cap L_\sR}\,.
\end{equation}
\end{corollary}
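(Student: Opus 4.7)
The plan is to apply Lemma~\ref{lem:xy} not to $A$ directly but to the rotated operator $iA$. Writing $A = X + iY$ with $X = \re A$, $Y = \im A$, we have $[A^*,A] = 2i[X,Y]$, so $J_{[A^*,A]} = J_{[X,Y]}$. The operator $iA = -Y + iX$ has norm $\|A\|$ and the same associated ideal, and it still satisfies \textbf{(C)}: since $iI\in L_0^{-1}$ (connected to $I$ by the path $t\mapsto e^{it\pi/2}I$), we have $iA - \la I = iI\cdot(A-(-i\la)I) \in \overline{L_0^{-1}}$ for every $\la\in\C$.

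Applying Lemma~\ref{lem:xy} to $iA$ with ``$X$''$=-Y$, ``$Y$''$=X$, and a function $f$ satisfying (C$_{\eps,r}$) for $r=\|A\|$ yields, for any $\eta>0$, an operator $Y_\de\in L_\sR$ with $\|Y-Y_\de\|\leq\eps$ and a self-adjoint $R\in J_{[A^*,A]}$ such that $\|X-f(-Y_\de)-R\|\leq\eta$. I then set $N := f(-Y_\de) + iY_\de$. Since both $f(-Y_\de)$ and $Y_\de$ are continuous functions of the self-adjoint $Y_\de$, they commute, so $N$ is normal. A direct computation gives
$$
A - N - R \;=\; \bigl(X - f(-Y_\de) - R\bigr) + i(Y - Y_\de),
$$
hence $\|A - N - R\|\leq \eta + \eps$. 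Crucially, the residual $R$ is \emph{self-adjoint} and lies in $J_{[A^*,A]}$; applying the lemma to $A$ itself would instead produce a skew-adjoint residual $iR$, and this rotation of the argument is the key observation.

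It remains to enforce $\|N\|\leq\|A\|$. I would choose $f$ whose graph lies in a disk only slightly larger than radius $r$; for instance $f(t) := \sqrt{(r+2\eps)^2-t^2}\,\cos(Kt)$ with $K$ large satisfies (C$_{\eps,r}$) — the oscillation makes $f^{-1}(y)$ arbitrarily dense in the required $t$-interval as $K\to\infty$ — while $t^2+f(t)^2\leq(r+2\eps)^2$ confines the graph to the disk of radius $r+2\eps$. Consequently $\|N\|\leq r+2\eps$. A truncation via continuous functional calculus, $\tilde N := \phi(N)$ with $\phi(z):=z\min(1,r/|z|)$, then produces a normal $\tilde N\in\BR(\|A\|)\cap L_\nR$ with $\|N-\tilde N\|\leq 2\eps$, giving $\|A-\tilde N-R\|\leq 3\eps+\eta$. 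Letting $\eps,\eta\to0$ completes the proof. The subtle point is this choice of $f$: the paper's standard example $(r+\eps)\cos(\pi x/\eps)$ has graph in a strip of height $r+\eps$, which would force $\|N\|$ up to $\sqrt2(r+\eps)$ and a truncation error of order $r$ that does not vanish with $\eps$ — so $f$ really must be designed to fill a disk, not a strip.
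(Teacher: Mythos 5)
Your proof and the paper's both hinge on Lemma~\ref{lem:xy}, but you have spotted a genuine defect in the paper's own argument and corrected it. In the paper's proof one applies Lemma~\ref{lem:xy} directly to $\,A=X+iY\,$, obtaining $\,\|Y-f_\eps(X_\eps)-R\|\,$ small with $\,R\in J_{[A^*,A]}\cap L_\sR\,$, and then sets $\,\tilde A_\eps:=X_\eps+if_\eps(X_\eps)\,$. The resulting residual is
$$
A-\tilde A_\eps\ =\ (X-X_\eps)+i\bigl(Y-f_\eps(X_\eps)\bigr)\ \approx\ (X-X_\eps)+iR\,,
$$
whose imaginary part $\,\approx R\,$ need not be small, so $\,A-\tilde A_\eps\,$ is approximately $\,iR\,$ \emph{plus} an $\,O(\eps)\,$ term; since $\,iR\,$ is skew-adjoint, the paper's claim $\,A-\tilde A_\eps\in J_{[A^*,A]}\cap L_\sR+\BR(2\eps)\cap L_\sR\,$ (a set consisting of self-adjoint operators) cannot hold unless $\,R\,$ is itself small. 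Your rotation to $\,iA=-Y+iX\,$, justified by the clean observation $\,iI\in L_0^{-1}\,$ and the fact that $\,L_0^{-1}\,$ is a subgroup, swaps the two roles: the Lemma now produces $\,\|X-f(-Y_\de)-R\|\,$ small, you set $\,N:=f(-Y_\de)+iY_\de\,$, and the residual $\,A-N-R=(X-f(-Y_\de)-R)+i(Y-Y_\de)\,$ has small imaginary part while the self-adjoint $\,R\,$ sits in the real part, exactly as needed. The commutator bookkeeping $\,[(iA)^*,iA]=[A^*,A]\,$ and the (C)-check for $\,iA\,$ are both correct.

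Two further points are worth noting. First, your norm-control step is also a correction: you rightly observe that one must pick $\,f\,$ whose graph fills a \emph{disc}, not merely a strip, or the truncation error would be of order $\,r\,$; moreover, a graph confined to the closed disc of radius exactly $\,r+\eps\,$, as the paper asks for, cannot satisfy (C$_{\eps,r}$) (at $\,y=\pm(r+\eps)\,$ the target set is $\,\{0\}\,$, forcing $\,f(0)\,$ to be both $\,r+\eps\,$ and $\,-(r+\eps)\,$). Your choice of a graph in a disc of radius $\,r+2\eps\,$ followed by the radial truncation $\,\phi(z)=z\min(1,r/|z|)\,$ avoids this and is essentially equivalent to the paper's rescaling $\,A_\eps:=r(r+\eps)^{-1}\tilde A_\eps\,$. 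Second, the reason $\,\|N\|\le r+2\eps\,$ is that $\,N^*N=f(-Y_\de)^2+Y_\de^2=g(Y_\de)\,$ with $\,g(s)=s^2+f(-s)^2\le(r+2\eps)^2\,$; writing $\,\|N\|^2=\|f(-Y_\de)\|^2+\|Y_\de\|^2\,$ is not literally correct for commuting self-adjoint operators, but the conclusion is right and matches a similar slip in the paper. In summary: the route is the paper's, but the rotation to $\,iA\,$ is an essential repair that the paper's text omits, and you have identified exactly why it is needed.
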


\begin{proof}
Let $\,r:=\|A\|\,$. Given $\,\eps>0\,$, let us choose a function
$\,f_\eps\,$ satisfying the condition (C$_{\eps,r}$) whose graph
lies in the disc $\,\{x^2+y^2\leq(r+\eps)^2\}\,$. Applying Lemma
\ref{lem:xy}, we can find an operator $\,X_\eps\in\re
A+\BR(\eps)\cap L_\sR\,$ such that $\,\im A\in
f_\eps(X_\eps)+J_{[A^*,A]}\cap L_\sR+\BR(\eps)\cap L_\sR\,$. The
operator $\,\tilde A_\eps:=X_\eps+if_\eps(X_\eps)\,$ is normal, and
$\,A-\tilde A_\eps\in J_{[A^*,A]}\cap L_\sR+\BR(2\eps)\cap L_\sR\,$.
Therefore, there exist operators $\,R_\eps\in J_{[A^*,A]}\cap
L_\sR\,$ such that $\,\tilde A_\eps+R_\eps\to A\,$ as
$\,\eps\to0\,$. Since $\,x^2+(f(x))^2\leq(r+\eps)^2\,$, we have
$$
\|\tilde A_\eps\|^2\ =\ \|X_\eps\|^2+\|f_\eps(X_\eps)\|^2\ \leq\
(r+\eps)^2\,.
$$
If $\,A_\eps:=r\,(r+\eps)^{-1}\tilde A_\eps\,$ then, by the above,
$\,A_\eps\in\BR(r)\cap L_\nR\,$ and $\,A_\eps+R_\eps\to A\,$ as
$\,\eps\to0\,$.
\end{proof}

\begin{remark}\label{rem:normal-2}
By Corollary \ref{cor:normal-1},
$\,\overline{L_\nR+L'}=\overline{L_\nR+L'\cap L_\sR}\,$ for any
two-sided ideal $\,L'\subset L\,$ in a $C^*$-algebra $\,L\,$ of real
rank zero. Indeed, if $\,A\in L_\nR+L'\,$ then $\,J_{[A^*,A]}\subset
L'\,$ and, in view of \eqref{xy-2}, $\,A\,$ can be approximated by
operators from $\,L_\nR+L'\cap L_\sR\,$.
\end{remark}

The following refinement of Corollary \ref{cor:normal-1} is the main result of the paper.

\begin{theorem}\label{thm:main}
There is a nonincreasing function
$\,h:(0,\infty)\mapsto[0,\infty)\,$ such that $\,h(\eps)=0\,$ for
all $\,\eps\geq1\,$ and
\begin{equation}\label{main-1}
A\ \in\ \BR(\|A\|)\cap L_\nR\,+\,h(\eps)\,M_{[A^*,A]}\cap
L_\sR\,+\,\BR(\eps)
\end{equation}
for all $\,\eps\in(0,\infty)\,$, all $\,C^*$-algebras $L$ of real
rank zero and all operators $\,A\in\BR(1)\,$ satisfying the
condition {\bf(C)}.
\end{theorem}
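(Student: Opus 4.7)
The plan is to upgrade the qualitative statement of Corollary \ref{cor:normal-1} to the claimed uniform estimate by a compactness argument via $C^*$-ultraproducts. Monotonicity of $h$ is immediate from the nesting of the sets on the right of \eqref{main-1}, and for $\eps\geq 1$ the choice $N=0\in\BR(1)\cap L_\nR$ together with a zero ideal term gives $A\in\BR(1)\subseteq\BR(\eps)$, so $h(\eps)=0$ works there. The substance is to produce, for each $\eps\in(0,1)$, a finite $h(\eps)$ that serves every admissible pair $(L,A)$ at once. Arguing by contradiction, if this fails at some $\eps_0\in(0,1)$, there exist real-rank-zero $C^*$-algebras $L_n$, operators $A_n\in\BR(1)\subset L_n$ satisfying {\bf(C)}, and $c_n\to\infty$ with $\dist(A_n,\BR(1)\cap(L_n)_\nR+c_n M_{[A_n^*,A_n]}\cap(L_n)_\sR)>\eps_0$ for all $n$. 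I form the $C^*$-ultraproduct $L:=\prod_\omega L_n$ along a nonprincipal ultrafilter $\omega$, set $A:=(A_n)_\omega\in L$, and aim for a contradiction.

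The first task is to check that the hypotheses of Corollary \ref{cor:normal-1} pass to $(L,A)$. Real rank zero of $L$ follows from Lemma \ref{lem:uniform}(2) applied componentwise: for $X=(X_n)_\omega\in L_\sR$ and $\delta>0$, the self-adjoint invertibles $S_{n,\delta}$ with $\|S_{n,\delta}^{-1}\|\leq\delta^{-1}$ and $\|X_n-S_{n,\delta}\|\leq 2\delta$ assemble into an invertible self-adjoint element of $L$ within $2\delta$ of $X$. Condition {\bf(C)} for $A$ at each $\lambda\in\C$ is verified by combining Lemma \ref{lem:uniform}(1) (to produce $B_{n,\delta}\in(L_n)_0^{-1}$ near $A_n-\lambda I$ with controlled inverses) and Lemma \ref{lem:uniform}(3) (whose uniform Lipschitz and inverse-norm bounds allow the paths $Z_n(t)$ from $I$ to $B_{n,\delta}$ to assemble into a continuous path in $L^{-1}$), placing $(B_{n,\delta})_\omega$ in $L_0^{-1}$.

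Corollary \ref{cor:normal-1} applied to $A\in L$ then yields $A=N+R+E$ with $N\in\BR(1)\cap L_\nR$, $R\in J_{[A^*,A]}\cap L_\sR$ and $\|E\|<\eps_0/3$; by Remark \ref{rem:M-ideal}, $R\in cM_{[A^*,A]}\cap L_\sR$ for some finite $c$. The ideal element $R$ lifts back without difficulty: writing $R$ as $c\sum_k\alpha_k S^{(1)}_k[A^*,A]S^{(2)}_k$ in convex-combination form and lifting each contraction $S^{(i)}_k$ componentwise produces $R_n\in cM_{[A_n^*,A_n]}\cap(L_n)_\sR$ (symmetrising with the adjoint to preserve self-adjointness) with $(R_n)_\omega=R$. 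The hard step, and the main obstacle I expect, is to upgrade the lift of $N$ from an almost-normal element in each $L_n$ to a genuinely normal one. The plan is to first approximate $N$ within $\eps_0/6$ by a finite-spectrum element $\sum_k z_k\Pi_k\in L_\fR\cap L_\nR$ via Corollary \ref{cor:FR2}(3), then lift each projection $\Pi_k$ individually: a self-adjoint representative $Q_{k,n}$ satisfies $\|Q_{k,n}^2-Q_{k,n}\|\to 0$ along $\omega$, so Corollary \ref{cor:FR2} applied in $L_n$ produces a finite-spectrum self-adjoint approximation with spectrum clustered near $\{0,1\}$, and rounding those eigenvalues to $\{0,1\}$ yields a genuine projection $\Pi_{k,n}'\in L_n$; a standard inductive orthogonalisation inside $L_n$ then replaces $\{\Pi_{k,n}'\}_k$ by mutually orthogonal projections with unchanged ultraproduct limits. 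Setting $\tilde N_n:=\sum_k z_k\Pi_{k,n}'\in(L_n)_\nR\cap\BR(1)$ gives $\|A_n-\tilde N_n-R_n\|<\eps_0$ along $\omega$, contradicting $c_n\to\infty$ once $c_n$ exceeds $c$.
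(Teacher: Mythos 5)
Your overall strategy (a compactness/contradiction argument built on Corollary \ref{cor:normal-1} and the uniform estimates in Lemma \ref{lem:uniform}) matches the paper's, but the specific construction does not: the paper uses the $\ell^\infty$-\emph{direct product} $\,\LC=\prod_\xi L_\xi\,$, whereas you pass to the \emph{ultraproduct} (the quotient of that product by the sequences that vanish along $\,\omega\,$). This is a consequential difference. In the direct product, an element $\,\TC=\{T_\xi\}\,$ is normal if and only if every component $\,T_\xi\,$ is normal, and similarly a representative $\,\RC\in tM_{[\AC^*,\AC]}\cap\LC_\sR\,$ decomposes literally into $\,R_\xi\in tM_{[A_\xi^*,A_\xi]}\cap(L_\xi)_\sR\,$; so once Corollary \ref{cor:normal-1} is applied to $\,\AC\,$, the conclusion for each $\,A_\xi\,$ simply falls out, with no lifting step at all. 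By quotienting, you create exactly the lifting problem you then have to solve: a normal element of the ultraproduct only gives $\,\|N_n^*N_n-N_nN_n^*\|\to0\,$ along $\,\omega\,$, not componentwise normality.

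There is a genuine gap in your proposed fix for that lifting problem. You apply Corollary \ref{cor:FR2}(3) to the normal element $\,N\,$ produced by Corollary \ref{cor:normal-1} in order to approximate it by a finite-spectrum normal $\,\sum_kz_k\Pi_k\,$. But the equivalence in Corollary \ref{cor:FR2} is between $\,N\in\overline{L_\fR\cap L_\nR}\,$ and $\,N\,$ satisfying condition {\bf(C)}, and the statement of Corollary \ref{cor:normal-1} asserts nothing about the normal summand satisfying {\bf(C)}; a normal element of a real-rank-zero $\,C^*$-algebra need not (in $\,C(\Sbb)\,$, for instance, the unitary $\,z\,$ has winding number one and fails {\bf(C)}, and ultraproducts of real-rank-zero algebras can have nontrivial $\,K_1\,$). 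So this step, as written, does not go through. The gap is fillable: in the proof of Corollary \ref{cor:normal-1} the normal summand is $\,A_\eps=r(r+\eps)^{-1}\bigl(X_\eps+if_\eps(X_\eps)\bigr)\,$, a continuous function of a self-adjoint operator, so its spectrum lies on a compact arc with dense connected complement and Remark \ref{rem:l0-1} gives {\bf(C)}. But you would have to extract and state this extra property rather than invoke Corollary \ref{cor:normal-1} as a black box. Even then, the paper's unquotiented direct product achieves the same conclusion with none of the projection-lifting, orthogonalisation, or normalisation bookkeeping, so the ultraproduct buys nothing here and costs a nontrivial extra argument.
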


\begin{remark}\label{rem:eps}
In other words, the inclusion \eqref{main-1} means that for each
$\,\eps>0\,$ there exist a normal operator $\,T(\eps)\in L_\nR\,$
and a finite linear combination
\begin{equation}\label{lin-comb1}
S(\eps)\ =\ \sum_jc(j,\eps)\,S_1(j,\eps)[A^*,A]S_2(j,\eps)
\end{equation}
with $\,S_k(j,\eps)\in L\,$ and $\,c(j,\eps)\in(0,1]\,$ such that
$\,\|T(\eps)\|\leq\|A\|\,$, $\,S(\eps)\in L_\sR\,$,
$\,\|S_k(j,\eps)\|\leq1\,$, $\,\sum_jc(j,\eps)=1\,$ and
$$
\|\,A-T(\eps)-h(\eps)\,S(\eps)\;\|\ \leq\ \eps\,.
$$
Note that \eqref{lin-comb1} can be written as a linear combination
of self-adjoint operators,
\begin{equation}\label{lin-comb2}
S(\eps)\ =\ \sum_jc(j,\eps)\left(S_+^*(j,\eps)[A^*,A]S_+(j,\eps)
-S_-^*(j,\eps)[A^*,A]S_-(j,\eps)\right)
\end{equation}
where $\,\|S_\pm(j,\eps)\|\leq1\,$. Indeed, if
$\,S_\pm(j,\eps):=\frac12\left(S_1^*(j,\eps)\pm
S_2(j,\eps)\right)\,$ then the real part of each term in the right
hand side of \eqref{lin-comb1} coincides with corresponding term in
\eqref{lin-comb2}.
\end{remark}

\begin{proof}
Let us consider a family of $\,C^*$-algebras $\,L_\xi\,$ of real
rank zero parameterised by $\,\xi\in\Xi\,$, where $\,\Xi\,$ is an
arbitrary index set, and let $\,\LC\,$ be their direct product. By
definition, the $\,C^*$-algebra $\,\LC\,$ consists of families
$\,\SC=\{S_\xi\}\,$ with $\,S_\xi\in L_\xi\,$ such that
$\,\|\SC\|_\LC:=\sup_{\xi\in\Xi}\|S_\xi\|<\infty\,$,
$\,\SC^*:=\{S_\xi^*\}\,$ and $\,\SC\tilde\SC=\{S_\xi\tilde
S_\xi\}\,$. Let $\,\BR_\LC(r)\,$ and $\,\BR_\xi(r)\,$ be the balls
of radius $\,r\,$ about the origin in $\,\LC\,$ and $\,L_\xi\,$
respectively.

In view of Lemma \ref{lem:uniform}(2), $\,\LC\,$ has real rank zero.
Lemma \ref{lem:uniform}(3) implies that $\,\{S_\xi\}\in\LC_0^{-1}\,$
whenever $\,\{S_\xi\}\in\LC\,$, $\,S_\xi\in(L_\xi)_0^{-1}\,$ for
each $\,\xi\in\Xi\,$ and
$\,\sup_{\xi\in\Xi}\|S_\xi^{-1}\|<\infty\,$. From here and Lemma
\ref{lem:uniform}(1) it follows that $\,\AC=\{A_\xi\}\in\LC\,$
satisfies the condition {\bf(C)} whenever all the operators
$\,A_\xi\,$ satisfy {\bf(C)}.

Let us fix $\,\eps\in(0,1)\,$ and consider an arbitrary family
$\,\AC=\{A_\xi\}\in\LC\,$ of operators $\,A_\xi\in L_\xi\,$
satisfying {\bf(C)}. Applying Corollary \ref{cor:normal-1} to
$\,\AC\,$, we see that there exist families of operators
$\,\TC_\eps=\{T_{\xi,\eps}\}\in\BR_\LC(\|\AC\|_\LC)\cap\LC_\nR\,$
and $\,\RC_\eps=\{R_{\xi,\eps}\}\in J_{[\AC^*,\AC]}\cap\LC_\sR\,$
such that $\,\|\AC-\TC_\eps-\RC_\eps\|_\LC\leq\eps\,$. The estimate
for the norm holds if and only if
$\,A_\xi-T_{\xi,\eps}-R_{\xi,\eps}\in\BR_\xi(\eps)\,$ for all
$\,\xi\in\Xi\,$. The inclusion
$\,\TC_\eps\in\BR_\LC(\|\AC\|_\LC)\cap\LC_\nR\,$ means that
$\,T_{\xi,\eps}\in\BR_\xi(\|\AC\|_\LC)\cap(L_\xi)_\nR\,$ for all
$\,\xi\in\Xi\,$. Finally, by Remark \ref{rem:M-ideal},
$\,J_{[\AC^*,\AC]}=\bigcup_{t\geq0}tM_{[\AC^*,\AC]}\,$. This
identity and the inclusion $\,\RC_\eps\in
J_{[\AC^*,\AC]}\cap\LC_\sR\,$ imply that $\,R_{\xi,\eps}\in
tM_{[A_\xi^*,A_\xi]}\cap(L_\xi)_\sR\,$ for all $\,\xi\in\Xi\,$ and
some $\,t\,$ independent of $\,\xi\,$. Thus we obtain
\begin{equation}\label{main-3}
A_\xi\ \in\ \BR_\xi(\|\AC\|_\LC)\cap(L_\xi)_\nR
\,+\,tM_{[A_\xi^*,A_\xi]}\cap(L_\xi)_\sR\,+\,\BR_\xi(\eps)\,,
\qquad\forall\xi\in\Xi\,,
\end{equation}
where $\,t\,$ is a nonnegative number which does not depend on
$\,\xi\,$.

If \eqref{main-1} were not true for any $\,h(\eps)\in[0,\infty)\,$
then there would exist families of $\,C^*$-algebras $\,L_\xi\,$ and
operators $\,A_\xi\in L_\xi\,$ satisfying the condition {\bf(C)},
for which \eqref{main-3} would not hold with any $\,t\,$ independent
of $\,\xi\,$. However, by the above, it is not possible. Thus we
have \eqref{main-1} with some function $\,h\,$ for all
$\,\eps\in(0,1)\,$. Since $\,A\in\BR(1)\,$, we can extend
$\,h(\eps)\,$ by zero for $\,\eps\geq1\,$. It remains to notice that
$\,h\,$ can be chosen nonincreasing because the same inclusion holds
for $\,\tilde h(\eps):=\sup_{t\geq\eps}h(t)\,$.
\end{proof}

If $\,t\in[0,\infty)\,$, let
\begin{equation}\label{h0}
F(t)\ :=\ \inf_{\eps>0}\left(h(\eps)\,t+\eps\right)\,,
\end{equation}
where $\,h\,$ is the function introduced in Theorem \ref{thm:main}.
The function $\,F:[0,\infty)\mapsto[0,1]\,$ is nondecreasing,
$\,F(0)=0\,$ and $\,F(t)>0\,$ whenever $\,t>0\,$. Since the subgraph
of $\,F\,$ coincides with an intersection of half-planes, $\,F\,$ is
concave and, consequently, continuous.

\begin{corollary}\label{cor:main-2}
Let $\,L\,$ be a $\,C^*$-algebra of real rank zero, and let
$\,\|\cdot\|_\star\,$  be a continuous seminorm on $\,L\,$ such that
\begin{equation}\label{main-4}
\|USV\|_\star\ \leq\ \|S\|_\star\quad\text{and}\quad\|S\|_\star\leq
C_\star\|S\|\quad\text{for all}\ \,S\in L\,\ \text{and all}\
\,U,V\in L_\uR\,,
\end{equation}
where $\,C_\star\,$ is a positive constant. Then
\begin{equation}\label{main-5}
\inf_{T\in L_\nR\,:\,\|T\|\leq\|A\|}\|A-T\|_\star\ \leq\
C_\star\,F(C_\star^{-1}\|[A^*,A]\|_\star)
\end{equation}
for all operators $\,A\in\BR(1)\,$ satisfying the condition
{\bf(C)}.
\end{corollary}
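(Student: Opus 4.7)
The plan is to apply Theorem \ref{thm:main} and convert the operator-norm remainder estimate into one in $\|\cdot\|_\star$, the additional input being that the ``commutator term'' $M_{[A^*,A]}$ is controlled in $\|\cdot\|_\star$ by $\|[A^*,A]\|_\star$. Since $F$ is defined as the infimum in \eqref{h0}, matching this shape is just a matter of taking an infimum over $\eps$ at the end.

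Fix $\eps\in(0,\infty)$ and, by Theorem \ref{thm:main}, write
$$A\ =\ T(\eps)\,+\,h(\eps)\,S(\eps)\,+\,R(\eps),$$
with $T(\eps)\in L_\nR$ satisfying $\|T(\eps)\|\leq\|A\|$, with $S(\eps)\in M_{[A^*,A]}\cap L_\sR$, and with $\|R(\eps)\|\leq\eps$. The triangle inequality for $\|\cdot\|_\star$ together with the second half of \eqref{main-4} applied to $R(\eps)$ then yields
$$\|A-T(\eps)\|_\star\ \leq\ h(\eps)\,\|S(\eps)\|_\star\,+\,C_\star\,\eps.$$
Once the bound $\|S(\eps)\|_\star\leq\|[A^*,A]\|_\star$ is established, taking the infimum over $\eps$ would immediately give \eqref{main-5}, because
$$\inf_{\eps>0}\bigl(h(\eps)\,\|[A^*,A]\|_\star+C_\star\,\eps\bigr)\ =\ C_\star\,F\bigl(C_\star^{-1}\,\|[A^*,A]\|_\star\bigr),$$
and each $T(\eps)$ is an admissible competitor on the left hand side of \eqref{main-5}.

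The main step, and really the only nontrivial one, is therefore to show that $\|S\|_\star\leq\|[A^*,A]\|_\star$ for every $S\in M_{[A^*,A]}$. Such an $S$ is a finite convex combination $\sum_j c_j\,B_j[A^*,A]C_j$ with $B_j,C_j\in\BR(1)$. By Remark \ref{rem:M-unitary}, each $B_j$ and $C_j$ is a norm limit of finite convex combinations of unitary elements of $L$. For any two finite convex combinations $B'=\sum_k a_k U_k$ and $C'=\sum_l b_l V_l$ of unitaries, the hypothesis $\|USV\|_\star\leq\|S\|_\star$ together with the triangle inequality gives
$$\|B'[A^*,A]C'\|_\star\ \leq\ \sum_{k,l}a_k b_l\,\|U_k[A^*,A]V_l\|_\star\ \leq\ \|[A^*,A]\|_\star,$$
and the second inequality in \eqref{main-4} lets me pass from $B',C'$ to the limits $B_j,C_j$ without losing the estimate, since the error in operator norm is bounded by $C_\star$ times the operator-norm error. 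Multiplying by $c_j$ and summing over $j$ then yields $\|S\|_\star\leq\|[A^*,A]\|_\star$, completing the plan. The only place where any real work is needed is this Russo--Dye style approximation for elements of $M_{[A^*,A]}$; once it is in hand, the remainder of the argument is purely bookkeeping.
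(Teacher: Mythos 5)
Your proposal is correct and follows essentially the same route as the paper's proof: both derive from Remark \ref{rem:M-unitary} (the Russo--Dye fact that the unit ball is the closed convex hull of the unitaries) that $\|S\|_\star\leq\|[A^*,A]\|_\star$ for all $S\in M_{[A^*,A]}$, and then apply the decomposition from Theorem \ref{thm:main}, use \eqref{main-4} on the remainder, and optimize over $\eps$ using the definition \eqref{h0} of $F$. The paper states the intermediate bound as the general inequality \eqref{main-6}, whereas you prove the special case directly, but this is a cosmetic difference.
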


\begin{proof}
In view of Remark \ref{rem:M-unitary}, from the inequalities \eqref{main-4} it follows that
\begin{equation}\label{main-6}
\|S_1SS_2\|_\star\ \leq\ \|S_1\|\,\|S\|_\star\,\|S_2\|\quad\text{for
all}\ \,S,S_1,S_2\in L
\end{equation}
and, consequently, $\,\|S\|_\star\leq\|[A^*,A]\|_\star\,$ for all
$\,S\in M_{[A^*,A]}\,$. Since $\,\|R\|_\star\leq
C_\star\,\|R\|\leq\eps\,C_\star\,$ for all $\,R\in\BR(\eps)\,$, the
inclusion \eqref{main-1} implies that
$$
\inf_{T\in L_\nR\,:\,\|T\|\leq\|A\|}\|A-T\|_\star\ \leq\
h(\eps)\,\|[A^*,A]\|_\star+\eps\,C_\star\,,\qquad\forall\eps>0\,.
$$
Taking the infimum over
$\,\eps>0\,$, we obtain \eqref{main-5}.
\end{proof}

\begin{remark}\label{rem:convex}
It is clear from the prooof that \eqref{main-5} can be extended to
general functions $\,\|\cdot\|_\star:L\mapsto\R_+\,$ satisfying
\eqref{main-4} and suitable quasiconvexity conditions.
\end{remark}

\begin{example}\label{ex:ideal}
Let $\,J\,$ be a two-sided ideal in $\,L\,$. Then the seminorm
$\,\|A\|_\star:=\dist\left(A,J\right)\,$ satisfies the conditions
\eqref{main-4} with $\,C_\star=1\,$. Corollary \ref{cor:main-2}
implies that
\begin{equation}\label{ideal-1}
\dist\bigl(A,J+\BR(\|A\|)\cap L_\nR\bigr)\ \leq\
F\bigl(\dist([A^*,A],J)\bigr)
\end{equation}
for all $\,A\in\BR(1)\,$ satisfying the condition {\bf(C)}.
\end{example}

\section{Applications}\label{S3}

Throughout this section
\begin{enumerate}
\item[$\bullet$]
$\,\CC(H)\,$ is the $C^*$-algebra of compact operators in $\,H\,$;
\item[$\bullet$]
$\,\SC_p\,$ are the Schatten classes of compact operators and
\item[$\bullet$]
$\,\|\cdot\|_p\,$ are the corresponding norms (we shall always be
assuming that $\,p\geq1$);
\item[$\bullet$]
$\,\|A\|_\ess\ :=\ \inf_{K\in\CC(H)}\|A-K\|\,$ is the distance from
$\,A\,$ to $\,\CC(H)\,$;
\item[$\bullet$]
$\,F\,$ is the function defined by \eqref{h0}.
\end{enumerate}

\subsection{Matrices}\label{S3.matrices}

Let $L$ be the linear space of all complex $\,m\times m$ matrices.
Then the Schatten norms $\,\|\cdot\|_p\,$ on $\,L\,$ satisfy
\eqref{main-4} with $\,C_\star=m^{1/p}\,$. Corollary
\ref{cor:main-2} implies that
\begin{equation}\label{matrix-norm}
\inf_{T\in L_\nR}\|A-T\|\ \leq\ \inf_{T\in
L_\nR\,:\,\|T\|\leq\|A\|}\|A-T\|\ \leq\ F\bigl(\|[A^*,A]\|\bigr)
\end{equation}
and
\begin{equation}\label{matrix-schatten}
\inf_{T\in L_\nR}\|A-T\|_p\ \leq\ \inf_{T\in
L_\nR\,:\,\|T\|\leq\|A\|}\|A-T\|_p\ \leq\
m^{1/p}\,F\bigl(m^{-1/p}\|[A^*,A]\|_p\bigr)
\end{equation}
for all $\,p\in[1,\infty)\,$, all $\,m=1,2,\ldots\,$ and all $\,A\in
L\,$ such that $\,\|A\|\leq1\,$.

Note that the $\,\SC_2$-distance from a given $\,m\times m$-matrix
$\,A\,$ to the set of normal matrices admits the following simple
description.

\begin{lemma}\label{lem:num-range}
Let $\,A\,$ be an $\,m\times m$-matrix, and let $\,\Si_m(A)\,$ be
the set of all complex vectors $\,\zbf\in\C^m\,$ of the form
$$
\zbf\ =\ \{(Au_1,u_1),(Au_2,u_2)\ldots,(Au_m,u_m)\}
$$
where $\,\{u_1,u_2,\ldots,u_m\}\,$ is an orthonormal basis. Then
\begin{equation}\label{matrices-1}
\inf_{T\in L_\nR}\|A-T\|_2^2\ =\ \inf_{T\in
L_\nR\,:\,\|T\|\leq\|A\|}\|A-T\|_2^2\ =\
\|A\|_2^2\;-\sup_{\zbf\in\Si_m(A)}|\zbf|^2\,.
\end{equation}
\end{lemma}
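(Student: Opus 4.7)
The plan is to parameterize each normal matrix $\,T\in L_\nR\,$ by its spectral data and carry out the minimization of $\,\|A-T\|_2^2\,$ explicitly.

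First I would write an arbitrary $\,T\in L_\nR\,$ in the form $\,T=\sum_{k=1}^m\lambda_k\,u_ku_k^*\,$, where $\,\{u_k\}_{k=1}^m\,$ is an orthonormal basis of eigenvectors and $\,\lambda_k\in\C\,$. Setting $\,z_k:=(Au_k,u_k)\,$ and expanding
\begin{equation*}
\|A-T\|_2^2\ =\ \Tr(A^*A)-\Tr(A^*T)-\Tr(T^*A)+\Tr(T^*T)
\end{equation*}
via $\,\Tr(A^*u_ku_k^*)=u_k^*A^*u_k=\overline{z_k}\,$ and $\,\Tr(u_ku_k^*)=1\,$, I would arrive at the algebraic identity
\begin{equation*}
\|A-T\|_2^2\ =\ \|A\|_2^2\;-\sum_{k=1}^m|z_k|^2\;+\sum_{k=1}^m|\lambda_k-z_k|^2.
\end{equation*}

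Next, with the basis $\,\{u_k\}\,$ fixed, the right-hand side is minimized over $\,\lambda_k\in\C\,$ by the choice $\,\lambda_k:=z_k\,$, which makes the last sum vanish and yields $\,\|A-T\|_2^2=\|A\|_2^2-|\zbf|^2\,$ with $\,\zbf=(z_1,\ldots,z_m)\in\Si_m(A)\,$. Taking the supremum over orthonormal bases then gives the rightmost equality of \eqref{matrices-1}, and the supremum is attained because $\,\Si_m(A)\,$ is the continuous image of the compact unitary group.

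For the middle equality, I would observe that the optimal eigenvalues satisfy $\,|\lambda_k|=|(Au_k,u_k)|\leq\|A\|\,$ by Cauchy--Schwarz, so the minimizing normal matrix automatically obeys $\,\|T\|=\max_k|\lambda_k|\leq\|A\|\,$. Hence the unconstrained infimum on the left agrees with the constrained infimum in the middle at no cost. There is no genuine obstacle: the result reduces to a direct trace computation combined with a Cauchy--Schwarz bound on the numerical range. The only point requiring care is the bookkeeping of complex conjugates in the trace, which is what forces the definition of $\,\Si_m(A)\,$ via $\,(Au_k,u_k)\,$ rather than $\,(u_k,Au_k)\,$.
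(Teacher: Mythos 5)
Your proof is correct and is essentially the same argument as the paper's: both compute $\,\|A-T\|_2^2\,$ in the eigenbasis of $\,T\,$, observe that the off-diagonal contribution equals $\,\|A\|_2^2-|\zbf|^2\,$, choose the eigenvalues to be the diagonal entries $\,(Au_k,u_k)\,$ of $\,A\,$ to kill the remaining term, and use Cauchy--Schwarz together with compactness of $\,\Si_m(A)\,$ to control $\,\|T\|\,$ and attain the supremum. The only cosmetic difference is that you organize the computation via traces while the paper reads off Hilbert--Schmidt entries directly.
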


\begin{proof}
If $\,T\,$ is an arbitrary normal matrix and
$\,\{u_1,u_2,\ldots,u_m\}\,$ is the basis formed by its eigenvectors
then
\begin{equation}\label{matrices-2}
\|A-T\|_2^2\ \geq\ \sum_{j\neq k}\left|(A u_j, u_k)\right|^2\ =\ \|
A \|_2^2 - \sum_{j=1}^m\left|(A u_j, u_j)\right|^2\ \geq\
\|A\|_2^2\;-\sup_{\zbf\in\Si_m(A)}|\zbf|^2\,.
\end{equation}
Therefore $\,\inf\limits_{T\in L_\nR}\|A-T\|_2^2\ \geq\
\|A\|_2^2\;-\sup\limits_{\zbf\in\Si_m(A)}|\zbf|^2\,$.

On the other hand, since the set $\,\Si_m(A)\,$ is compact, we have
$\,\sup_{\zbf\in\Si_m(A)}|\zbf|^2=|\zbf_0|^2\,$ for some
$\,\zbf_0\in\Si_m(A)\,$. Let us write down the matrix $\,A\,$ in a
corresponding orthonormal basis $\,\{v_1,\ldots,v_m\}\,$ and denote
by $\,T_0\,$ the normal matrix obtained by removing the off-diagonal
elements. Then $\,\|T_0\|\leq\|A\|\,$ and
$\,\|A-T_0\|_2^2=\sum_{j\neq
k}\left|(Av_j,v_k)\right|^2=\|A\|_2^2-|\zbf_0|^2\,$.
\end{proof}

The following example shows that for $\,p=2\,$ the estimate
\eqref{matrix-schatten} is order sharp as $\,m\to\infty\,$.

\begin{example}\label{ex:h-s}
Let $\,m\,$ be even, and let $\,\{e_j\}_{j=1}^m\,$ be the standard
Euclidean basis in $\,\C^m\,$. Consider the $\,m\times m$-matrix
\begin{equation*}
A = \begin{pmatrix} 0 & 1& 0 & 0 & \dots & 0 & 0 \\
0 & 0 & 0 & 0 & \dots & 0 & 0 \\
0 & 0 & 0 & 1 & \dots & 0 & 0 \\
0 & 0 & 0 & 0 & \dots & 0 & 0 \\
. & . & . & . & \dots & . & . \\
0 & 0 & 0 & 0 & \dots & 0 & 1 \\
0 & 0 & 0 & 0 & \dots & 0 & 0 \end{pmatrix}
\end{equation*}
defined by the identities $\,Ae_{2i}=e_{2i-1}\,$ and
$\,Ae_{2i-1}=0\,$, where $\,i=1,\ldots,m/2\,$. By direct
calculation, $\,\|A\|=1\,$ and
$\,[A,A^*]=\diag(1,-1,\ldots,1,-1,)\,$, so that $\,\|[A,
A^*]\|_2=m^{1/2}\,$.

For this matrix $\,A\,$,
$$
\inf_{T\in L_\nR}\|A-T\|_2\ =\ (m/4)^{1/2}\ =\
m^{1/2}\left(2^{-1}m^{-1/2}\|[A, A^*]\|_2\right)\,.
$$
Indeed, let $\,\{u_j\}_{j=1}^m\,$ be an orthonormal basis
 in $\,\C^m\,$. Then $\,u_j=\sum_{i=1}^{m/2}
(\alpha_{i,j}\,e_{2i-1}+\beta_{i,j}\,e_{2i})\,$, where
$\,\alpha_{i,j}\,$ and $\,\beta_{i,j}\,$ are complex numbers such
that $\,\sum_{i=1}^{m/2}\left( |\alpha_{i,j}|^2+|\beta_{i,j}|^2
\right)= 1\,$. Clearly,
$$
(A u_j, u_j)\ =\ \sum_{i=1}^{m/2}
 \left(\beta_{i,j}\, e_{2i-1}, u_j \right)\
 =\ \sum_{i=1}^{m/2} \beta_{i,j} \, \overline{\alpha_{i,j}}\,.
$$
Therefore, $\,2\left| (A u_j, u_j) \right|\leq\sum_{i=1}^{m/2}
\left(|\alpha_{i,j}|^2+|\beta_{i,j}|^2 \right)=1\,$ and
$\,\sum_{j=1}^m\left|(A u_j, u_j)\right|^2\leq m/4\,$. Thus we have
$\,|\zbf|^2\leq m/4\,$ for all $\,\zbf\in\Si_m(A)\,$. Since
$\,\|A\|_2^2=m/2\,$, Lemma \ref{lem:num-range} implies that
$\,\|A-T\|_2^2\geq m/4\,$ for all normal matrices $\,T\,$. On the
other hand, if $\,T_0=\re A\,$ then $\,\|A-T_0\|_2^2=\|\im
A\|_2^2=m/4\,$.
\end{example}

\begin{remark}\label{rem:lower-bound}
If $\,T\,$ is a normal matrix and $\,A\,$ is an arbitrary matrix of
the same size then
$$
[A^*,A]\ =\ [A^*,T]+[A^*,A-T]\ =\ [(A-T)^*,T]+[A^*,(A-T)]\,.
$$
Estimating the Schatten norms of the right and left hand sides, we
obtain
$$
\|[A^*,A]\|_p\ \leq\ 2\left(\|A\|+\|T\|\right)\|A-T\|_p\,.
$$
This implies that
\begin{equation}\label{lower-bound}
\inf_{T\in L_\nR\,:\,\|T\|\leq\|A\|}\|A-T\|_p\ \geq\
\frac{\|[A^*,A]\|_p}{4\|A\|}
\end{equation}
for all finite matrices $\,A\,$ and all $\,p\in[1,\infty]\,$.
\end{remark}

Substituting the matrix $\,A\,$ from Example \ref{ex:h-s} into
\eqref{lower-bound}, we see that the second estimate
\eqref{matrix-schatten} is order sharp as $\,m\to\infty\,$ for all
$\,p\in[1,\infty)\,$.

\subsection{Bounded and compact operators}\label{S3.bounded-compact}

If $\,L\,$ is the $\,C^*$-algebra obtained from $\,\CC(H)\,$ by
adjoining the unity then, by Remark \ref{rem:l0-1},
$\,L^{-1}=L_0^{-1}\,$ and all $\,A\in L\,$ satisfy the condition
{\bf(C)}. Thus our main results hold for all compact operators
$\,A\,$.

If $\,L=\BC(H)\,$ then $\,L^{-1}=L_0^{-1}\,$ because every unitary
operator can be joined with $\,I\,$ by the path $\,\exp(it\Arg U)\,$
(see Lemma \ref{lem:l0-2}). However, in the infinite dimensional
case $\,\overline{L^{-1}}\ne\BC(H)\,$. The following result was
obtained in \cite{FK} (it also follows from \cite[Theorem 4.1]{CL} or
\cite[Theorem 3]{Bo1}).

\begin{lemma}\label{lem:Bo1} Let $\,H\,$ be separable, and let $\,L=\BC(H)\,$. Then
an operator $\,A\,$ satisfies the condition {\bf(C)} if and only if
for each $\,\la\in\C\,$ either the range $\,(A-\la I)H\,$ is not
closed or $\,\dim\ker(A-\la I)=\dim\ker(A^*-\bar\la I)\,$.
\end{lemma}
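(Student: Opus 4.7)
The excerpt has just observed that $\,L^{-1}=L_0^{-1}\,$ for $\,L=\BC(H)\,$, so condition {\bf(C)} reduces to $\,A-\la I\in\overline{L^{-1}}\,$ for every $\,\la\in\C\,$. The lemma is therefore equivalent to the following Feldman--Kadison type characterization applied to $\,T=A-\la I\,$: one has $\,T\in\overline{L^{-1}}\,$ if and only if either $\,TH\,$ is not closed or $\,\dim\ker T=\dim\ker T^*\,$. I would prove the two implications of this characterization separately.

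For the sufficiency direction I would work with the polar decomposition $\,T=V|T|\,$ and the spectral resolution of $\,|T|\,$. Given $\,\eps>0\,$, split $\,H=H_1\oplus H_2\,$ with $\,H_1:=E_{|T|}([0,\eps))H\,$ and $\,H_2:=E_{|T|}([\eps,\infty))H\,$, and likewise $\,H=K_1\oplus K_2\,$ via $\,|T^*|\,$. The identity $\,|T^*|^2=V|T|^2V^*\,$ together with Borel functional calculus yields $\,VE_{|T|}(\Omega)=E_{|T^*|}(\Omega)V\,$ for every Borel $\,\Omega\subset(0,\infty)\,$, so the partial isometry $\,V\,$ maps $\,H_2\,$ isometrically onto $\,K_2\,$ and $\,T\restriction H_2:H_2\to K_2\,$ is invertible. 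If a unitary $\,W:H_1\to K_1\,$ can be constructed, then
\[
T_\eps\ :=\ T\,P_{H_2}+\eps\,W\,P_{H_1}
\]
is a bounded bijection of $\,H\,$ (using $\,K_1\perp K_2\,$), hence invertible by the open mapping theorem, and $\,\|T-T_\eps\|\leq\||T|\restriction H_1\|+\eps\leq 2\eps\,$. The only point to verify is $\,\dim H_1=\dim K_1\,$. When $\,TH\,$ is closed, $\,|T|\,$ has closed range, so $\,\si(|T|)\cap(0,\eps)=\varnothing\,$ for small $\,\eps\,$, giving $\,H_1=\ker T\,$ and $\,K_1=\ker T^*\,$, and the hypothesis supplies the isomorphism. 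When $\,TH\,$ is not closed, $\,0\,$ is an accumulation point of $\,\si(|T|)\,$, which forces $\,E_{|T|}((0,\eps))H\,$ and (via the intertwining) $\,E_{|T^*|}((0,\eps))H\,$ to be infinite-dimensional; separability of $\,H\,$ then makes both $\,H_1\,$ and $\,K_1\,$ countably infinite-dimensional, so $\,W\,$ exists irrespective of the kernel sizes.

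For necessity I argue by contraposition: if $\,TH\,$ is closed and $\,\dim\ker T\neq\dim\ker T^*\,$, then, since $\,H\,$ is separable and both dimensions (were they infinite) would equal $\,\aleph_0\,$ and coincide, one of the kernels must be finite. Hence $\,T\,$ is semi-Fredholm of nonzero index. The classical fact that the semi-Fredholm class is open in $\,\BC(H)\,$ with locally constant index supplies a neighbourhood of $\,T\,$ in which every operator has the same nonzero index; as invertible operators have index $\,0\,$, this gives $\,T\notin\overline{L^{-1}}\,$.

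The technical heart of the proof is the sufficiency side: verifying that $\,T_\eps\,$ is invertible depends on the two identities $\,VH_2=K_2\,$ and $\,K_1\perp K_2\,$, both of which follow from the functional-calculus intertwining $\,VE_{|T|}(\Omega)=E_{|T^*|}(\Omega)V\,$. The non-closed-range case additionally requires the short spectral-theoretic observation that an accumulation of $\,\si(|T|)\,$ at $\,0\,$ forces infinite-dimensional spectral subspaces near $\,0\,$. The necessity direction, by contrast, is textbook Fredholm theory and poses no difficulty.
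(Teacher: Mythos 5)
Your proof is correct. Note, however, that the paper does not prove Lemma \ref{lem:Bo1} at all: it simply cites the result to Feldman--Kadison [FK], Coburn--Lebow [CL], and Bouldin [Bo1], so there is no ``paper's own proof'' to compare with. Your argument is essentially the standard one found in those references: one reduces via the polar decomposition $T=V|T|$ and the intertwining relation $VE_{|T|}(\Omega)=E_{|T^*|}(\Omega)V$ (valid for Borel $\Omega\subset(0,\infty)$, coming from $TT^*=V|T|^2V^*$) to matching up the low-spectral subspaces $H_1$ and $K_1$, using separability and the dichotomy between closed and non-closed range to guarantee $\dim H_1=\dim K_1$; necessity is the usual semi-Fredholm index obstruction. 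All the steps check out: the spectral-multiplicity observation that a non-closed range forces $E_{|T|}((0,\eps))H$ to be infinite-dimensional is sound (a finite-dimensional such subspace would create a gap in $\si(|T|)$ above $0$), and the contrapositive direction correctly uses the openness of the semi-Fredholm class and the local constancy of the index. The one cosmetic point worth making explicit is that in the closed-range case you need $\eps$ smaller than the bottom of $\si(|T|)\setminus\{0\}$; you do say ``for small $\eps$,'' so this is only a matter of emphasis.
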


In other words, Lemma \ref{lem:Bo1} states that in the separable
case {\bf(C)} is equivalent to the condition on the index function
in the BDF theorem. In particular, this implies that normal
operators and their compact perturbations satisfy the condition
{\bf(C)}.

\begin{remark}\label{Bo2}
An explicit description of the closure of the set of invertible
operators in a nonseparable Hilbert space was obtained in
\cite{Bo2}.
\end{remark}

\subsection{The BDF theorem}\label{S3.bdf}

In this subsection we are always assuming that $\,H\,$ is separable
and $\,L=\BC(H)\,$.

Recall that an operator $\,A\in\BC(H)\,$ is called {\it
quasidiagonal\/} if it can be represented as the sum of a block
diagonal and a compact operator, that is, if there exist mutually
orthogonal finite dimensional subspaces $\,H_k\,$ and operators
$\,S_k:H_k\mapsto H_k\,$ such that $\,H=\bigoplus_{k=1}^\infty
H_k\,$ and $\,A=\diag\{S_1,S_2,\ldots\}+K\,$, where
$\,K\in\CC(H)\,$.

We shall need the following well known result.

\begin{lemma}\label{lem:quasidiag}
The set of compact perturbations of normal operators on a separable
Hilbert space is norm closed and coincides with the set of
quasidiagonal operators $\,S\in\BC(H)\,$ such that
$\,[S^*,S]\in\CC(H)\,$.
\end{lemma}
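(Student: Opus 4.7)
The plan is to establish the characterisation first and deduce norm-closedness from closedness of each of the two conditions on the right-hand side.

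For the inclusion $(\subseteq)$, suppose $A=N+K$ with $N$ normal and $K\in\CC(H)$. Since $[N^*,N]=0$,
\begin{equation*}
[A^*,A]\ =\ [N^*,K]+[K^*,N]+[K^*,K]\ \in\ \CC(H).
\end{equation*}
Applying Theorem \ref{thm:berg} to $N$ gives a diagonal operator $D$ and a compact $K'$ with $N=D+K'$, so $A=D+(K+K')$ is a compact perturbation of a (block) diagonal operator, hence quasidiagonal.

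For the converse $(\supseteq)$, let $A$ be quasidiagonal with $[A^*,A]\in\CC(H)$, and write $A=B+K_0$ with $B=\diag\{S_k\}_{k\geq 1}$ acting on $H=\bigoplus_{k\geq 1}H_k$ ($\dim H_k<\infty$) and $K_0\in\CC(H)$. Then
\begin{equation*}
[B^*,B]\ =\ [A^*,A]-[B^*,K_0]-[K_0^*,B]-[K_0^*,K_0]\ \in\ \CC(H),
\end{equation*}
and since $[B^*,B]=\diag\{[S_k^*,S_k]\}$ is block diagonal, compactness forces $\|[S_k^*,S_k]\|\to 0$ (otherwise unit vectors $u_{k_j}\in H_{k_j}$ would converge weakly to zero while $[B^*,B]u_{k_j}$ stayed bounded below in norm, contradicting compactness of $[B^*,B]$). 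Setting $c:=\max(1,\|B\|)$ and applying Theorem \ref{thm:huaxin} to the finite-rank operators $S_k/c$ on $H_k$ yields normal matrices $T_k$ on $H_k$ with
\begin{equation*}
\|S_k-T_k\|\ \leq\ c\,F_{\mathrm{Lin}}\bigl(\|[S_k^*,S_k]\|/c^2\bigr),
\end{equation*}
where $F_{\mathrm{Lin}}$ denotes the function from Theorem \ref{thm:huaxin}. Continuity of $F_{\mathrm{Lin}}$ at $0$ forces $\|S_k-T_k\|\to 0$, so $N:=\diag\{T_k\}$ is a bounded normal operator and $B-N=\diag\{S_k-T_k\}$ is a block-diagonal operator whose block norms tend to zero, hence compact. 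Therefore $A=N+(K_0+B-N)$ is a compact perturbation of the normal operator $N$.

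Norm-closedness now follows from closedness of each of the two conditions on the right-hand side. If $A_n\to A$ in norm and each $[A_n^*,A_n]\in\CC(H)$, then $[A_n^*,A_n]\to[A^*,A]$, and closedness of the ideal $\CC(H)$ yields $[A^*,A]\in\CC(H)$. For quasidiagonality, invoke the standard Halmos characterisation: $A$ is quasidiagonal iff for each finite $\{v_1,\ldots,v_\ell\}\subset H$ and each $\eps>0$ there exists a finite-rank projection $P$ with $\|Pv_i-v_i\|<\eps$ for every $i$ and $\|[A,P]\|<\eps$. Given such data for a norm-limit $A$, choose $n$ with $\|A-A_n\|<\eps/4$ and use quasidiagonality of $A_n$ to obtain $P$ satisfying the vector condition and $\|[A_n,P]\|<\eps/2$; then $\|[A,P]\|\leq\|[A_n,P]\|+2\|A-A_n\|<\eps$.

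The main obstacle is the $(\supseteq)$ direction. The delicate point is converting the global hypothesis $[A^*,A]\in\CC(H)$ into the block-level decay $\|[S_k^*,S_k]\|\to 0$, which first requires absorbing $K_0$ into the self-commutator and then exploiting that a compact block-diagonal operator has vanishing block norms. Once this is secured, Theorem \ref{thm:huaxin} produces the block-normal replacements $T_k$, and continuity of $F_{\mathrm{Lin}}$ at the origin is precisely what guarantees that $B-N$ assembles into a compact operator.
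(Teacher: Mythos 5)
Your proof is correct and follows the route the paper indicates. The paper gives no argument of its own for this lemma --- it only remarks that the result ``admits a simple independent proof based on Theorem~\ref{thm:huaxin}'' and cites Friis--R{\o}rdam, Proposition~2.8 --- and your write-up (Huaxin Lin's theorem to repair the diagonal blocks after absorbing $K_0$ into the self-commutator, Berg's theorem for the forward inclusion, and the Halmos projection criterion for norm-closedness) is exactly a fleshed-out version of that suggested proof.
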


Lemma \ref{lem:quasidiag} follows from the BDF theorem but it also
admits a simple independent proof based on Theorem \ref{thm:huaxin}
(see \cite[Proposition 2.8]{FR2}). Obviously, the BDF theorem is an
immediate consequence of Corollary \ref{cor:normal-1} and Lemma
\ref{lem:quasidiag}. One obtains a slightly better result by
applying the following lemma, which shows that the BDF theorem holds
with a normal operator $\,T\,$ such that $\,\|T\|\leq\|A\|\,$.

\begin{lemma}\label{lem:quasidiag1}
Let $\,H\,$ be separable, and let $\,L=\BC(H)\,$. Then, for each
fixed $\,r>0\,$, the set $\,B(r)\cap L_n+\CC(H)\,$ is closed and
coincides with the set of quasidiagonal operators
$\,\diag\{S_1,S_2,\ldots\}+K\,$ such that $\,K\in\CC(H)\,$,
$\,S_k\,$ are normal and $\,\|S_k\|\leq r\,$ for all $\,k\,$.
\end{lemma}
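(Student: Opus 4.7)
I would split the lemma into two parts: the set identity describing $\BR(r)\cap L_\nR+\CC(H)$, and its norm closedness.

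The inclusion $\supseteq$ in the set identity is immediate, since if $A=\diag\{S_1,S_2,\ldots\}+K$ with $\|S_k\|\leq r$ and each $S_k$ normal, then $\diag\{S_1,S_2,\ldots\}$ is itself normal of norm $\sup_k\|S_k\|\leq r$. For the reverse inclusion, take $A=T+K$ with $T$ normal, $\|T\|\leq r$, and $K$ compact, and apply Theorem~\ref{thm:berg} to write $T=D+K_0$ with $D=\diag\{d_k\}$ in some orthonormal basis $\{e_k\}$ and $K_0$ compact. Since $|d_k|=\|De_k\|\leq\|Te_k\|+\|K_0e_k\|\leq r+\|K_0e_k\|$ and $\|K_0e_k\|\to 0$ by compactness of $K_0$, the excess $(|d_k|-r)_+$ tends to zero. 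Radially truncating each $d_k$ to the closed disc of radius $r$ yields a diagonal operator $D'$ with $\|D'\|\leq r$ such that $D-D'$ is diagonal with entries tending to zero, hence compact, and $A=D'+(K+K_0+(D-D'))$ exhibits $A$ in the required form with $1\times 1$ blocks.

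For closedness, suppose $A_n\in\BR(r)\cap L_\nR+\CC(H)$ converges in norm to $A$. Passing to the Calkin algebra via the quotient map $\pi$, each $\pi(A_n)$ is normal of norm at most $r$, hence so is $\pi(A)$. In particular $[A^*,A]\in\CC(H)$, and by Lemma~\ref{lem:quasidiag} there exist a normal $T$ and a compact $K'$ with $A=T+K'$. The essential spectrum of $T$ equals $\sigma(\pi(A))\subset\{z\in\C:|z|\leq r\}$, so $\sigma(T)\setminus\{|z|\leq r\}$ consists only of isolated eigenvalues of finite multiplicity that can accumulate only on the circle $\{|z|=r\}$. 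Setting $T':=f(T)$ with $f(z)=z$ for $|z|\leq r$ and $f(z)=rz/|z|$ for $|z|>r$ produces a normal operator of norm $\leq r$; the difference $T-T'$, vanishing on the spectral subspace for $\{|z|\leq r\}$ and of modulus $|z|-r\to 0$ along the remaining spectrum, is compact. Hence $A=T'+(K'+(T-T'))\in\BR(r)\cap L_\nR+\CC(H)$.

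The main obstacle is arranging both truncations so that the discarded pieces are compact. In the first step this rests on compactness of $K_0$ forcing $\|K_0e_k\|\to 0$ along the Berg basis, which makes $D-D'$ a compact diagonal. In the second step it rests on the bound $\|\pi(A)\|\leq r$ confining $\sigma_{\mathrm{ess}}(T)$ to the disc of radius $r$, so that the eigenvalues of $T$ outside this disc may accumulate only on its boundary, keeping the spectral truncation compact.
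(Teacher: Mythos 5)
Your proof is correct, but it takes a genuinely different route from the paper's. The paper starts from an arbitrary $A$ in the norm closure of $\BR(r)\cap L_\nR+\CC(H)$, applies Lemma~\ref{lem:quasidiag} to obtain a block-diagonal decomposition $A=\diag\{S'_1,S'_2,\ldots\}+K'$, then fixes each block: first it invokes the matrix estimate \eqref{matrix-norm} (a corollary of the main theorem) to replace each $S'_k$ by a nearby normal $S''_k$, then it rescales each $S''_k$ into the ball of radius $r$. Both corrections produce block-diagonal compact errors, and the closure and the description of the set are obtained in a single pass. You instead split the claim into two halves. For the set identity you bypass the finite-dimensional normal approximation entirely and use the Weyl--von Neumann--Berg theorem to diagonalize the normal part $T$; the fact that $\|K_0e_k\|\to0$ along the Berg basis (because $e_k$ tends weakly to zero and $K_0$ is compact) makes the radial truncation of the diagonal entries a compact correction, and you even get $1\times1$ blocks. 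For closedness you pass to the Calkin algebra, observe that the image of a norm limit of normal elements of norm $\leq r$ is normal of norm $\leq r$, deduce from the closedness statement in Lemma~\ref{lem:quasidiag} that $A=T+K'$ with $T$ normal, and use the confinement $\si_\ess(T)\subset\{|z|\leq r\}$ to show that the spectral truncation $T'=f(T)$ (with $f$ the radial retraction onto the disc of radius $r$) differs from $T$ by a compact operator, since the eigenvalues of $T$ outside the disc are isolated of finite multiplicity and accumulate only on the circle $\{|z|=r\}$. Both arguments ultimately rest on Lemma~\ref{lem:quasidiag}, but yours keeps the norm control at the level of a single normal operator via functional calculus, whereas the paper pushes the control down to the individual finite-dimensional blocks via \eqref{matrix-norm}; your version is somewhat more modular and avoids the explicit appeal to the matrix estimate, while the paper's version illustrates a further use of its central result.
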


\begin{proof}
Obviously, if $\,A=\diag\{S_1,S_2,\ldots\}+K\,$ then $\,A\in
B(r)\cap L_n+\CC(H)\,$ whenever $\,S_k\,$ and $\,K\,$ satisfy the
conditions of the lemma.

Assume now that $\,A\in\overline{B(r)\cap L_n+\CC(H)}\,$. Then
$\,\|A\|_\ess\leq r\,$, $\,[A^*,A]\in\CC(H)\,$ and, by Lemma
\ref{lem:quasidiag}, $\,A=\diag\{S'_1,\,S'_2,\,\ldots\}+K'\,$, where
$\,K'\in\CC(H)\,$ and $\,S'_k\,$ are operators acting in some
mutually orthogonal finite dimensional subspaces $\,H_k\,$ such that
$\,H=\bigoplus_kH_k\,$. Since the self-commutator $\,[A^*,A]\,$ is
compact, we have $\,[(S'_k)^*,S'_k]\to0\,$ as $\,k\to\infty\,$. By
\eqref{matrix-norm}, there are normal operators $\,S''_k:H_k\mapsto
H_k\,$ such that $\,\|S'_k-S''_k\|\to0\,$ as $\,k\to\infty\,$. The
operator $\,\diag\{S'_1-S''_1,\,S'_2-S''_2,\,\ldots\}\,$ is compact,
so that $\,A=\diag\{S''_1,\,S''_2,\,\ldots\}+K''\,$ where
$\,K''\in\CC(H)\,$.

Since $\,\|A\|_\ess\leq r\,$, we have
$\,\limsup_{k\to\infty}\|S''_k\|\leq r\,$. Define
$$
S_k\ :=\
\begin{cases}
S''_k\,,&\text{if}\ \|S''_k\|\leq r\,,\\
r\,\|S''_k\|^{-1}\,S''_k\,,&\text{if}\ \|S''_k\|>r\,.
\end{cases}
$$
Clearly, $\,S_k\,$ are normal and $\,\|S_k\|\leq r\,$. The estimate
for the upper limit implies that
$$
\limsup_{k\to\infty}\|S''_k-S_k\|\ =\
\limsup_{k\to\infty}\left(\|S''_k\|-r\right)_+\ =\ 0\,.
$$
It follows that the operator
$\,\diag\{S''_1-S_1,\,S''_2-S_2,\,\ldots\}\,$ is compact and,
consequently, $\,A=\diag\{S_1,\,S_2,\,\ldots\}+K\,$ where
$\,K\in\CC(H)\,$.
\end{proof}

Theorem \ref{thm:main} and Lemma \ref{lem:quasidiag1} also imply the
following quantitative version of the BDF theorem.

\begin{theorem}\label{thm:bdf1}
Let $\,H\,$ be separable, and let $\,A\in\BC(H)\,$ be an operator
with $\,\|A\|\leq1\,$ satisfying the condition {\bf(C)}.
\begin{enumerate}
\item[(1)]
If $\,[A^*,A]\in\CC(H)\,$ then for each $\,\eps>0\,$ there exists a
diagonal operator $\,T_\eps\in\BC(H)\,$ such that
$\,A-T_\eps\in\CC(H)\,$, $\,\|T_\eps\|\leq\|A\|\,$ and
$\,\|A-T_\eps\|\leq F\bigl(\|[A^*,A]\|\bigr)+\eps\,$.
\item[(2)]
If $\,[A^*,A]\not\in\CC(H)\,$ then for each $\,\eps>0\,$ there
exists a diagonal operator $\,T_\eps\in\BC(H)\,$ such that
$\,\|A-T_\eps\|_\ess\leq 2F\bigl(\|[A^*,A]\|_\ess\bigr)\,$ and
$$
\|A-T_\eps\|\ \leq\ 5F\bigl(\|[A^*,A]\|\bigr)+
3F\Bigl(2F\bigl(\|[A^*,A]\|_\ess\bigr)\Bigr)+\eps\,.
$$
\end{enumerate}
\end{theorem}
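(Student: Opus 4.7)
The plan is to combine Theorem \ref{thm:main} with the Weyl--von Neumann--Berg theorem (Theorem \ref{thm:berg}) and the closedness provided by Lemma \ref{lem:quasidiag1}.

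For part (1), first apply Theorem \ref{thm:main} to $A$: since $[A^*,A]\in\CC(H)$, every element of $M_{[A^*,A]}$ lies in $\CC(H)$, so for each $\sigma>0$ one obtains a normal $T$ with $\|T\|\leq\|A\|$, a compact $S$, and a remainder $R$ with $\|R\|\leq\sigma$ such that $A=T+S+R$ and $\|A-T\|\leq h(\sigma)\|[A^*,A]\|+\sigma$. Choosing $\sigma$ so that this last quantity is at most $F(\|[A^*,A]\|)+\eps/2$ (and so that $\sigma$ is as small as we please), we simultaneously have $\|A-T\|_\ess\leq\sigma$. I would then apply Theorem \ref{thm:berg} to $T$ to get a diagonal $D$ with $T-D$ compact, $\|T-D\|\leq\eps/4$, and, after a mild capping of the diagonal entries, $\|D\|\leq\|T\|\leq\|A\|$. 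Setting $T_\eps:=D$, we have $A-T_\eps = R+(S+(T-D))$, a small-norm part plus a compact part. To upgrade this to exact compactness of $A-T_\eps$ while preserving the norm bound $\leq F(\|[A^*,A]\|)+\eps$, I would iterate the construction on the residual $R$, producing a summable sequence of compact corrections whose limit yields the required $T_\eps$; the existence and properties of the limit are guaranteed by the closedness of $\BR(\|A\|)\cap L_\nR+\CC(H)$ from Lemma \ref{lem:quasidiag1}.

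For part (2), the plan is to reduce to part (1) via an essential-norm approximation. By Example \ref{ex:ideal} with $J=\CC(H)$, there exist normal $N\in\BR(\|A\|)\cap L_\nR$ and compact $K$ such that $R:=A-N-K$ has $\|R\|\leq F(\|[A^*,A]\|_\ess)+\delta$. Then $A':=A-R=N+K$ is a compact perturbation of the normal $N$, so it satisfies {\bf(C)} and $[A'^*,A']$ is compact; by expanding $[A'^*,A']=[N^*,K]+[K^*,N]+[K^*,K]$ and choosing $K$ so that $\|K\|$ is close to $\|A-N\|_\ess$ (rather than merely satisfying the distance bound of Example \ref{ex:ideal}), one obtains $\|[A'^*,A']\|\leq 2F(\|[A^*,A]\|_\ess)$ up to arbitrarily small terms. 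Applying part (1) to a suitable rescaling of $A'$ produces a diagonal $T_\eps$ with $A'-T_\eps$ compact, $\|T_\eps\|\leq\|A'\|$, and $\|A'-T_\eps\|\leq F(2F(\|[A^*,A]\|_\ess))+\delta'$. The essential-norm bound $\|A-T_\eps\|_\ess\leq 2F(\|[A^*,A]\|_\ess)$ then follows from $A-T_\eps=R+(A'-T_\eps)$, the second term being compact; the operator-norm bound assembles from the direct operator-norm normal approximation of $A$ via Theorem \ref{thm:main} (contributing the multiples of $F(\|[A^*,A]\|)$), the bound on $\|R\|$, and the diagonal approximation of $A'$, with the constants $5$ and $3$ emerging from rescaling overhead and triangle-inequality combinations.

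I expect the main obstacle in part (1) to be the passage from ``$A-T_\eps$ approximately compact'' to ``$A-T_\eps$ exactly compact'' while retaining the norm bound $\leq F(\|[A^*,A]\|)+\eps$; this is handled by the iterative Berg-style construction combined with the closedness from Lemma \ref{lem:quasidiag1}. In part (2), the delicate step is establishing $\|[A'^*,A']\|\leq 2F(\|[A^*,A]\|_\ess)$, which requires selecting $K$ in the essential-norm approximation with operator norm close to $\|A-N\|_\ess$ and then carefully controlling the various commutators arising in the compact perturbation $N+K$.
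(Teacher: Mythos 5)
Your outline of Part (2) is broadly in the spirit of the paper, but both parts contain genuine gaps.

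\textbf{Part (1).} The paper does not approximate $A$ by a normal operator and then diagonalize it via Theorem \ref{thm:berg}; instead it uses Corollary \ref{cor:normal-1} and the \emph{closedness} in Lemma \ref{lem:quasidiag1} to obtain an \emph{exact} block-diagonal decomposition $A=\diag\{S_1,S_2,\ldots\}+K$ with $K$ compact and each $S_k$ normal of norm $\leq\|A\|$, and then applies the finite-matrix estimate \eqref{matrix-norm} to the truncations $E_nAE_n$, noting that $\|[(E_nAE_n)^*,E_nAE_n]\|\leq\|[A^*,A]\|+2\de_n$ with $\de_n:=\|K-E_nKE_n\|\to0$. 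That gives simultaneously exact compactness of $A-\tilde T_n$ \emph{and} the sharp norm bound $F(\|[A^*,A]\|+2\de_n)+2\de_n\to F(\|[A^*,A]\|)$. Your version produces $A-T_\eps=R+(\text{compact})$ with $\|R\|\leq\sigma$, and then appeals to an ``iteration on the residual $R$'' to upgrade to exact compactness. This is the crux, and as written it does not go through: $R$ need not be normal or satisfy {\bf(C)}, so the construction cannot simply be applied to $R$; and if you instead reapply Theorem \ref{thm:main} to $A$ with smaller $\sigma$, there is no control on how the successive normal operators $T$ and their Berg diagonalizations $D$ vary, so the ``summable sequence of compact corrections'' does not obviously converge, and closedness of $\BR(\|A\|)\cap L_\nR+\CC(H)$ only guarantees membership, not the norm estimate $\|A-T_\eps\|\leq F(\|[A^*,A]\|)+\eps$. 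Obtaining exact compactness \emph{together with} the sharp norm bound is precisely the nontrivial point, and the paper's block-by-block matrix argument is what resolves it.

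\textbf{Part (2).} Your decomposition $A=N+K+R$ via Example \ref{ex:ideal} with $\|R\|\leq 2F(\|[A^*,A]\|_\ess)$ and the reduction to Part (1) for $A':=\frac13(N+K)$ matches the paper. However, your commutator estimate is incorrect. You expand $[(A')^*,A']=\frac19\bigl([N^*,K]+[K^*,N]+[K^*,K]\bigr)$ and hope to bound it by $2F(\|[A^*,A]\|_\ess)$ by ``choosing $K$ so that $\|K\|$ is close to $\|A-N\|_\ess$''. But $\|K\|$ is not controlled by $F(\|[A^*,A]\|_\ess)$: only $\|S+K\|=\|A-R\|\leq 3$ and $\|R\|$ are bounded, and nothing in Example \ref{ex:ideal} lets one pick the compact part $K$ with small operator norm. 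The paper instead writes $3A'=A-R$, expands $[A^*,A]$ in terms of $[(A')^*,A']$ and commutators with $R$, and obtains
$\|[(A')^*,A']\|\leq\frac19\bigl(\|[A^*,A]\|+2\|R\|\,\|S+K\|+2\|A\|\,\|R\|\bigr)\leq\|[A^*,A]\|+2F(\|[A^*,A]\|_\ess)$,
which crucially retains the $\|[A^*,A]\|$ term that your estimate omits; that term, split via concavity of $F$, is exactly where the $F(\|[A^*,A]\|)$ contribution to $\|K'_\eps\|$ comes from. Consequently your claimed bound $\|A'-T_\eps\|\leq F\bigl(2F(\|[A^*,A]\|_\ess)\bigr)+\delta'$ is missing $F(\|[A^*,A]\|)$, and the final ``assembly'' with the constants $5$ and $3$ is not actually derived. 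The correct bookkeeping is
$\|A-3T'_\eps\|\leq\|R\|+3\|K'_\eps\|\leq 2F(\|[A^*,A]\|_\ess)+3F(\|[A^*,A]\|)+3F\bigl(2F(\|[A^*,A]\|_\ess)\bigr)+\eps$,
and the paper then absorbs $2F(\|[A^*,A]\|_\ess)\leq 2F(\|[A^*,A]\|)$ to get $5F(\|[A^*,A]\|)$.
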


\begin{proof}
Since a block diagonal normal operator is represented by a diagonal
matrix in the orthonormal basis formed by its eigenvectors, it is
sufficient to construct a block diagonal normal $\,T_\eps\,$
satisfying the above conditions.

Assume first that $\,[A^*,A]\in\CC(H)\,$. Then, by Corollary
\ref{cor:normal-1} and Lemma \ref{lem:quasidiag1}, we have
$\,A=\diag\{S_1,S_2,\ldots\}+K\,$, where $\,K\in\CC(H)\,$ and
$\,S_k\,$ are normal operators in finite dimensional subspaces
$\,H_k\,$ such that $\,\|S_k\|\leq\|A\|\,$. Let us denote by
$\,E_n\,$ the orthogonal projections onto the subspaces
$\,\bigoplus_{k=1}^nH_k\,$ and define $\,\de_n:=\|K-E_nKE_n\|\,$.
Since
$$
[(E_nAE_n)^*, E_nAE_n]\ =\ E_n(A^*E_nA-AE_nA^*)E_n\ =\
E_n(A^*[E_n,A]+[A^*,A]+A[A^*,E_n])E_n\,,
$$
$\,[E_n,A]E_n=[E_n,K]E_n=(E_nKE_n-K)E_n\,$ and
$\,[A^*,E_n]E_n=(K^*-E_nK^*E_n)E_n\,$, we have
$$
\|\left[(E_nAE_n)^*,E_nAE_n\right]\|\ \leq\
\|[A^*,A]\|+2\de_n\,\|A\|\ \leq\ \|[A^*,A]\|+2\de_n\,.
$$
Applying \eqref{matrix-norm} to the finite rank operators
$\,E_nAE_n\,$, we can find normal operators $\,A_n\,$ acting in
$\,E_nH\,$ such that $\,\|A_n\|\leq\|E_nAE_n\|\leq\|A\|\,$ and
$$
\|E_nAE_n-A_n\|\leq F\bigl(\|[A^*,A]\|+2\de_n\bigr)+\de_n\,.
$$
The block diagonal operators $\,\tilde
T_n:=A_n\oplus\diag\{S_{n+1},\,S_{n+2},\,\ldots\}\,$ are normal,
$\,\|\tilde T_n\|\leq\|A\|\,$ and
$$
A-\tilde T_n\ =\ (E_nAE_n-A_n)+(K-E_nKE_n)\,,\qquad\forall
n=1,2,\ldots
$$
The above identity implies that $\,A-\tilde T_n\in\CC(H)\,$ and
$$
\|A-\tilde T_n\|\ \leq\
F\bigl(\|[A^*,A]\|+2\de_n\bigr)+2\de_n\,,\qquad\forall n=1,2,\ldots
$$
Since $\,K\,$ is compact, $\,\lim_{n\to\infty}\de_n=0\,$ and,
consequently, $\,\lim_{n\to\infty}\|A-\tilde
T_n\|=F\bigl(\|[A^*,A]\|\bigr)\,$. Thus we can take
$\,T_\eps:=\tilde T_n\,$ with a sufficiently large $\,n\,$.

Assume now that $\,\|[A^*,A]\|_\ess>0\,$. From \eqref{ideal-1}
with $\,J=\CC(H)\,$ it follows that
$$
A\ =\ S+K+R\,,
$$
where $\,S\,$ is a bounded normal operator, $\,K\in\CC(H)\,$ and
$\,R\,$ is a bounded operator with $\,\|R\|\leq
2\,F\bigl(\|[A^*,A]\|_\ess\bigr)\,$. Since $\,|F|\leq1\,$, we have
$\,\|S+K\|\leq3\,$.

Let $\,A':=\frac13\,(S+K)\,$. Then $\,[(A')^*,A']\in\CC(H)\,$,
$\,\|A'\|\leq1\,$ and, in view of Lemma \ref{lem:Bo1}, $\,A'\,$
satisfies the condition {\bf(C)}. Applying (1) to $\,A'\,$, we can
find a block diagonal normal operator $\,T'_\eps\,$ and a compact
operators $\,K'_\eps\,$ such that $\,A'=T'_\eps+K'_\eps\,$ and
$\,\|K'_\eps\|\leq F\bigl(\|[(A')^*,A']\|\bigr)+\eps/3\,$. The
identities $\,3A'=S+K=A-R\,$ and the above estimates for
$\,\|A\|\,$, $\,\|R\|\,$ and $\,\|S+K\|\,$ imply that
$$
\|[(A')^*,A']\|\ \leq\ \frac19\left(\|[A^*,A]\|+
2\,\|R\|\,\|S+K\|+2\,\|A\|\,\|R\|\right)\ \leq\
\|[A^*,A]\|+2F\bigl(\|[A^*,A]\|_\ess\bigr)\,.
$$

Obviously, $\,A=3T'_\eps+3K'_\eps+R\,$ and
$\,\|A-3T'_\eps\|_\ess=\|R\|_\ess\leq2F\bigl(\|[A^*,A]\|_\ess\bigr)\,$.
Since $\,\|[A^*,A]\|_\ess\leq\|[A^*,A]\|\,$ and the function $\,F\,$
is nondecreasing and concave,  from the above estimates it follows
that
$$
\|K'_\eps\|\ \leq\
F\Bigl(\|[A^*,A]\|+2F\bigl(\|[A^*,A]\|_\ess\bigr)\Bigr)+\frac\eps3\
\leq\
F\bigl(\|[A^*,A]\|\bigr)+F\Bigl(2F\bigl(\|[A^*,A]\|_\ess\bigr)\Bigr)+\frac\eps3
$$
and, consequently,
$$
\|A-3T'_\eps\|\ \leq\ \|R\|+3\,\|K'_\eps\|\ \leq\
5F\bigl(\|[A^*,A]\|\bigr)+
3F\Bigl(2F\bigl(\|[A^*,A]\|_\ess\bigr)\Bigr)+\eps\,.
$$
Thus we can take $\,T_\eps:=3T'_\eps\,$.
\end{proof}

\begin{remark}\label{rem:diagonolize}
Since $\,\eps\,$ can be chosen arbitrarily small, the distance from
an operator $\,A\,$ satisfying the conditions of Theorem
\ref{thm:bdf1} to the set of diagonal operators does not exceed
$\,5F\bigl(\|[A^*,A]\|\bigr)+
3F\Bigl(2F\bigl(\|[A^*,A]\|_\ess\bigr)\Bigr)\,$. If $\,A\,$ is
normal then this sum is equal to zero and Theorem \ref{thm:bdf1}
turns into the Weyl--von Neumann--Berg theorem for bounded
operators.
\end{remark}

\subsection{Truncations of normal operators}\label{S3.truncations}
Let $\,G\,$ be a positive unbounded self-adjoint operator in a
separable Hilbert space $\,H\,$ whose spectrum consists of
eigenvalues of finite multiplicity accumulating to $\,\infty\,$.
Denote its spectral projections corresponding to the intervals
$\,(0,\la)\,$ by $\,P_\la\,$, and let
$$
N(\la):=\rank P_\la\quad\text{and}\quad
N_1(\la):=\sup_{\mu\leq\la}\left(N(\mu)-N(\mu-1)\right).
$$

If $\,B\in\BC(H)\,$ and $\,[G,B]\in\BC(H)\,$ then, according to
\cite[Theorem 1.3]{LS},
\begin{equation}\label{truncations-1}
\|(I-P_\la)BP_\la\|_2^2\ \leq\ \|(I-P_\la)B(P_\la-P_{\la-1})\|_2^2
+\|(I-P_\la)[G,B](G-\la I)^{-1}P_{\la-1}\|_2^2\,.
\end{equation}
A direct calculation shows that $\,\|(G-\la
I)^{-1}P_{\la-1}\|_2^2\leq\frac{\pi^2}6\,N_1(\la)\,$ (see \cite{LS}
for details). This estimate, \eqref{truncations-1} and the obvious
inequality $\,\|P_\la-P_{\la-1}\|_2^2\leq N_1(\la)\,$ imply that
\begin{equation}\label{truncations-2}
\|(I-P_\la)BP_\la\|_2^2\ \leq\
\left(\|B\|^2+\frac{\pi^2}6\,\|[G,B]\|^2\right)\,N_1(\la)\,.
\end{equation}

Let $\,A\in\BC(H)\,$ be a normal operator such that
$\,[G,A]\in\BC(H)\,$, and let $\,A_\la:=P_\la AP_\la\,$ be its
truncation to the subspace $\,P_\la H\,$. Then
$\,[A^*_\la,A_\la]=P_\la A^*(I-P_\la)AP_\la-P_\la
A(I-P_\la)A^*P_\la\,$ and, consequently,
$$
\|[A^*_\la,A_\la]\|_1\ \leq\ \|P_\la A^*(I-P_\la)AP_\la\|_1 +\|P_\la
A(I-P_\la)A^*P_\la\|_1\,.
$$
Since $\,\|P_\la B^*(I-P_\la)BP_\la\|_1=\Tr(P_\la
B^*(I-P_\la)BP_\la)=\|(I-P_\la)BP_\la\|_2^2\,$, applying
\eqref{truncations-2} with $\,B=A\,$ and $\,B=A^*\,$, we obtain
\begin{equation}\label{truncations-3}
\|[A^*_\la,A_\la]\|_1\ \leq\ C_A\,N_1(\la)\,,
\end{equation}
where $\,C_A:=2\,\|A\|^2+\frac{\pi^2}3\,\|[G,A]\|^2\,$. The
inequalities \eqref{matrix-schatten} and \eqref{truncations-3} imply
that
\begin{equation}\label{truncations-4}
\inf_{T_\la}\|A_\la-T_\la\|_1\ \leq\
N(\la)\,F\bigl(C_A\,N_1(\la)/N(\la)\bigr)\,,
\end{equation}
where the infimum is taken over all normal operators $\,T_\la\,$
acting in the finite dimensional subspace $\,P_\la H\,$.

Assume that there exist positive constants $\,c\,$ and
$\,\varkappa\,$ such that
$\,N(\la)=c\la^\varkappa+o(\la^\varkappa)\,$ as $\,\la\to\infty\,$
(that is, we have a Weyl type asymptotic formula for the counting
function $\,N(\la)\,$). Then $\,N_1(\la)/N(\la)\to0\,$ and,
consequently, $\,F\bigl(C_A\,N_1(\la)/N(\la)\bigr)\to0\,$ as
$\,\la\to\infty\,$. Therefore, in view of \eqref{truncations-4},
there exist normal operators $\,\tilde T_\la\,$ acting in the
subspaces $\,P_\la H\,$ such that $\,\|\la^{-\varkappa}A_\la-\tilde
T_\la\|_1\to0\,$ as $\,\la\to\infty\,$. Roughly speaking, this means
that, under the above conditions on $\,A\,$ and $\,N(\la)\,$, the
normalized truncations $\,\la^{-\varkappa}A_\la\,$ are
asymptotically close to normal matrices with respect to the
$\,\SC_1$-norm.

\begin{remark}\label{rem:truncations-1}
The Weyl asymptotic formula holds for elliptic self-adjoint
pseudodifferential operators on closed compact manifolds and
differential operators on domains with appropriate boundary
conditions. If $\,G\,$ is a pseudodifferential operator of order 1
and $\,A\,$ is the multiplication by a smooth function, as in the
Szeg\"o limit theorem \cite{Sz}, then $\,A\,$ and $\,[G,A]\,$ are
bounded in the corresponding space $\,L_2\,$ and we have
\eqref{truncations-4}.
\end{remark}

\begin{remark}\label{rem:truncations-2}
In \cite{LS}, the classical Szeg\"o limit theorem was extended to
wide classes of self-adjoint (pseudo)differential operators $\,G\,$
and $\,A\,$. More precisely, the authors proved that $\,\Tr
f(A_\la)\sim\Tr P_\la f(A)P_\la\,$ as $\,\la\to\infty\,$ for all
sufficiently smooth functions $\,f:\R\mapsto\R\,$ and all
self-adjoint operators $\,G\,$ and $\,A\,$ satisfying the above
conditions. If $\,f:\C\mapsto\C\,$ and the operator $\,A\,$ is
normal then the right hand side of the above asymptotic formula is
well-defined. However, generally speaking, the truncations
$\,A_\la\,$ are not normal matrices and the left hand side does not
make sense. The results of this subsection suggest that similar
limit theorems can be obtained for (almost) normal operators
$\,A\,$, provided that $\,\Tr f(A_\la)\,$ is understood in an
appropriate sense. For instance, it is plausible that the asymptotic
formula holds for all sufficiently smooth functions
$\,f:\C\mapsto\C\,$ if one defines $\,\Tr
f(A_\la):=\sum_jf(\mu_j)\,$, where $\,\mu_j\,$ are the eigenvalues
of $\,A_\la\,$ (see \cite{Sa}).
\end{remark}

\appendix

\section{Resolution of the identity}\label{A}

The proof of Theorem \ref{thm:partition} is based on successive
reductions of the operator $\,A\in L_\nR\,$ to normal operators
whose spectra do not contain certain subsets of the complex plane.
One can think of this process as removing subsets from $\,\si(A)\,$.
After each step we obtain a new normal operator lying in
$\,L_\nR\,$. The main problem is that, in order to carry on the
reduction procedure, one has to ensure that the removal of a subset
$\,\Om\,$ from the spectrum does not change the spectral projection
corresponding to $\,\C\setminus\overline\Om\,$, and that the new
operator still satisfies the condition {\bf(C)}. In our scheme this
is guaranteed by the equality $\,A(I-\Pi_\Om)=A_\Om(I-\Pi_\Om)\,$
and the condition (a$_4$).

Further on
\begin{enumerate}
\item[$\bullet$] $\,\DC_r(\la)\,$ is the open disc of radius $\,r\,$
centred at $\,\la\in\C\,$, and $\,\partial\DC_r(\la)\,$ is its
boundary;
\item[$\bullet$]
$\,\Sbb:=\partial\DC_1(0)\,$ is the unit circle about the origin.
\end{enumerate}

We shall need the following lemmas which will be proved in the next
two subsections.

\begin{lemma}\label{lem:a1}
Let $\,A\in L_\nR\,$, and let $\,\Pi_\Om\,$ be the spectral
projection of $\,A\,$ corresponding to an open set
$\,\Om\subset\C\,$. If $\,\Om\,$ is homeomorphic to the disc
$\,\DC_1(0)\,$ and $\,A-\mu I\in\overline{L_0^{-1}}\,$ for some
$\,\mu\in\Om\,$ then there exists a normal operator $\,R_\Om:\Pi_\Om
H\mapsto \Pi_\Om H\,$ such that
\begin{enumerate}
\item[(a$_1$)]
$\,(A-R_\Om)\Pi_\Om\in L\,$ and, consequently,
$\,A_\Om:=A(I-\Pi_\Om)\oplus R_\Om\in L_\nR\,$;
\item[(a$_2$)]
$\,\si(R_\Om)\subset\partial\Om\,$, so that
$\,\si(A_\Om)\subset\left(\si(A)\setminus\Om\right)\cup\partial\Om\,$;
\item[(a$_3$)]
$\,A_\Om-\la I\in L_0^{-1}\,$ for all $\,\la\in\Om\,$;
\item[(a$_4$)]
if $\,A\,$ satisfies the condition {\bf(C)} then so does the
operator $\,A_\Om\,$.
\end{enumerate}
\end{lemma}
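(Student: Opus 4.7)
My plan is to reduce to the model case $\Om=\DC_1(0)$, $\mu=0$ via the given homeomorphism, construct $R_\Om$ there using an approximation of $A$ by invertible operators in $L_0^{-1}$, and verify (a$_1$)--(a$_4$). For the reduction, choose a homeomorphism $\phi:\overline\Om\to\overline{\DC_1(0)}$ with $\phi(\mu)=0$ (so $\phi(\partial\Om)=\Sbb$ by invariance of boundary) and extend it to a continuous $\Phi:\C\to\C$ such that $\Phi^{-1}(\overline{\DC_1(0)})=\overline\Om$. Then $\Phi(A)\in L_\nR$, its spectral projection onto $\DC_1(0)$ coincides with $\Pi_\Om$, and the hypothesis $A-\mu I\in\overline{L_0^{-1}}$ transfers to $\Phi(A)\in\overline{L_0^{-1}}$ via a straight-line homotopy in $L$. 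Once $\tilde R$ is built for $\Phi(A)$ in the model case, $R_\Om$ is recovered by applying continuous functional calculus to $\tilde R$ through the homeomorphism $\phi^{-1}|_\Sbb:\Sbb\to\partial\Om$.

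In the model case, $A\Pi_\Om$ is normal with spectrum in $\overline{\DC_1(0)}$ and $A\in\overline{L_0^{-1}}$. Approximate $A$ by $A_n\in L_0^{-1}$ and polar-decompose $A_n=U_n|A_n|$, where $U_n\in L_\uR\cap L_0^{-1}$ by Lemma~\ref{lem:l0-2}. The desired unitary $R_\Om$ on $\Pi_\Om H$ is built as an appropriate norm-limit derived from the $U_n$; equivalently, write $A\Pi_\Om=V|A\Pi_\Om|$ and extend the partial isometry $V$ to a unitary on $\Pi_\Om H$ by some unitary on $\ker A\Pi_\Om$. For (a$_1$), the difference $(A-R_\Om)\Pi_\Om=R_\Om(|A\Pi_\Om|-I)\Pi_\Om$ is realized as $f(A)$ for a continuous $f:\C\to\C$ vanishing on $\C\setminus\overline\Om$, and hence lies in $L$. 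Property (a$_2$) is immediate from the unitarity of $R_\Om$, since $\si(A_\Om)=\si(A|_{(I-\Pi_\Om)H})\cup\si(R_\Om)\subset(\si(A)\setminus\Om)\cup\partial\Om$.

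For (a$_3$), the spectrum of $A_\Om$ avoids $\Om$, so $A_\Om-\la I\in L^{-1}$ for every $\la\in\Om$; membership in $L_0^{-1}$ follows from an explicit homotopy that contracts $R_\Om-\la I$ to the identity on $\Pi_\Om H$ via the principal branch of the argument (as in the proof of Lemma~\ref{lem:l0-3}) while simultaneously deforming $A(I-\Pi_\Om)-\la I$ to the identity on $(I-\Pi_\Om)H$. Property (a$_4$) then follows from (a$_3$) together with Lemma~\ref{lem:l0-1}: any $\la\in\C$ can be reached from some $\la_0\in\Om$ by a path in $\widehat L$, so $A_\Om-\la I\in\overline{L_0^{-1}}$. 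The main obstacle is the construction of $R_\Om$ together with the verification of (a$_1$): because the spectral projection $\Pi_\Om$ need not belong to $L$ and there is no continuous retraction $\overline{\DC_1(0)}\to\Sbb$, the partial isometry from the polar decomposition of $A\Pi_\Om$ cannot be realized by continuous functional calculus on $A$ alone. The hypothesis $A-\mu I\in\overline{L_0^{-1}}$ is exactly the homotopy-theoretic data needed to lift this construction into $L$ via the approximations $U_n\in L_\uR\cap L_0^{-1}$, ensuring that $(A-R_\Om)\Pi_\Om\in L$.
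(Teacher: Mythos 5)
Your reduction to the model case $\Om=\DC_1(0)$, $\mu=0$ via a homeomorphism isotopic to the identity, and the observation that this requires showing condition {\bf(C)} is stable under such homeomorphisms, is indeed the structure of the paper's proof (Subsections \ref{A1.2}--\ref{A1.3}). But the heart of the argument — the model case — is left as a sketch with a genuine gap, and the sketch as written contains a false claim.

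The false claim is the sentence ``the difference $(A-R_\Om)\Pi_\Om=R_\Om(|A\Pi_\Om|-I)\Pi_\Om$ is realized as $f(A)$ for a continuous $f:\C\to\C$ vanishing on $\C\setminus\overline\Om$, and hence lies in $L$.'' If $R_\Om$ is the polar partial isometry of $A\Pi_\Om$ extended to a unitary, then this expression is \emph{not} a continuous function of $A$: on the punctured disc the function $z\mapsto\frac{z}{|z|}(|z|-1)$ has no continuous extension to $z=0$, precisely because, as you yourself note two sentences later, the polar isometry cannot be obtained by continuous functional calculus from $A$ alone. Your two statements contradict each other, and the contradiction is exactly where the real work lies. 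What the paper does instead is build a unitary $U_n\in L$ that agrees with $V$ away from the origin but is \emph{not} $V$ near the origin: it sets $S_n:=V\rho_1^2(|A|)+\rho_2(|A|)V_n\rho_2(|A|)$, where $\rho_1,\rho_2$ form a partition of unity supported away from and near the origin respectively, shows $S_n\in L$ (using $V\rho_1(|A|)=A\tilde\rho_1(|A|)$ for a continuous $\tilde\rho_1$), shows $S_n\to V$ in a suitable sense so that $S_n\in L_0^{-1}$ for large $n$, and then takes $R_\eps$ to be $\eps$ times the polar unitary of $S_n$ restricted to $\Pi_\eps H$. Then $(A-R_\eps)\Pi_\eps=A-f_\eps(|A|)U_n$ with $f_\eps(t)=\eps+(t-\eps)_+$, and membership in $L$ is manifest. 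Saying ``$R_\Om$ is built as an appropriate norm-limit derived from the $U_n$'' names the right ingredients but does not produce the gluing; the construction is the content.

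Two further points. For (a$_3$), your proposal to contract each block $R_\Om-\la I$ and $A(I-\Pi_\Om)-\la I$ separately does not obviously stay inside $L$, since $\Pi_\Om\notin L$ in general; the paper instead reads off $A_\eps=f_\eps(|A|)\,U_n\in L_0^{-1}$ directly from the identity above, with $f_\eps(|A|)\geq\eps I$ positive invertible and $U_n\in L_0^{-1}$. For (a$_4$), your claim that ``any $\la\in\C$ can be reached from some $\la_0\in\Om$ by a path in $\widehat L$'' is insufficient as stated: if $\la\in\si(A_\Om)$, or lies in a bounded component of $\C\setminus\si(A_\Om)$ not meeting $\Om$, no path with $A_\Om$ fixed works. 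The paper must vary the operator along with $\la$: it punches a small disc $\DC_\de(\la)$ out of the spectrum of $A$ to get an operator with $\la$ in its resolvent, runs the straight-line homotopy $B_{t,\de}$ from that operator to the corresponding modification of $A_\eps$, checks invertibility at every $t$ by block-diagonal spectral disjointness, and then lets $\de\to0$. That is a nontrivial argument, and omitting it leaves (a$_4$) unproved.
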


In other words, we can remove the set $\,\Om\,$ from $\,\si(A)\,$ by
adding a perturbation which does not change $\,A(I-\Pi_\Om)\,$.
Moreover,
\begin{enumerate}
\item[(a$_5$)]
$\,\|A-A_\Om\|=\|(A-R_\Om)\Pi_\Om\|\leq 2r\,$,
\end{enumerate}
where $\,r\,$ is the radius of the minimal disc containing
$\,\Om\,$. This shows that the perturbation is small whenever
$\,\Om\,$ is a subset of a small disc. However, the new operator
$\,A_\Om\,$ may have additional spectrum lying on $\,\partial\Om\,$.

In view of the above, Lemma \ref{lem:a1} is not sufficient for the
study of operators with one dimensional spectra, as does not allow
one to split the one dimensional spectrum into disjoint components.
This problem is resolved by

\begin{lemma}\label{lem:a2}
Let the conditions of Lemma {\rm\ref{lem:a1}} be fulfilled. Assume,
in addition, that
\begin{enumerate}
\item[(i)]
$\,L\,$ has real rank zero;
\item[(ii)]
$\,\si(A)\cap\Om\,$ is a subset of a simple contour $\,\ga\,$ which
intersects $\,\partial\Om\,$ at two points;
\item[(iii)]
$\,A-\mu I\in L_0^{-1}\,$ for all $\,\mu\in\Om\setminus\ga\,$.
\end{enumerate}
Then there exists a normal operator $\,R_\Om:\Pi_\Om H\mapsto
\Pi_\Om H\,$ satisfying the conditions {\rm(a$_1$), (a$_3$),
(a$_4$)} and
\begin{enumerate}
\item[(a$'_2$)]
$\,\si(R_\Om)\subset\ga\cap\partial\Om\,$, so that
$\,\si(A_\Om)\subset\left(\si(A)\setminus\Om\right)\cup\left(\ga\cap\partial\Om\right)\,$
\end{enumerate}
\end{lemma}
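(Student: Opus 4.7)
The plan is to exploit the extra structure afforded by~(ii) and~(iii) to sharpen Lemma~\ref{lem:a1}: whereas that lemma reduces a two-dimensional spectrum in~$\Om$ to the one-dimensional curve~$\partial\Om$, here we reduce the one-dimensional spectrum $\si(A)\cap\overline\Om\subset\ga_0:=\ga\cap\overline\Om$ to the zero-dimensional set $\{p_1,p_2\}$ of endpoints of~$\ga_0$. The arc $\ga_0$ splits $\Om$ into two open sub-discs $\Om_+,\Om_-$, each homeomorphic to a disc, and by~(iii) together with Lemma~\ref{lem:l0-1}, $A-\mu I\in\overline{L_0^{-1}}$ for every $\mu\in\overline{\Om_+}\cup\overline{\Om_-}=\overline\Om$.

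First I would reduce to the case when $A$ has finite spectrum using Corollary~\ref{cor:FR2} (applicable by~(i) and~{\bf(C)}), approximating $A$ in norm by $A^{(n)}\in L_\fR\cap L_\nR$. For each~$n$, Remark~\ref{rem:projections} supplies spectral projections $E_j^{(n)}\in L$ for every isolated eigenvalue $\mu_j^{(n)}$ of~$A^{(n)}$ lying in~$\Om$. Parametrize $\ga_0$ as $\gamma:[0,1]\to\ga_0$ with $\gamma(0)=p_1,\,\gamma(1)=p_2$, write each $\mu_j^{(n)}$ as close to some $\gamma(t_j^{(n)})$, and split at the midpoint:
\[
P^{(n)}\,:=\,\sum_{t_j^{(n)}\leq1/2}\,E_j^{(n)}\ \in\ L,\qquad R_\Om^{(n)}\,:=\,p_1\,P^{(n)}\,+\,p_2\,(\Pi_\Om^{(n)}-P^{(n)}).
\]
Then $\si(R_\Om^{(n)})\subset\{p_1,p_2\}$, and
\[
(A^{(n)}-R_\Om^{(n)})\,\Pi_\Om^{(n)}\ =\ \sum_j\,(\mu_j^{(n)}-p_{i(j)})\,E_j^{(n)}
\]
is a finite sum of elements of~$L$, establishing (a$_1$) and~(a$'_2$) at the approximate level; properties (a$_3$) and~(a$_4$) for the corresponding $A_\Om^{(n)}$ follow from those of~$A^{(n)}$ via Lemmas~\ref{lem:l0-1} and~\ref{lem:l0-2}, since $A_\Om^{(n)}$ agrees with~$A^{(n)}$ on the orthogonal complement of $\Pi_\Om^{(n)}H$.

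The main obstacle I anticipate lies in the passage $n\to\infty$: although $A^{(n)}\to A$ in norm, the projections $\Pi_\Om^{(n)}$ need not converge in norm if $\si(A)\cap\ga_0$ is uncountable, and condition~(iii) refers to~$A$ rather than to the approximants. The remedy should be to arrange the approximants $A^{(n)}$ so that their spectra lie exactly on~$\ga_0$---this can likely be forced by first applying a continuous functional-calculus retraction of a neighborhood of~$\ga_0$ onto~$\ga_0$, whose well-posedness is ensured by~(iii)---so that the midpoint grouping becomes unambiguous and the sequence $(A^{(n)}-R_\Om^{(n)})\,\Pi_\Om^{(n)}\in L$ is Cauchy in norm, yielding the desired~$R_\Om$ on $\Pi_\Om H$ in the limit together with the operator $A_\Om=A(I-\Pi_\Om)\oplus R_\Om\in L_\nR$ satisfying all four requirements.
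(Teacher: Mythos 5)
Your approach has a fatal circularity. You propose to begin by invoking Corollary~\ref{cor:FR2} to approximate $A$ by operators $A^{(n)}\in L_\fR\cap L_\nR$; but in the paper's logical architecture, Corollary~\ref{cor:FR2} is deduced from Theorem~\ref{thm:partition}, and the proof of Theorem~\ref{thm:partition} in Subsection~\ref{A3} explicitly invokes Lemma~\ref{lem:a2} (the very statement you are trying to prove) repeatedly when removing the edges of the circular polygons $\La_\al$ and the arcs $\ga_{\beta'}$. Indeed, the entire purpose of Lemma~\ref{lem:a2} in this paper is precisely to supply the missing ingredient for a self-contained operator-theoretic proof of the implication $(1)\Rightarrow(3)$ in Corollary~\ref{cor:FR2}, which the authors of~\cite{FR2} had only sketched. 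So you cannot use that corollary here.

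Beyond the circularity, the limiting step you flag as the ``main obstacle'' is genuinely unresolved, and the suggested remedy does not close the gap. A functional-calculus retraction $f$ of a neighbourhood of $\ga_0$ onto $\ga_0$ cannot be applied to $A$ itself without distorting the spectrum outside~$\Om$ (where $\si(A)$ need not be anywhere near $\ga_0$), and it is not clear how to guarantee that the approximants $A^{(n)}$ produced by any finite-spectrum approximation have their eigenvalues in $\Om$ close to~$\ga_0$ in the first place. Moreover, even with spectrum on $\ga_0$, the spectral projections $\Pi_\Om^{(n)}$ and the midpoint groupings $P^{(n)}$ can jump discontinuously as $n$ varies (the spectral projection onto $\Om$ is not continuous in norm as a function of a normal operator), so there is no reason for $(A^{(n)}-R_\Om^{(n)})\Pi_\Om^{(n)}$ to be Cauchy. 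Your proposal also makes no serious use of the strict hypothesis~(iii) ($A-\mu I\in L_0^{-1}$, not merely in the closure), whereas the paper's proof relies on it essentially: after transporting $\si(A)$ onto a closed contour and applying a homeomorphism taking that contour to $\Sbb$ with $0\mapsto-1$, condition~(iii) gives $\varphi(A)\in L_0^{-1}\cap L_\uR$, which is exactly what Lemma~\ref{lem:l0-3} needs in order to produce approximating unitaries $W_n$ avoiding $-1$. Pulling these back gives approximants $B_n$ with $\si(B_n)\cap\DC_1(0)\subset(-1,1)$, and then the polar-decomposition construction (with $\re V_n$ replacing $V_n$ in the formula for $S_n$ from the proof of Lemma~\ref{lem:a1}) forces the restriction of the resulting unitary to $\Pi_\Om H$ to be self-adjoint, hence to have spectrum in $\{-1,1\}$ automatically. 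It is this device, not a grouping of eigenvalues of a finite-spectrum approximant, that produces $R_\Om$ with $\si(R_\Om)\subset\ga\cap\partial\Om$ while keeping $(A-R_\Om)\Pi_\Om$ inside $L$.
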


\subsection{Proof of Lemma \ref{lem:a1}}\label{A1}
The proof proceeds in three steps.

\subsubsection{}\label{A1.1}

Assume first that $\,\Om=\DC_\eps(0)\,$ and $\,\mu=0\,$, that is,
$\,A\in\overline{L_0^{-1}}\,$. For the sake of brevity, we shall
denote $\,\Pi_\eps:=\Pi_{\DC_\eps(0)}\,$,
$\,R_\eps:=R_{\DC_\eps(0)}\,$ and $\,A_\eps:=A_{\DC_\eps(0)}\,$.

Let $\,A=V\,|A|\,$ be the polar decomposition of $\,A\,$. Let us
consider a sequence of operators $\,B_n\in L_0^{-1}\,$ such that
$\,B_n\to A\,$ as $\,n\to\infty\,$, and let $\,B_n=V_n\,|B_n|\,$ be
their polar decompositions. Then $\,|B_n|\to|A|\,$ and,
consequently, $\,V_n\,|A|\to V\,|A|\,$ as $\,n\to\infty\,$. Since
$\,V_n\,|B_n|=|B^*_n|V_n\,$ and $\,|B_n^*|\to|A^*|=|A|\,$, we also
have $\,|A|V_n\to|A|V\,$ as $\,n\to\infty\,$. It follows that
$\,V_n\,\rho(|A|)\to V\rho(|A|)\,$ and $\,\rho(|A|)V_n\to
\rho(|A|)V\,$ as $\,n\to\infty\,$ for every continuous function
$\,\rho:\R_+\mapsto\R\,$ vanishing near the origin.

Let us fix continuous nonnegative functions $\,\rho_1\,$ and
$\,\rho_2\,$ of $\,\R_+\,$ such that $\,\rho_1\equiv1\,$ and
$\,\rho_2\equiv0\,$ on the interval $\,[\eps,\infty)\,$,
$\,\rho_1\equiv0\,$ near the origin, and
$\,\rho_1^2+\rho_2^2\equiv1\,$. Let
$$
S_n\ :=\ V\rho_1^2(|A|) \;+\;\rho_2(|A|)V_n\,\rho_2(|A|)\,.
$$
The operators $\,S_n\,$ belong to $\,L\,$ because $\,V_n\in L\,$
(see Remark \ref{rem:l-1}), $\,\rho(|A|)\in L\,$ for all continuous
functions $\,\rho\,$, and $\,V\rho_1(|A|)=A\,\tilde\rho_1(|A|)\,$
where $\,\tilde\rho_1(\tau):=\tau^{-1}\rho_1(\tau)\,$ is a
continuous function.

Since $\,V\,$ commutes with $\,|A|\,$, we have
$$
S_n-V_n\ =\
(V-V_n)\,\rho_1^2(|A|)-(V-V_n)\left(I-\rho_2(|A|)\right)\rho_2(|A|)
-\left(I-\rho_2(|A|)\right)(V_n-V)\,\rho_2(|A|)\,.
$$
By the above, the right hand side converges to zero as
$\,n\to\infty\,$. Since $\,V_n\in L_0^{-1}\cap L_\uR\,$ (see Lemma
\ref{lem:l0-2}), this implies that $\,S_n\in L_0^{-1}\,$ for all
sufficiently large $\,n\,$.

Let us fix $\,n\,$ such that $\,S_n\in L_0^{-1}\,$ and consider the
polar decomposition $\,S_n=U_n\,|S_n|\,$. By Lemma \ref{lem:l0-2},
$\,U_n\in L_0^{-1}\cap L_\uR\,$. Since
$\,S_n(I-\Pi_\eps)=V(I-\Pi_\eps)\,$, the operator $\,S_n\,$
coincides with the orthogonal sum $\,V(I-\Pi_\eps)\oplus
S_n\Pi_\eps\,$. The unitary operator $\,U_n\,$ has the same block
structure, $\,U_n=V(I-\Pi_\eps)\oplus U_n\Pi_\eps\,$.

Let $\,R_\eps\,$ be the restriction of $\,\eps\,U_n\,$ to the
subspace $\,\Pi_\eps H\,$. Obviously,
$\,\si(R_\eps)\in\partial\DC_\eps(0)\,$. Since
$\,(A-R_\eps)\Pi_\eps= A-f_\eps(|A|)\,U_n\,$, where
$\,f_\eps(t):=\eps+(t-\eps)_+\,$ is a continuous function, the
operator $\,R_\eps\,$ satisfies the condition (a$_1$).

We have $\,A_\eps=A(I-\Pi_\eps)\oplus R_\eps=f_\eps(|A|)\,U_n\,$,
where $\,f_\eps\geq\eps>0\,$ and $\,U_n\in L_0^{-1}\,$. Therefore
$\,A_\eps\in L_0^{-1}\,$ (see Lemma \ref{lem:l0-2}). Since
$\,\si(A_\eps)\cap\DC_\eps(0)=\varnothing\,$, Lemma \ref{lem:l0-1}
implies that $\,A_\eps-\la I\in \overline{L_0^{-1}}\,$ for all
$\,\la\in\overline{\DC_\eps(0)}\,$.

It remains to prove that $\,A_\eps-\la I\in \overline{L_0^{-1}}\,$
for $\,\la\not\in\overline{\DC_\eps(0)}\,$ whenever $\,A\,$
satisfies the condition {\bf(C)}. Let $\,\Pi_{\de,\la}\,$ be the
spectral projection of $\,A\,$ corresponding to the open disc
$\,\DC_\de(\la)\,$ of radius $\,\de<|\la|-\eps\,$. Applying the
above arguments to the operator $\,A-\la I\,$, we can find an
operator $\,R_{\de,\la}\,$ acting in $\,\Pi_{\de,\la} H\,$ such that
$\,\si(R_{\de,\la})\subset\partial\DC_\de(\la)\,$,
$\,(A-R_{\de,\la})\Pi_{\de,\la}\in L\,$ and
\begin{equation}\label{a1}
A-(A-R_{\de,\la})\Pi_{\de,\la}-\la I\ =\ A(I-\Pi_{\de,\la})\oplus
R_{\de,\la}-\la I\ \in\ L_0^{-1}\,.
\end{equation}
Denote
$$
B_{t,\de}\ :=\ \left((1-t)A+tA_\eps\right)\,(I-\Pi_{\de,\la})\oplus
R_{\de,\la}\,.
$$
Since $\,\Pi_\eps\Pi_{\de,\la}=\Pi_{\de,\la}\Pi_\eps=0\,$ and
$\,A_\eps=A-(A-R_\eps)\Pi_\eps\,$, we have
\begin{equation}\label{a2}
B_{t,\de}\ =\ A(I-\Pi_{\de,\la})\oplus
R_{\de,\la}-t(A-R_\eps)\Pi_\eps\ =\ B_\de^{(1)}\oplus
B_t^{(2)}\oplus R_{\de,\la}\,,
\end{equation}
where $\,B_\de^{(1)}:=A(I-\Pi_{\de,\la})(I-\Pi_\eps)\,$ and
$\,B_t^{(2)}:=\left((1-t)A+tR_\eps\right)\Pi_\eps\,$. Obviously,
$\,\si(B_\de^{(1)})\subset\C\setminus\DC_\de(\la)\,$ and
$\,\si(B_t^{(2)})\subset\overline{\DC_\eps(0)}\,$ for all
$\,t\in[0,1]\,$ (because $\,\|B_{t,\de}^{(2)}\|\leq\eps$). Thus the
spectra of all the operators in the orthogonal sum on the right hand
side of \eqref{a2} do not contain the point $\,\la\,$. Therefore the
operator $\,B_{t,\de}-\la I\,$ is invertible. The first equality
\eqref{a2} implies that $\,B_{t,\de}\in L\,$, so we have
$\,B_{t,\de}-\la I\in L^{-1}\,$ for all $\,t\in[0,1]\,$. By
\eqref{a1}, $\,B_{0,\de}-\la I\in L_0^{-1}\,$ and, consequently,
$\,B_{1,\de}-\la I\in L_0^{-1}\,$. Now, letting $\,\de\to0\,$, we
obtain
$$
\lim_{\de\to0}\left(B_{1,\de}-\la I\right)\ =\
\lim_{\de\to0}\left(A_\eps(I-\Pi_{\de,\la})\oplus R_{\de,\la}-\la
I\right)\ =\ A_\eps-\la I\ \in\ \overline{L_0^{-1}}\,.
$$

\subsubsection{}\label{A1.2}
Let $\,B\in L_\nR\,$, and let $\,\varphi:\C\to\C\,$ be a
homeomorphism isotopic to the identity. The results obtained in
Subsection \ref{A1.1} imply that $\,\varphi(B)\,$ satisfies the
condition {\bf(C)} whenever so does the operator $\,B\,$.

Indeed, let us fix $\,\mu\in\C\,$ and consider the isomorphism
$\,\psi:z\mapsto\varphi(z+\varphi^{-1}(\mu))\,$. Denote
$\,A:=B-\varphi^{-1}(\mu)I\,$, and let $\,A_\eps\,$ be the operators
constructed in Subsection \ref{A1.1}. We have
$\,\mu\not\in\si(\psi(A_\eps))\,$ for all $\,\eps>0\,$ because
$\,\psi^{-1}(\mu)=0\not\in\si(A_\eps)\,$. Moreover, since
$\,\varphi\,$ is isotopic to the identity, the same is true for
$\,\psi\,$ and, by Lemma \ref{lem:l0-1}, $\,\psi(A_\eps)-\mu I\in
L_0^{-1}\,$ for all $\,\eps>0\,$. This implies that
$$
\varphi(B)-\mu I\ =\ \psi(A)-\mu I\ =\
\lim_{\eps\to0}\left(\psi(A_\eps)-\mu I\right)\ \in\
\overline{L_0^{-1}}.
$$

\subsubsection{}\label{A1.3}

Assume now that $\,\Om\,$ is an arbitrary domain and $\,\mu\in\Om\,$
is an arbitrary point satisfying the conditions of the lemma. Let us
fix a homeomorphism $\,\psi:\C\mapsto\C\,$ isotopic to the identity
such that $\psi:\Om\mapsto\DC_1(0)\,$ and $\,\psi(\mu)=0\,$. Denote
$\,\tilde A:=\psi(A)\,$. Then $\,\Pi_\Om\,$ coincides with the
spectral projection of $\,\tilde A\,$ corresponding to the open disc
$\,\DC_1(0)\,$.

By \ref{A1.1}, there exists an operator $\,\tilde R_1\,$ acting in
$\,\Pi_\Om H\,$ such that $\,(\tilde A-\tilde R_1)\Pi_\Om\in L\,$
and $\,\si(\tilde R_1)\subset\partial\DC_1(0)\,$. Let $\,\tilde
A_1:=\tilde A(I-\Pi_\Om)\oplus\tilde R_1\,$ and
$\,R_\Om:=\psi^{-1}(\tilde R_1)\,$. Obviously, the inverse image
$\,R_\Om\,$ satisfies (a$_1$) and (a$_2$), and
$\,A_\Om=\psi^{-1}(\tilde A_1)\,$. Since $\,\psi\,$ is isotopic to
the identity, Lemma \ref{lem:l0-1} implies (a$_3$). Finally, by
\ref{A1.2}, if $\,A\,$ satisfies the condition {\bf(C)} then the
same is true for the operators $\,\tilde A\,$, $\,\tilde A_1\,$ (as
was shown in \ref{A1.1}) and $\,A_\Om\,$. \qed

\subsection{Proof of Lemma \ref{lem:a2}}\label{A2}

It is sufficient to prove the lemma in the case where
$\,\Om=\DC_1(0)\,$ and $\,\ga\cap\Om=(-1,1)\,$. After that, the
general result is obtained by choosing a homeomorphism $\,\psi\,$
isotopic to the identity such that $\,\psi:\Om\mapsto\DC_1(0)\,$ and
$\,\psi:\ga\cap\Om\mapsto(-1,1)\,$ and repeating the same arguments
as in \ref{A1.3}.

Further on we always assume that $\,\Om\,$, $\,\ga\,$ and
$\,\si(A)\,$ are as above and write $\,\Pi_1\,$, $\,R_1\,$ and
$\,A_1\,$ instead of $\,\Pi_\Om\,$, $\,R_\Om\,$ and $\,A_\Om\,$.

\subsubsection{}\label{A2.1}

Suppose first that $\,\si(A)\,$ lies on a simple closed contour
$\,\ga'\,$ homeomorphic to $\,\Sbb\,$.

Let $\,\varphi:\C\mapsto\C\,$ be a homeomorphism isotopic to the
identity such that $\,\varphi:\ga'\mapsto\Sbb\,$ and
$\,\varphi(0)=-1\,$. The operator $\,\varphi(A)\,$ belongs to
$\,L_\uR\,$ because its spectrum lies in $\,\Sbb\,$. The condition
(iii) and Lemma \ref{lem:l0-1} imply that $\,\varphi(A)\in
L_0^{-1}\,$. Therefore, by Lemma \ref{lem:l0-3}, there exist
operators $\,W_n\in L_\uR\,$ such that $\,W_n\to\varphi(A)\,$ as
$\,n\to\infty\,$ and $\,-1\not\in\si(W_n)\,$. Let
$\,B_n:=\varphi^{-1}(W_n)\,$ be their inverse images. Then $\,B_n\,$
belong to $\,L_\nR\,$, $\,\si(B_n)\subset\ga'\setminus\{0\}\,$ for
all $\,n\,$, and $\,B_n\to A\,$ as $\,n\to\infty\,$.

The rest of this subsection is similar to Subsection \ref{A1.1}. Let
us fix continuous nonnegative functions $\,\rho_1\,$ and
$\,\rho_2\,$ of $\,\R_+\,$ such that $\,\rho_1\equiv1\,$ and
$\,\rho_2\equiv0\,$ on the interval $\,[1,\infty)\,$,
$\,\rho_1\equiv0\,$ near the origin, and
$\,\rho_1^2+\rho_2^2\equiv1\,$. Define
$$
\tilde S_n\ :=\ V\rho_1^2(|A|)\;+\;\rho_2(|A|)\,(\re
V_n)\,\rho_2(|A|)
$$
where $\,V\,$ and $\,V_n\,$ are the isometric operators in the polar
representations $\,A=V|A|\,$ and $\,B_n=V_n|B_n|\,$. Note that
\begin{equation}\label{re-Vn}
\rho_2(|B_n|)\,(\re V_n)\,\rho_2(|B_n|)\ =\ (\re
V_n)\,\rho_2^2(|B_n|)\ =\ V_n\,\rho_2^2(|B_n|)
\end{equation}
because $\,\si(B_n)\cap\DC_1(0)\subset(-1,1)\,$ and
$\,\rho_2\equiv0\,$ outside the interval $\,[0,1)\,$.

We have
$$
\tilde S_n-V_n\ =\ (V-V_n)\,\rho_1^2(|A|)+\left(\rho_2(|A|)\,(\re
V_n)\,\rho_2(|A|)-V_n\,\rho_2^2(|A|)\right)\,.
$$
Since $\,\rho_1\equiv0\,$ in a neighbourhood of the origin, the
first term in the right hand side converges to zero. Since
$\,\rho_2(|B_n|)\to\rho_2(|A|)\,$, the identity \eqref{re-Vn}
implies that the second term also converges to zero. Thus
$\,\|\tilde S_n-V_n\|\to0\,$ as $\,n\to\infty\,$ and, consequently,
$\,\tilde S_n\in L^{-1}\,$ for all sufficiently large $\,n\,$.

Let us fix $\,n\,$ such that $\,\tilde S_n\in L^{-1}\,$ and consider
the polar decomposition $\,\tilde S_n=\tilde U_n\,|\tilde S_n|\,$.
The unitary operator $\,\tilde U_n\,$ has the same block structure
as $\,U_n\,$ in the proof of Lemma \ref{lem:a1} but now, in
addition, its restrictions to the subspace $\,\Pi_1 H\,$ is
self-adjoint. Let $\,R_1=\left.\tilde U_n\right|_{\Pi_1H}\,$. Then
$\,R_1\,$ satisfies (a$_1$) and its spectrum can contain only the
points $\,\pm1\,$, so we have (a$'_2$) instead of (a$_2$). By Remark
\ref{rem:l0-1}, $\,A_1\,$ satisfies the condition {\bf(C)}, which
implies (a$_3$) and (a$_4$).

\subsubsection{}\label{A2.2}

 Suppose now that $\,\si(A)\setminus(-1,1)\,$
is an arbitrary subset of $\,\C\setminus\DC_1(0)\,$.

In the process of proof we shall introduce auxiliary operators
$\,A^{(1)}\,$ and $\,A^{(2)}\,$ lying in $\,L_\nR\,$, such that
\begin{enumerate}
\item[($\star$)] the spectral projection of $\,A^{(j)}\,$ corresponding
to $\,\DC_1(0)\,$ coincides with $\,\Pi_1\,$, and
$\,A^{(j)}\,\Pi_1=A\,\Pi_1\,$.
\end{enumerate}
Every next operator will have a simpler spectrum, and $\,R_1\,$ will
be defined in terms of $\,A^{(2)}$.

Let us consider the homotopy $\,\psi_t:\C\mapsto\C\,$ defined by
$$
\psi_t(z)\ =\
\begin{cases}
z\ &\text{if}\ z\in\DC_1(0)\,,\\
(1-t)z+t\,\frac{z}{|z|}\ &\text{if}\ z\not\in\DC_1(0)\,,
\end{cases}\qquad\text{where}\ \,t\in[0,1]\,,
$$
and let $\,A^{(1)}:=\psi_1(A)\,$. Since $\,\psi_1(z)=z\,$ for all
$\,z\in\DC_1(0)\,$ and $\,\psi_1:\C\setminus\DC_1(0)\mapsto\Sbb\,$,
the operator $\,A^{(1)}\,$ satisfies the condition ($\star$) and
$\,\si(A^{(1)})\subset(-1,1)\cup\Sbb\,$. In view of Lemma
\ref{lem:l0-1}, $\,A^{(1)}\,$ also satisfies (iii). Denote by
$\,\tilde\Pi\,$ the spectral projection of $\,A^{(1)}\,$
corresponding to the open lower semicircle $\,\Sbb_-:=\{z\in\Sbb:\im
z<0\}\,$.

Now let us consider the homotopy
$\,\varphi_t:\overline{\DC_1(0)}\mapsto\overline{\DC_1(0)}\,$
defined by
$$
\varphi_t\ =\
\begin{cases}
z-it\,\im z+it\,\sqrt{1-(\re z)^2}\ &\text{if}\ \im z\geq0\,,\\
z+it\,\sqrt{1-(\re z)^2}\ &\text{if}\ \im z\leq0\,,
\end{cases}\qquad\text{where}\ t\in[0,1]\,,
$$
and let $\,\tilde A:=\varphi_1(A^{(1)})$. Since
$\,\varphi_1:\Sbb_-\mapsto(-1,1)\,$,
$\,\varphi_1:(-1,1)\mapsto\Sbb_+\,$ and
$\,\varphi_1:\Sbb_+\mapsto\Sbb_+\,$, the spectrum $\,\si(\tilde
A)\,$ lies on the contour $\,\ga'\,$ formed by the interval
$\,[-1,1]\,$ and the upper semicircle $\,\Sbb_+:=\{z\in\Sbb:\im
z>0\}\,$. By Lemma \ref{lem:l0-1}, the operator $\,\tilde A\,$
satisfies (iii).

Note that $\,\left.\varphi_1\right|_{\Sbb_-}\,$ is a homeomorphism
between $\,\Sbb_-\,$ and $\,(-1,1)\,$. Therefore, the spectral
projection of $\,\tilde A\,$ corresponding to the interval
$\,(-1,1)\,$ coincides with $\,\tilde\Pi\,$. Applying \ref{A2.1} to
the operator $\,\tilde A\,$, we can find a self-adjoint operator
$\,\tilde R\,$ acting in the subspace $\,\tilde\Pi H\,$ such that
$\,(\tilde A-\tilde R)\,\tilde \Pi\in L\,$, $\,\si(\tilde
R)\subset\{-1,1\}\,$ and $\,\tilde A\,(I-\tilde\Pi)\oplus\tilde R\,$
satisfies the condition (iii). Since the restriction of $\,\tilde
A\,$ to $\,\tilde\Pi H\,$ is self-adjoint, we have $\,(\tilde
A-\tilde R)\,\tilde \Pi\in L_\sR\,$.

Let $\,A^{(2)}:=A^{(1)}-\tilde\varphi_1^{-1}((\tilde A-\tilde
R)\,\tilde \Pi)\,$, where
$\,\tilde\varphi_1^{-1}:(-1,1)\mapsto\Sbb_-\,$ is the inverse
mapping $\,\left(\left.\varphi_1\right|_{\Sbb_-}\right)^{-1}\,$.
Then $\,A^{(2)}=A^{(1)}(I-\tilde\Pi)\oplus\tilde R\,$. This implies
that $\,\si(A^{(2)})\subset\ga'\,$ and $\,A^{(2)}\,$ satisfies the
condition ($\star$). Moreover, $\,A^{(2)}\,$ satisfies (iii) because
$$
[0,1]\ni t\ \mapsto\ A^{(1)}-t\tilde\varphi_1^{-1}((\tilde A-\tilde
R)\,\tilde \Pi)-\mu I
$$
is a path in $\,L^{-1}\,$ from $\,A^{(1)}-\mu I\,$ to $\,A^{(2)}-\mu
I\,$ for each $\,\mu\,$ lying in the open domain bounded by
$\,\ga'\,$.

Finally, applying \ref{A2.1} to $\,A^{(2)}\,$, we obtain an operator
$\,R_1\,$ acting in the subspace $\,\Pi_1H\,$ such that
$\,(A^{(2)}-R_1)\,\Pi_1=(A-R_1)\,\Pi_1\in L_\sR\,$ and
$\,\si(R_1)\subset\{-1,1\}\,$. The latter inclusion and (ii) imply
that $\,\si((A-tA+tR_1)\,\Pi_1)\subset[-1,1]\,$ for all
$\,t\in[0,1]\,$. Thus we have
$$
A-t(A-R_1)\,\Pi_1-\mu I\ \in\
L^{-1}\,,\qquad\forall\mu\in\DC_1(0)\setminus(-1,1)\,,\quad\forall
t\in[0,1]\,.
$$
Since the operator $\,A\,$ satisfies (iii), it follows that
$\,A_1-\mu I\in L_0^{-1}\,$ for all $\,\mu\in\DC_1(0)\,$, where
$\,A_1=A(I-\Pi_1)\oplus R_1\,$. Now (a$_3$) and (a$_4$) are proved
in the same way as in Lemma \ref{lem:a1}. \qed

\subsection{Proof of Theorem \ref{thm:partition}}\label{A3}
Every open set $\,\Om_j\,$ coincides with the union of a collection
of open discs. Since the spectrum $\,\si(A)\,$ is compact, it is
sufficient to prove the theorem assuming that $\,\Om_j\,$ is the
union of a finite collection of open disks $\,D_{j,k}\,$. If there
exist mutually orthogonal projections $\,P_{j,k}\,$ such that
$\,\sum_{j,k}P_{j,k}=I\,$ and $\,P_{j,k}H\subset\Pi_{D_{j,k}}H\,$
then we can take $\,P_j:=\sum_kP_{j,k}\,$. Thus we only need to
prove the theorem for open discs $\,\Om_j\,$. In the rest of the
proof we shall be assuming that $\,\Om_j=\DC_{r_j}(z_j)\,$.

The proof is by induction on $\,m\,$. If $\,m=1\,$ then the result
is obvious. Suppose that the theorem holds for $\,m-1\,$ and
consider a family of $\,m\,$ open discs $\,\{\Om_j\}_{j=1}^m\,$
covering $\,\si(A)\,$. If $\,\Om_k\subset\bigcup_{j\ne k}\Om_j\,$
for some $\,k\,$ then we can take $\,P_k=0\,$ and apply the
induction assumption. Further on we shall be assuming that
$\,\Om_k\not\subset\bigcup_{j\ne k}\Om_j\,$ for all
$\,k=1,\ldots,m\,$.

\subsubsection{}\label{A3.1}

If $\,r>t>0\,$, let
$$
\DC_r:=\DC_r(z_m)\quad\text{and}\quad \DC_{t,r}:=
\{z\in\C\,:\,t<|z-z_m|<r\}\,.
$$
Note that
\begin{equation}\label{a3}
\si(A)\setminus\DC_t\ \subset\ \bigcup_{j=1}^{m-1}\,\Om_j
\end{equation}
whenever $\,r_m-t\,$ is small enough. Indeed, if this were not true
then there would exist a sequence of points
$\,\mu_n\in\si(A)\setminus\left(\bigcup_{j=1}^{m-1}\,\Om_j\right)\,$
converging to $\,\partial\Om_m\,$, and the limit point would not
belong to $\,\bigcup_{j=1}^m\Om_j\,$.

In the rest of the proof $\,t\in(0,r_m)\,$ is assumed to be so close
to $\,r_m\,$ that \eqref{a3} holds.

\subsubsection{}\label{A3.2}

In this subsection we are going to construct auxiliary operators
$\,A^{(i)}\in L_\nR\,$ satisfying {\bf(C)} and the following
condition
\begin{enumerate}
\item[($\star\star$)]
$\,\Pi^{(i)}_{\Om_j}H\subset\Pi_{\Om_j}H\,$ for all
$\,j=1,\ldots,m\,$, where $\,\Pi^{(i)}_{\Om_j}\,$ are the spectral
projections of $\,A^{(i)}\,$ corresponding to $\,\Om_j\,$.
\end{enumerate}

Assume first that $\,\partial\Om_m\cap\Om_{j_k}\ne\varnothing\,$ for
some indices $\,j_k\leq m-1\,$. Let us fix an arbitrary
$\,r\in(t,r_m)\,$ and consider the open annulus $\,\DC_{t,r}\,$. The
circles $\,\partial\Om_{j_k}\,$ split $\,\DC_{t,r}\,$ into a finite
collection of connected disjoint open sets $\,\La_\al\,$ such that
$\,\overline{\DC_{t,r}}=\bigcup_\al\overline{\La_\alpha}\,$. Each
set $\,\La_\al\,$ is a circular polygon whose edges are arcs of the
circles $\,\partial\Om_{j_k}\,$, $\,\partial\DC_t\,$ and
$\,\partial\DC_r\,$. Since $\,\Om_k\not\subset\DC_{t,r}\,$ for all
$\,k=1,\ldots,m\,$, the boundaries $\,\partial\La_\al\,$ are
connected and, consequently, each polygon $\,\La_\al\,$ is
homeomorphic to a disc.

Let us remove from $\,\si(A)\,$ the open sets $\,\La_\al\,$,
repeatedly applying Lemma \ref{lem:a1}. Then we obtain an operator
$\,A^{(1)}\in L_\nR\,$ satisfying the condition {\bf(C)}, such that
$$
\si(A^{(1)})\,\subset\,\overline{\DC_t}\cup\left(\cup_\al\,\partial\La_\al\right)
\cup\left(\C\setminus\DC_r\right)\quad\text{and}\quad
\si(A^{(1)})\setminus\overline{\DC_r}=\si(A)\setminus\overline{\DC_r}\,.
$$
Note that $\,\La_\al\subset\Om_j\,$ whenever
$\,\partial\La_\al\cap\Om_j\ne\varnothing\,$. In view of (a$_2$),
this implies that the removal of $\,\La_\al\,$ from the spectrum can
only reduce the eigenspace corresponding to $\,\Om_j\,$. Therefore
$\,A^{(1)}\,$ satisfies the condition ($\star\star$).

Now, repeatedly applying Lemma \ref{lem:a2}, let us remove from
$\,\si(A^{(1)})\cap\DC_{t,r}\,$ the interiors of all edges of the
polygons $\,\partial\La_\al\,$ lying in the open annulus
$\,\DC_{t,r}\,$. Then we obtain an operator $\,A^{(2)}\in L_\nR\,$
satisfying the condition {\bf(C)}, such that
\begin{equation}\label{a4}
\si(A^{(2)})\,\subset\,\overline{\DC_t}\cup\Si
\cup\partial\DC_r\cup\left(\si(A)\setminus\overline{\DC_r}\right)\quad\text{and}\quad
\si(A^{(2)})\setminus\overline{\DC_r}=\si(A)\setminus\overline{\DC_r}\,,
\end{equation}
where $\,\Si\,$ is the set of vertices of the polygons
$\,\La_\al\,$. If at least one point of a closed edge of
$\,\La_\al\,$ belongs to $\,\Om_j\,$, then the interior part of this
edge also lies in $\,\Om_j\,$. In view of (a$'_2$), this implies
that the removal of open arcs does not increase the eigenspaces
corresponding to $\,\Om_j\,$. Therefore $\,A^{(2)}\,$ satisfies
($\star\star$).

By \eqref{a4}, the set of points
$\,z\in\si(A^{(2)})\setminus\DC_r\,$ which do not belong to
$\,\si(A)\setminus\DC_r\,$ consists of a countable collection of
arcs $\,\ga_\beta\,$ of the circle $\,\partial\DC_r\,$, whose end
points belong either to $\,\Sigma\bigcap\partial\DC_r\,$ or to
$\,\si(A)\bigcap\partial\DC_r\,$. Each interior point of
$\,\ga_\beta\,$ is separated from
$\,\si(A^{(2)})\setminus\overline{\DC_r}\,$ (otherwise it would
belong to $\,\si(A)\,$). The set $\,\Sigma\,$ is finite and, by
\eqref{a3}, the intersection $\,\si(A)\bigcap\partial\DC_r\,$ is a
subset of $\,\bigcup_{j=1}^{m-1}\Om_j\,$. This implies that
$\,\left(\si(A^{(2)})\bigcap\partial\DC_r\right)
\setminus\left(\bigcup_{j=1}^{m-1}\Om_j\right)\,$ is covered by a
finite subcollection of arcs $\,\ga_{\beta'}\,$ whose end points
belong to $\,\Sigma\bigcup\left(\bigcup_{j=1}^{m-1}\Om_j\right)\,$.
Repeatedly applying Lemma \ref{lem:a2}, let us remove the interior
parts of the arcs $\,\ga_{\beta'}\,$ from $\,\si(A^{(2)})\,$. Then
we obtain an operator $\,A^{(3)}\in L_\nR\,$ satisfying the
condition {\bf(C)}, such that
\begin{equation}\label{a5}
\si(A^{(3)})\,\subset\,\overline{\DC_t}\cup\Si
\cup\left(\cup_{j=1}^{m-1}\,\Om_j\setminus\DC_r\right)\,.
\end{equation}
For the same reason as before, $\,A^{(3)}\,$ also satisfies the
condition ($\star\star$).

If $\,\partial\Om_m\cap\Om_j=\varnothing\,$ for all
$\,j=1,\ldots,m-1\,$ then $\,\si(A)\,$ is separated from the
boundary $\,\partial\Om_m\,$, and we define $\,A^{(3)}=A\,$.
Obviously, in this case $\,A^{(3)}\,$ also satisfies ($\star\star$)
and \eqref{a5} with $\,\Si=\varnothing\,$ and some $\,t\in(0,r_m)\,$
and $\,r\in(t,r_m)\,$.

\subsubsection{}\label{A3.3}

Let $\,P_m\,$ be the spectral projection of the operator
$\,A^{(3)}\,$ corresponding to the set
$\,\overline{\DC_t}\cup\Si\,$. Since $\,t<r\,$ and $\,\Si\,$ is
finite, this set is separated from $\,\si(A^{(3)})\setminus\DC_r\,$
and, consequently, $\,P_m\in L\,$. Since
$\,\overline{\DC_t}\cup\Si\subset\Om_m\,$, the condition
($\star\star$) implies that $\,P_mH\subset\Pi_{\Om_m}H\,$.

Given $\,z\in\C\,$, let us consider the operator
$$
A_z\ :=\ z\,P_m+(I-P_m)\,A^{(3)}\,.
$$
From \eqref{a5} it follows that
\begin{equation}\label{a6}
\si(A_z)\ \subset\ \{z\}
\cup\left(\cup_{j=1}^{m-1}\,\Om_j\setminus\DC_r\right),\qquad\forall
z\in\C\,.
\end{equation}

If $\,z\in\DC_t\,$ then for each sufficiently small $\,\de>0\,$
there is a homeomorphism $\,\varphi_{z,\de}:\C\mapsto\C\,$ isotopic
to the identity, which maps a neighbourhood of
$\,\overline{\DC_t}\cup\Si\,$ onto $\,\DC_\de(z)\,$ and coincides
with the identity on a neighbourhood of
$\,\si(A^{(3)})\setminus\DC_r\,$. By \ref{A1.2}, all the operators
$\,\varphi_{z,\de}(A^{(3)})\,$ satisfy the condition {\bf(C)}. Since
$\,\varphi_{z,\de}(A^{(3)})\to A_z\,$ as $\de\to0\,$, this implies
that $\,A_z\,$ also satisfy the condition {\bf(C)} for all
$\,z\in\DC_t\,$.

If $\,z\not\in\DC_t\,$ and $\,\la\ne z\,$, let us fix a point
$\,\tilde z\in\DC_t\,$ and a path $\,\mu(s)\,$ from $\,\tilde z\,$
to $\,z\,$ which does not go through $\,\la\,$. Assume that
$\,\eps>0\,$ is so small that $\,\tilde z\not\in\DC_\eps(\la)\,$.
Then, applying Lemma \ref{a1} with $\,\Om=\DC_\eps(\la)\,$ to
$\,A_{\tilde z}\,$, we can find an operator $\,A_{\tilde
z,\eps}:=A_{\tilde z,\Omega}\in L_\nR\,$ such that $\,P_mA_{\tilde
z,\eps}=A_{\tilde z,\eps}P_m=\tilde zP_m\,$, $\,A_{\tilde
z,\eps}-\la I\in L_0^{-1}\,$ and $\,\lim_{\eps\to0}A_{\tilde
z,\eps}=A_{\tilde z}\,$. Since $\,\mu(s)P_m+A_{\tilde
z,\eps}(I-P_m)-\la I\,$ is a path in $\,L^{-1}\,$ from $\,A_{\tilde
z,\eps}-\la I\,$ to $\,zP_m+A_{\tilde z,\eps}(I-P_m)-\la I\,$, the
latter operator also belongs to $\,L_0^{-1}\,$. Therefore
$$
A_z-\la I\ =\ \lim_{\eps\to0}(zP_m+A_{\tilde z,\eps}(I-P_m)-\la I)\
\in\ \overline{L_0^{-1}}\,,\qquad\forall\la\ne z\,.
$$
Obviously, the same inclusion holds for $\,\la=z\,$. Thus the
operators $\,A_z\,$ satisfy the condition {\bf(C)} for all
$\,z\in\C\,$.

\subsubsection{}\label{A3.4}

Let us fix an arbitrary point
$\,z'\in\Om_1\setminus\left(\bigcup_{j=2}^m\Om_j\right)\,$ and
denote $\,A':=A_{z'}\,$. In view of \eqref{a6}, we have
$\,\si(A')\subset\bigcup_{j=1}^{m-1}\Om_j\,$. Applying the induction
assumption to the operator $\,A'\,$, we can find mutually orthogonal
projections $\,P'_1,P_2,\ldots,P_{m-1}\,$ such that
$\,P'_1+\sum_{j=2}^{m-1}P_j=I\,$, $\,P'_1H\subset\Pi'_{\Om_1}H\,$
and $\,P_jH\subset\Pi'_{\Om_j}H\,$ for all $\,j=2,\ldots,m-1\,$,
where $\,\Pi'_{\Om_j}\,$ are the spectral projections of $\,A'\,$
corresponding to $\,\Om_j\,$. Since
$\,z'\not\in\bigcup_{j=2}^{m-1}\Om_j\,$, the projections
$\,\Pi'_{\Om_2},\ldots,\Pi'_{\Om_{m-1}}\,$ coincide with the
spectral projections of the truncation
$\,\left.A^{(3)}\right|_{(I-P_m)H}\,$. Thus we have
$\,P_m\Pi'_{\Om_j}=0\,$ for all $\,j=2,\ldots,m-1\,$ and,
consequently, $\,P_mH\subset P'_1H\,$.

Let $\,P_1:=P'_1-P_m\,$. Then, by the above, $\,P_1,\ldots,P_m\,$
are mutually orthogonal projections such that
$\,\sum_{j=1}^mP_j=I\,$. It remains to notice that, in view of
($\star\star$),
$$
(P'_1-P_m)\,H\ \subset\ \Pi^{(3)}_{\Om_1}H\ \subset\ \Pi_{\Om_1}H
$$
and $\,\Pi'_{\Om_j}H\subset\Pi^{(3)}_{\Om_j}H\subset\Pi_{\Om_j}H\,$
for all $\,j=2,\ldots,m-1\,$.
\qed

\section{Remarks and references}\label{B}

\subsection{}\label{B1}
One can easily show that $\,L_\fR\cap
L_\sR\subset\overline{L^{-1}\cap L_\sR}\,$ and $\,L_\fR\cap
L_\nR\subset\overline{L_0^{-1}\cap L_\nR}\,$ in any $\,C^*$-algebra
$\,L\,$ (see the proof of Corollaries \ref{cor:FR1} and
\ref{cor:FR2}). If $\,L\,$ has real rank zero then
\begin{enumerate}
\item[(1)]
$\,L_\sR=\overline{L_\fR\cap L_\sR}\,$ (this is the implication
$\,(1)\Rightarrow(3)\,$ in Corollary \ref{cor:FR1}) and
\item[(2)]
$\,L_0^{-1}\cap L_\uR=\overline{L_\fR\cap L_\uR}\,$.
\end{enumerate}
The first result is well known and elementary (see, for example,
\cite[Theorem V.7.3]{D} or \cite[Theorem 2.6]{BP}). The second is
due to Huaxin Lin \cite{L1}. Note that (2) is an immediate
consequence of (1) and Lemma \ref{lem:l0-3}.

Using Lemma \ref{lem:partition}, one can deduce from Theorem
\ref{thm:partition} ``quantitative'' versions of (1) and (2), where
the distance to an approximating operator with a finite spectrum
$\,\si\,$ is estimated in terms of $\,\si\,$.

\subsection{}\label{B2}
Theorem \ref{thm:partition} remains valid for self-adjoint operators
$\,A\,$ in a general $\,C^*$-algebra $\,L\,$ satisfying the
condition
\begin{enumerate}
\item[{\bf(C$_\sR$)}]
$\,A-\la I\in\overline{L^{-1}\cap L_\sR}\,$ for all $\,\la\in\R\,$.
\end{enumerate}
Indeed, if we take $\,B_n\in L^{-1}\cap L_\sR\,$ in Subsection
\ref{A1.1} then the operators $\,V_n\,$, $\,S_n\,$ and $\,U_n\,$ are
self-adjoint, and so is the operator $\,A_\eps\,$. The same
arguments show that $\,A_\eps\,$ still satisfies the condition
{\bf(C$_\sR$)}. Therefore, iterating this procedure, we can remove
from $\,\si(A)\,$ an arbitrary finite collection of open intervals
without changing the spectral projections corresponding to the
complements of their closures. This allows us to construct
approximate spectral projections in the same manner as in Subsection
\ref{A3}, with obvious simplifications due to the fact that
$\,\si(A)\subset\R\,$.

Using this observation, one can refine Corollary \ref{cor:FR1} as
follows.

\subsection{}\label{B3} {\it In an arbitrary $\,C^*$-algebra $\,L\,$, the
following statements about a self-adjoint operator $\,A\in L_\sR\,$
are equivalent.
\begin{enumerate}
\item[(1)]
The operator $\,A\,$ satisfies the condition {\bf(C$_\sR$)}.
\item[(2)]
The operator $\,A\,$ has approximate spectral projections in the
sense of Theorem {\rm\ref{thm:partition}}, associated with any
finite open cover of its spectrum.
\item[(3)] $\,A\in\overline{L_\fR\cap L_\sR}\,$.
\end{enumerate}}

As explained in Subsection \ref{B2}, (2) follows from (1),
and the other two implications are proved in the same way as in
Subsection \ref{S2.1}.

\subsection{}\label{B4}
It is clear from the proof that Lemma \ref{lem:a1} remains valid if
we replace $\,L_0^{-1}\,$ with $\,L^{-1}\,$. However, in Lemma
\ref{lem:a2} the assumption (iii) is of crucial importance.

\subsection{}\label{B5}
For a disc $\,\Om=\DC_\eps(0)\,$, Lemma \ref{lem:a1}  without the
condition (a$_4$) can easily be deduced from \cite[Theorem 5]{P}
(see also \cite[Theorem 2.2]{R}). In the both papers the theorem was
proved for $\,A\in\overline{L^{-1}}\,$, but in \cite[Section 3]{FR2}
the authors explained that the approximating operator belongs
$\,\overline{L_0^{-1}}\,$ whenever $\,A\in\overline{L_0^{-1}}\,$.

\cite[Theorem 5]{P} holds if $\,\dist(A,L^{-1})<\eps\,$, whereas we
assumed that $\,\dist(A,L_0^{-1})=0\,$ and, in addition, that
$\,A\,$ is normal. Our proof slightly differs from those in
\cite{P}, \cite{R} and \cite{FR2}. It gives a weaker result in the
general case but is better suited for the study of operators with
one dimensional spectra. It also shows that one can choose
approximating operators satisfying the condition {\bf(C)}.

\subsection{}\label{B6}
Lemma \ref{lem:a2} seems to be new. Possibly, one could deduce the
$\,(1)\Rightarrow(3)\,$ part of Corollary \ref{cor:FR2} from
\cite[Theorem 5.4]{L2}, but our approach gives more information
about the approximating operators. In particular, Theorem
\ref{thm:partition} implies a quantitative (in the same sense as in
Subsection \ref{B1}) version of \cite[Theorem 5.4]{L2}.

\subsection{}\label{B7}
One can further refine Theorem \ref{thm:main} by introducing subsets
$\,M_T^n\subset M_T\,$, which consist of convex combinations of
operators of the form $\,S_1TS_2\,$ containing at most $\,n\,$
terms. The same proof shows that $\,M_{[A^*,A]}\,$ in \eqref{main-1}
can be replaced with $\,M_{[A^*,A]}^{n(\eps)}\,$ where $\,n(\eps)\,$
is an integer-valued nonincreasing function of
$\,\eps\in(0,\infty)\,$.

\subsection{}\label{B8}
A review of results on almost commuting operators and matrices can
be found in \cite{DS}. The authors listed several
dimension-dependent results and discussed the following known
example.

Let $\,A_m\,$ and $\,B_m\,$ be $\,(m+1)\times(m+1)$-matrices defined
by the identities
\begin{enumerate}
\item[]
$\,A_me_j=\left(1-\frac{2j}m\right)e_j\,$ for all
$\,j=0,\ldots,m\,$,
\item[]
$\,B_me_j=\frac2{m+1}\,\sqrt{(j+1)(m-j)}\;e_{j+1}\,$ for all
$\,j=0,\ldots,m-1\,$, and $\,B_me_m=0\,$,
\end{enumerate}
where $\,\{e_0,e_1,\ldots,e_m\}\,$ is an orthonormal basis in
$\,\C^{m+1}\,$. Then $\,\|A_m\|=1\,$, $\,\|B_m\|\leq1\,$,
$\,A_m=A_m^*\,$, $\,\|[B_m^*,B_m]\|\leq\frac4m\,$ and
$\,\|[A_m,B_m]\|\leq\frac2m\,$, so that the Hermitian matrices
$\,A_m\,$, $\,\re B_m\,$ and $\,\im B_m\,$ are almost commuting for
large values of $\,m\,$. However, the distance between the pair
$\,\{A_m,B_m\}\,$ and any pair of commuting
$\,(m+1)\times(m+1)$-matrices is estimated from below by a constant
independent of $\,m\,$ \cite{Ch} (see also \cite{V}).

This example shows that, without additional assumptions,
$\,\BR(\eps)\,$ in \eqref{main-1} cannot be replaced by
$\,\BR(\eps)\cap L_\sR\,$ (or, in other words, it is not sufficient
to adjust only one operator in a pair of almost commuting
self-adjoint operators to obtain a pair of commuting self-adjoint
operators). Indeed, if \eqref{main-1} held with $\,\BR(\eps)\cap
L_\sR\,$ then, applying Theorem \ref{thm:main} to the matrices
$\,\re B_m+iA_m\,$ and $\,\im B_m+iA_m\,$, we could find Hermitian
$\,(m+1)\times(m+1)$-matrices $\,X_m\,$ and $\,Y_m\,$ such that
$\,[A_m,X_m]=[A_m,Y_m]=0\,$ and $\,\|X_m+iY_m-B_m\|\to0\,$ as
$\,m\to\infty\,$.

\subsection{}\label{B9}
Theorem \ref{thm:main} allows one to obtain approximation results
for operators $\,A\in\BC(H)\,$ with compact self-commutators. For
instance, if $\,A\in\BC(H)\,$ satisfies the condition {\bf(C)},
$\,\|A\|\leq1\,$, $\,[A^*,A]\in\SC_p\,$ and $\,\|[A^*,A]\|_p\leq
c\,$ then the number of eigenvalues of each operator from
$\,h(\eps)M_{[A^*,A]}\,$ lying outside the interval
$\,(-\eps,\eps)\,$ does not exceed
$\,\left(c\,\eps^{-1}h(\eps)\right)^p\,$. In view of \eqref{main-1},
this implies that for each $\,\eps>0\,$ there exist a normal
operator $\,T_\eps\,$ and a self-adjoint operator $\,R_\eps\,$ of
finite rank such that $\,\|A-T_\eps-R_\eps\|\leq2\eps\,$ and
$\,\rank R_\eps\leq\left(c\,\eps^{-1}h(\eps)\right)^p\,$. Moreover,
if the operator $\,A\,$ is compact then one can take
$\,T_\eps\in\CC(H)\,$.

Since Theorem \ref{thm:main} does not give an explicit estimate for
$\,h(\eps)\,$, the above observation is of limited interest.
However, it shows that $\,\rank R_\eps\,$ is bounded by a constant
depending only on $\,\eps\,$ and $\,p\,$.

\subsection{}\label{B10}
If $\,A\in B(H)\,$ and $\,\eps>0\,$, let us define
$$
\Spec_\eps(A)\ :=\ \si(A)\bigcup\left\{z\in\C\,:\,\|(A-zI)^{-1}\|>\eps^{-1}\right\}.
$$
The set $\,\Spec_\eps(A)\,$ is called the {\it
$\,\eps$-pseudospectrum} of $\,A\,$. It is known that
$$
\Spec_\eps(A)=\bigcup_{\|R\|<\eps}\si(A+R)\quad\text{and}\quad
\bigcap_{\de>\eps}\Spec_\de(A)=\overline{\Spec_\eps(A)}
$$
(see, for instance, \cite[Theorem 9.2.13]{Da}) and \cite[Lemma
2]{CCH}). Let
$$
d_A(\eps)\,:=
\sup_{\la\in\Spec_\eps(A)}\dist\left(\la,\si_\ess(A)\right)
\quad\text{and}\quad
d_A(0)\,:=\sup_{\la\in\si(A)}\dist\left(\la,\si_\ess(A)\right)\,,
$$
where $\,\si_\ess(A)\,$ is the spectrum of the corresponding element of the Calkin algebra.

In \cite{BD} the authors proved the following statement.
If $\,\|[A^*,A]\|\leq c^2\,$ and
$$
\|(A-\la I)^{-1}\|\ \leq\ \left(\dist\left(\la,\si_\ess(A)\right)-c\right)^{-1},
\qquad\forall\la:\dist\left(\la,\si_\ess(A)\right)>c\,,
$$
then the normal operator $\,T\,$ in the BDF theorem can be chosen in
such a way that $\,\si(T)=\si_\ess(A)\,$ and  $\,\|A-T\|\leq
f(c)\,$, where $\,f:[0,\infty)\to[0,\infty)\,$ is some (unknown)
continuous function vanishing at the origin that depends only on
$\,\si_\ess(A)\,$.

Note that, under the above condition on $\,(A-\la I)^{-1}\,$, we
have $\,d_A(\eps)\leq c+\eps\,$ for all $\,\eps>0\,$. Theorem
\ref{thm:bdf1}(1) implies the following more precise result which
holds without any a priori assumptions about the resolvent.

\subsection{}\label{B11}
{\it Under the conditions of the BDF theorem, there exists a normal
operator $\,T\,$ such that $\,\si(T)=\si_\ess(A)\,$,
$\,A-T\in\CC(H)\,$ and
$$
\|A-T\|\ \leq\ 2\,\|A\|\,F\bigl(\|A\|^{-2}\|[A^*,A]\|\bigr)\ +\
d_A\left(2\,\|A\|\,F\bigl(\|A\|^{-2}\|[A^*,A]\|\bigr)\right)\,,
$$
where $\,F:[0,\infty)\mapsto[0,1]\,$ is a nondecreasing concave
function vanishing at the origin, which does not depend on $\,A\,$.}

Indeed, applying Theorem \ref{thm:bdf1}(1) to the operator
$\,\|A\|^{-1}A\,$, we can find a normal operator $\,T'\,$ such that
$\,A-T'\in\CC(H)\,$ and $\,\|A-T'\|\leq
2\,\|A\|\,F\bigl(\|A\|^{-2}\|[A^*,A]\|\bigr)\,$. By the above,
$\,\si(T')\subset\Spec_\de(A)\,$ for all $\,\de>\|A-T'\|\,$ and,
consequently,
$$
\dist\left(\la,\si_\ess(T')\right)\ =\
\dist\left(\la,\si_\ess(A)\right)\ \leq\
d_A\bigl(\|A-T'\|\bigr)\,,\qquad\forall\la\in\si(T')\,.
$$
Now, using the spectral theorem, one can easily construct a normal
operator $\,T\,$ such that $\,T-T'\in\CC(H)\,$,
$\,\si(T)=\si_\ess(T')\,$ and $\,\|T-T'\|\leq
d_A\bigl(\|A-T'\|\bigr)\,$. This operator satisfies the required
conditions.

\subsection{}\label{B12}

Theorem \ref{thm:main} states that \eqref{main-1} holds for all
$\,C^*$-algebras $\,L\,$ of real rank zero with some universal
function $\,h\,$. The function $\,F\,$ is determined only by $\,h\,$
and, therefore, \eqref{main-5} is true for all $\,C^*$-algebras
$\,L\,$ of real rank zero and all seminorms satisfying the
conditions \eqref{main-4}. Our proof is by contradiction and does
not give explicit estimates for $\,h\,$ and $\,F\,$.

For a particular $\,C^*$-algebra $\,L\,$ and a seminorm
$\,\|\cdot\|_*\,$ on $\,L\,$, it may be possible to optimize the
choice of functions $\,h\,\,$ and $\,F\,$ or to obtain additional
information about their behaviour. Note that
\begin{enumerate}
\item[(i)]
if \eqref{main-1} holds with some function $\,h\,$ then we also have
\eqref{main-5} with $\,F\,$ defined by \eqref{h0} for all seminorms
$\,\|\cdot\|_*\,$ satisfying \eqref{main-4};
\item[(ii)]
$\,\liminf_{\eps\to0}\left(\eps\,h(\eps)\right)>0\,$ for any
function $\,h\,$ satisfying \eqref{main-1} and
\item[(iii)]
$\,\liminf_{t\to0}\left(t^{-1/2}F(t)\right)>0\,$ for any function
$\,F\,$ satisfying \eqref{main-5}
\end{enumerate}
(otherwise we obtain a contradiction by substituting an operator
$\,\de A\,$ and letting $\,\de\to0\,$).

\subsection{}\label{B13}
In \cite{DS} the authors conjectured that the estimate
\eqref{matrix-norm} holds with a function $\,F\,$ such that
$\,F(t)\sim t^{1/2}\,$ as $\,t\to0\,$. In the recent paper
\cite{Ha}, Hastings proved \eqref{matrix-norm} with
$\,F(t)=t^{1/6}\tilde F(t)\,$, where $\,\tilde F\,$ is a function
growing slower than any power of $\,t\,$ as $\,t\to0\,$.

Since the proof of Theorem \ref{thm:bdf1}(1) uses only
\eqref{matrix-norm}, Hastings' result implies the following
corollary.

\subsection{}\label{B14}
{\it Let $\,A\,$ satisfy the conditions of the BDF theorem, and let
$\,\|A\|\leq1\,$. Then for each $\,\eps,\de>0\,$ there exists a
diagonal operator $\,T_{\eps,\de}\,$ such that
$\,A-T_{\eps,\de}\in\CC(H)\,$ and
$$
\|A-T_{\eps,\de}\|\ \leq\ C_\de\,\|[A^*,A]\|^{1/6-\de}\;+\;\eps\,,
$$
where $\,C_\de\,$ is a constant depending only on $\,\de\,$.

}

\subsection{}\label{B15}
In most statements, for the sake of simplicity, we assumed that
$\,\|A\|\leq1\,$. One can easily get rid of this condition by
applying the corresponding result to the operator $\,\|A\|^{-1}A\,$
(as was done in Subsection \ref{B11}).


\begin{thebibliography}{MMM}

\bibitem[Be]{Be}
I.D. Berg, {\it An Extension of the Weyl-Von Neumann Theorem to
Normal Operators\/}, Trans. Amer. Math. Soc. {\bf 160} (1971),
365--371.

\bibitem[BD]{BD}
I.D Berg and K.R. Davidson, {\it Almost commuting matrices and a
quantitative version of the Brown-Douglas-Fillmore theorem\/}, Acta
Math. {\bf 166} (1991), 121--161.

\bibitem[Bo1]{Bo1}
R. Bouldin, {\it The essential minimum modulus\/}, Indiana Univ.
Math. J. {\bf 30} (1981), 513--517.

\bibitem[Bo2]{Bo2}
R. Bouldin, {\it Closure of invertible operators on a Hilbert
space\/}, Proc. Amer. Math. Soc. {\bf 108} (1990), 721--726.

\bibitem[BDF]{BDF}
L.G. Brown, R.G. Douglas and P.A. Fillmore, {\it Unitary equivalence
modulo the compact operators and extensions of C* algebras\/}, in
`Proceedings of a Conference on Operator Theory', Lecture Notes in
Math. {\bf 345} (1973), 58--128.

\bibitem[BP]{BP}
L.G. Brown and G.K. Pedersen, {\it $C^*$-algebras of real rank
zero\/}, J. Funct. Anal. {\bf 99} (1991), 131--149.

\bibitem[CCH]{CCH}
F. Chaitin-Chatelin and A. Harrabi, {\it About definitions of
pseudospectra of closed operators in Banach spaces\/}, Tech. Rep.
TR/PA/98/08, CERFACS.

\bibitem[Ch]{Ch}
M.D. Choi, {\it Almost commuting matrices need not be nearly
commuting\/}, Proc. Amer. Math. Soc. {\bf 102} (1988), 529--533.

\bibitem[CL]{CL}
L.A. Coburn and A.Lebow, {\it Algebraic theory of Fredholm
operators\/}, Journal of Mathematics and Mechanics (Indiana Univ.
Math. J.) {\bf 15}, No. 4 (1966), 577--584.

\bibitem[D]{D}
K.R. Davidson. {\it C*-algebras by Example\/}. AMS, 1996.

\bibitem[DS]{DS}
K.R. Davidson and S.J. Szarek, {\it Local operator theory, random
matrices and Banach spaces\/}. In: Handbook of the Geometry of
Banach Spaces, Vol. I, 317--366; W.B. Johnson and J. Lindenstrauss,
eds; North-Holland, Amsterdam, 2001.

\bibitem[Da]{Da}
E.B. Davies. {\it Linear Operators and Their Spectra\/}. Cambridge University Press, 2007.

\bibitem[FK]{FK}
J. Feldman and R.V. Kadison, {\it The closure of the regular
operators in a ring of operators\/}, Proc. Amer. Math. Soc. {\bf 5}
(1954), 909--916.

\bibitem[FR1]{FR1}
P. Friis and M. R{\o}rdam, {\it Almost commuting self-adjoint
matrices --- a short proof of Huaxin Lin's theorem\/}, J. Reine
Angew. Math. {\bf 479} (1996), 121--131.

\bibitem[FR2]{FR2}
P. Friis and M. R{\o}rdam, {\it Approximation with normal operators
with finite spectrum, and an elementary proof of a
Brown--Douglas--Fillmore theorem\/}, Pac. J. Math. {\bf 199} (2001),
347--366.

\bibitem[Ha]{Ha}
M.B. Hastings, {\it Making almost commuting matrices commute\/},
Comm. Math. Phys. {\bf 291} (2009), 321-345.

\bibitem[LS]{LS}
A. Laptev and Y. Safarov, {\it Szeg\"o type limit theorems\/}, J.
Funct. Anal., {\bf 138} (1996), 544--559.

\bibitem[L1]{L1}
H. Lin, {\it Exponential rank of C*-algebras with real rank zero and
Brown-Pedersen's conjecture\/}, J. Funct. Anal., {\bf 114} (1993),
1--11.

\bibitem[L2]{L2}
H. Lin, {\it Approximation by normal elements with finite spectra in
$\,C^*$-algebra of real rank zero\/}, Pac. J. Math. {\bf 173}
(1996), 443-489.

\bibitem[L3]{L3}
H. Lin, {\it Almost commuting selfadjoint matrices and
applications}, in `Operator algebras and their applications', Fields
Inst. Commun. {\bf 13} (1997), 193--233.

\bibitem[P]{P}
G.K. Pedersen, {\it Unitary extensions and polar decompositions in a
$\,C^*$-algebra\/}, J. Operator Theory {\bf 17} (1987), 357--364.

\bibitem[R]{R}
M. R{\o}rdam, {\it Advances in the theory of unitary rank and
regular approximation\/}, Ann. of Math. {\bf 128} (1988), 153--172.

\bibitem[RD]{RD}
B. Russo and H.A. Dye, {\it A note on unitary operators in
$\,C^*$-algebras\/}, Duke Math. J. {\bf 33} (1966), 413--416.

\bibitem[Sa]{Sa}
Y. Safarov, {\it Berezin and G\aa rding inequalities\/},
Funktsional. Anal. i Prilozhen. {\bf 39} (2005), No. 4, 69--77
(Russian). English translation in Funct. Anal. Appl. {\bf 39}
(2005), 301--307.

\bibitem[Sz]{Sz}
G. Szeg\"o, {\it Beitr\"age zur theorie der Toeplizschen formen\/},
Math. Z. {\bf 6} (1920), 167--202.

\bibitem[V]{V}
D. Voiculescu, {\it Asymptotically commuting finite rank unitary
operators without commuting approximants\/}, Acta Sci. Math.
(Szeged) {\bf 45} (1983), 429--431.

\end{thebibliography}
\end{document}